\newtheorem{theorem}{Theorem}[section]
\newtheorem{lem}[theorem]{Lemma}
\newtheorem*{theorem*}{Theorem}
\theoremstyle{plain}
\newtheorem{cor}[theorem]{Corollary}
\newtheorem{prop}[theorem]{Proposition}
\newtheorem{conjecture}[theorem]{Conjecture}
\theoremstyle{remark}
\newtheorem{remark}[theorem]{Remark}
\numberwithin{equation}{section}
\begin{document}

\title[fractional Yamabe flow]{Convergence of the fractional Yamabe flow
for arbitrary initial energy}

\author{Jingeon An}

\address{Department of Mathematics and Computer Science, University of Basel, Spiegelgasse 1, 4052 Basel, Switzerland}

\email{jingeon.an@icloud.com}

\author{Hardy Chan}

\address{Department of Mathematics and Computer Science, University of Basel, Spiegelgasse 1, 4052 Basel, Switzerland}

\email{hardy.chan@unibas.ch}

\author{Pak Tung Ho}
\address{Department of Mathematics, Tamkang University, Tamsui, New Taipei City 251301, Taiwan}

\email{paktungho@yahoo.com.hk}

\subjclass[2020]{Primary  35K55; Secondary  35B40}

\begin{abstract}
Since the seminal paper of Graham and Zworski (Invent. Math. 2003), conformal geometric problems are studied in the fractional setting. We consider the convergence of fractional Yamabe flow, which is previously known under small initial energy assumption. Inspired by the deep work of Brendle (J. Diff. Geom. 2005), we obtain the full convergence result for arbitrary initial energy, whenever the (fractional) positive mass conjecture is valid. 
    
\end{abstract}

\maketitle

\section{Introduction}

As one of the landmarks in geometric analysis, the problem of finding a metric in a given conformal class of a closed manifold with constant scalar curvature is known as the Yamabe problem (see \cite{aubin1998some,schoen1984conformal,trudinger1968remarks,yamabe1960deformation}). Later in his seminal paper, Hamilton \cite{hamilton1988ricci} introduced the parabolic version of the Yamabe problem, now called Yamabe flow: For any compact Riemannian manifold $(M,g_0)$ of dimension $n\geq 2$, consider the following evolution equation for the metric $g(t)$ such that
\begin{equation}\label{eq: classical yamabe flow in curvature terms}
    \begin{cases}
        \partial_t g&=-(R^g-s^g)g\\
        g(0)&=g_0,
    \end{cases}
\end{equation}
where $R^g$ denotes the scalar curvature of $g$, and $s^g$ is the averaged scalar curvature, i.e.
\[
    s^g:=\frac{1}{\mu_{g}(M)}\int_M R^g\,d\mu_{g}.
\]
It is possible to rewrite \eqref{eq: classical yamabe flow in curvature terms} as a parabolic PDE in the conformal factor, under the conformal change
\[
    g=u^{\frac{4}{n-2}}g_0.
\]
Then, in terms of the conformal Laplacian $P^{g_0}$, \eqref{eq: classical yamabe flow in curvature terms} is equivalent to a fast diffusion equation
\[
    \frac{n-2}{n+2}\partial_t \left(u^{\frac{n+2}{n-2}}\right)=-P^{g_0}u+s^gu^{\frac{n+2}{n-2}}.
\]
In this formulation, the Yamabe flow resulted in an extensive amount of literature in geometric analysis, see, e.g. \cite{Brendle,brendle2007convergence,chow1992yamabe,schwetlick2003convergence,ye1994global}.

In a seminal paper \cite{graham2003scattering}, Graham and Zworski introduced a one-parameter family of conformal fractional Laplacians $P_\gamma^g$, of order $2\gamma\in (0,n)$, defined on the conformal infinity of a Poincaré--Einstein manifold. Remarkably, these operators coincide with the GJMS operators \cite{graham1992conformally} for integer values of $\gamma$. This also includes the fractional Laplacian as a special case, when the ambient manifold is the hyperbolic space. When $\gamma\in(0,1)$, an extension problem is formulated by Chang and Gonz\'alez \cite{chang2011fractional}, analogous to the one posed by Caffarelli--Silvestre \cite{Caffarelli-Silvestre} in the Euclidean setting.

Correspondingly, one defines an interpolated curvature quantity $R_\gamma^g:=P_\gamma^g(1)$ for each $\gamma\in (0,n/2)$, which is just scalar curvature for $\gamma=1$, and the $Q$-curvature for $\gamma=2$. This interpolated notion of curvature is studied in \cite{chang2011fractional,del2012singular,gonzalez2013fractional,jin2014fractional,qing2006compactness,mayer2021asymptotics,mayer2024fractional}.

Then, one naturally considers the fractional Yamabe flow, namely  \eqref{eq: classical yamabe flow in curvature terms} but equipped with this generalized notion of curvature. 
Recently, the convergence of the fractional Yamabe flow is one of the main interests and studied in \cite{jin2014fractional,daskalopoulos2017weak,ChanSireSun}. As we expect from the classical theory, it is known by Chan, Sire, and Sun \cite{ChanSireSun} that the fractional Yamabe flow converges strongly, but the argument in \cite{ChanSireSun} proves the convergence only under a small energy assumption on the initial metric $g_0$. The aim of this paper is to remove this initial energy assumption and prove the strong convergence for arbitrary initial energy, as in the classical case treated by Brendle \cite{Brendle}.

The main difficulty of generalizing Brendle's strategy \cite{Brendle} to the fractional case is, of course, nonlocality, because of which one cannot derive certain important estimates from the pointwise product rule as in the local case $\gamma=1$. One needs to resort to the manifold in extension where, unfortunately, two versions of Schoen's bubble (i.e. Aubin--Talenti's bubble glued to Green's function) arise. More precisely, delicate estimates are required to compare the ``glue-and-extend'' and ``extend-and-glue'' versions. In this direction, Mayer and Ndiaye \cite{mayer2021asymptotics,mayer2024fractional} recently made an important contribution by deriving asymptotics of Poisson's kernel for fractional conformal Laplacians and used them to prove the existence of the fractional Yamabe problem. Building on their work, we prove a novel and crucial pointwise gradient estimate (in order to obtain \eqref{eq: L 2n / n+2gamma estimate on bubble}). Collecting all the ingredients leads to the final resolution of the expected convergence theory of the fractional Yamabe flow. 


\vspace{0.5cm}

We now introduce the flow under study. Suppose $M$ is the conformal infinity of a
Poincar\'{e}-Einstein manifold $(X,g_+)$. Moreover, we let $\gamma\in (0,1)\subset (0,n/2)$, and define the fractional Yamabe functional
\begin{equation}\label{eq: fractional Yamabe functional def}
    E(u):=\frac{\int_M uP_\gamma^{g_0}u\,d\mu_{g_0}}{\left(\int_M u^{\frac{2n}{n-2\gamma}}d\mu_{g_0}\right)^{\frac{n-2\gamma}{n}}},
\end{equation}
and fractional Yamabe energy 
\begin{equation}\label{eq: fractional yamabe energy def}
    Y_\gamma(M,[g_0])=\inf_{0\neq u\in H^\gamma(M)}E(u),
\end{equation}
where $P_\gamma^{g_0}$ is the conformal fractional Laplacian (for the precise definition, see Section \ref{sec: pre} and \cite[Section 2]{ChanSireSun}). Recall that the conformal fractional Laplacian satisfies
\[
    P_\gamma^{g_0}(uf)=u^{\frac{n+2\gamma}{n-2\gamma}}P_\gamma^g(f),\quad\forall f\in C^\infty(M),
\]
under the conformal change 
\begin{equation}\label{eq: conformal change}
    g=u^{\frac{4}{n-2\gamma}}g_0.
\end{equation}
In particular, on usual Euclidean metric $(\mathbb{R}^n,|dx|^2)$, we have $P_\gamma^{|dx|^2}=(-\Delta_{\mathbb{R}^n})^\gamma$.

The volume element on $(M,g_0)$ is denoted by $d\mu_{g_0}$. By replacing $g_0$ by its constant multiple we may assume the $(M,g_0)$ has unit volume, that is to say, $\mu_{g_0}(M)=1$. With a conformal metric \eqref{eq: conformal change}, we write
\[
    d\mu_g=u^{\frac{2n}{n-2\gamma}}d\mu_{g_0}.
\]

Assume $Y_\gamma(M,[g_0])>0$ and $\lambda_1(g_+)\geq\frac{n^2}{2}-\gamma^2$.
Then for each $x\in M$, there exists a unique positive Green's function $G(\cdot,x)$
on $\overline{X}\setminus\{x\}$, for the conformal fractional Laplacian $P_\gamma^g$ (see \cite[Proposition 1.5]{KimMussoWei}). Then we have the following Positive Mass conjecture, stated for fractional case: 

\begin{conjecture}[Positive Mass Conjecture]
Assume that $\gamma\in (0,1)$, $n>2\gamma$
Suppose that $Y_\gamma(M,[g_0])>0$. Fix any $y\in M$. Then there exists a small neighborhood of $y$ in $(\overline{X},\bar{g})$, which is diffeomorphic to a small neighborhood $\mathcal{N}\subset\mathbb{R}^{n+1}_+$ of $0$, such that 
\[
G(x,0)=c_{n,\gamma}|x|^{-n+2\gamma}+A_0+\psi(x)~~\mbox{ for }x\in\mathcal{N},
\]
Here, $c_{n,\gamma}=\pi^{-n/2}2^{-2\gamma}\Gamma(\gamma)^{-1}\Gamma(\frac{n}{2}-\gamma)$
and $\psi$ is a function in $\mathcal{N}$ satisfying
$$|\psi(x)|\leq C|x|^{\min\{1,2\gamma\}}~~\mbox{ and }~~|\nabla\psi(x)|\leq C|x|^{\min\{0,2\gamma-1\}}$$
for some constant $C>0$. Moreover, $A_0>0$, unless $(X,\bar{g})$ is conformally diffeomorphic to the standard unit ball. 
\end{conjecture}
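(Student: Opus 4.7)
The statement has two pieces: the asymptotic expansion of $G(\cdot,0)$ near $y$, and the positivity of the constant $A_0$.

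For the expansion, I would begin by choosing conformal normal coordinates at $y\in M$, rescaling so that $g_0$ osculates the Euclidean metric to the highest order allowed by dimension and $\gamma$. In these coordinates the leading singularity of the Green's function for $P_\gamma^{g_0}$ must coincide with that of the flat model $(-\Delta_{\mathbb{R}^n})^\gamma$, whose fundamental solution is exactly $c_{n,\gamma}|x|^{-n+2\gamma}$ by direct Fourier computation. Writing $\phi(x):=G(x,0)-c_{n,\gamma}|x|^{-n+2\gamma}$, one then lifts the problem to the Poincaré--Einstein filling $(X,g_+)$ via the Chang--Gonz\'alez extension, so that $\phi$ becomes the boundary value of a solution to a degenerate elliptic equation in $\overline{X}$. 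Feeding in the Poisson kernel asymptotics of Mayer--Ndiaye (and the pointwise gradient estimate the authors mention as their contribution), a parametrix construction and standard fractional Schauder bootstrap will give $\phi(0)=A_0$ together with the claimed bounds $|\psi(x)|\le C|x|^{\min\{1,2\gamma\}}$ and $|\nabla\psi(x)|\le C|x|^{\min\{0,2\gamma-1\}}$, the exponents reflecting the order of vanishing of the metric error in conformal normal coordinates and the gain produced by inverting $P_\gamma$.

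For the positivity, the plan is the conformal blow-up trick: set $\hat g=G(\cdot,0)^{4/(n-2\gamma)}g_0$ on $M\setminus\{y\}$ and interpret $(M\setminus\{y\},\hat g)$ as an asymptotically flat manifold in the fractional sense, with $A_0$ playing the role of an ADM-type mass. One then pulls this construction up to $X$ via the extension and tries either a spinorial Witten-type argument on the extended manifold, or a Schoen--Yau style minimal-surface/test-function argument, to show that this fractional mass is non-negative and vanishes only on the model. The rigidity statement (equality forcing conformal equivalence with the unit ball) would be recovered from the equality case of the underlying inequality, where $\hat g$ must be conformal to Euclidean space, and hence $(X,\bar g)$ conformal to the standard ball.

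The hard part, and the reason this remains a conjecture rather than a theorem, is the positivity step. Nonlocality of $P_\gamma^{g_0}$ prevents any direct appeal to pointwise identities like the Lichnerowicz formula for spinors or the stability second-variation used by Schoen--Yau, since each would require a pointwise product rule that is simply unavailable for $(-\Delta)^\gamma$. Passing to the extension converts nonlocality into a boundary-value problem, but then one must match a fractional notion of asymptotic flatness at $M$ with a bulk notion of asymptotic hyperbolicity on $X$, and existing asymptotically hyperbolic positive mass theorems (Chru\'sciel--Herzlich, Andersson--Cai--Galloway) do not obviously apply to the weighted degenerate operator that governs $P_\gamma$. Establishing the precise dictionary between $A_0$ and a geometric invariant of $(X,g_+)$ for which a positivity theorem is already known, or else developing a genuinely fractional Witten-type argument using the Caffarelli--Silvestre extension, is where I would expect essentially all of the work to lie; the expansion part, by contrast, is nowadays a routine consequence of the Mayer--Ndiaye asymptotics.
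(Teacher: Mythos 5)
This statement is labelled a \emph{Conjecture} in the paper, and the authors explicitly do \emph{not} prove it: they write that the fractional Positive Mass Conjecture is ``out of reach at the moment, due to the non-locality and sheer lack of tools to treat this case,'' and Theorem~\ref{main_thm} is proved \emph{conditionally} on it. So there is no ``paper's own proof'' to compare your attempt against; any attempt that claims to establish the full statement (in particular the positivity of $A_0$ and the rigidity case) would be proving something the authors deliberately leave as an assumption.

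Your proposal, to its credit, does not actually claim a proof. It is a roadmap that correctly separates the two pieces. For the expansion part, your plan (conformal normal coordinates, matching the leading singularity with that of $(-\Delta_{\mathbb{R}^n})^\gamma$, lifting to the Caffarelli--Silvestre / Chang--Gonz\'alez extension and bootstrapping via the Mayer--Ndiaye Poisson-kernel asymptotics) is consistent with what is known: expansions of this type are indeed obtained in \cite{KimMussoWei} and \cite{mayer2021asymptotics,mayer2024fractional}, which the paper cites for existence and asymptotics of $G$. For the positivity step, you offer two candidate strategies (a Witten-type spinor argument, or a Schoen--Yau style argument after the conformal blow-up $\hat g = G^{4/(n-2\gamma)}g_0$), and then you correctly explain why neither is currently available in the fractional setting and why the asymptotically hyperbolic positive mass theorems do not obviously transfer to the weighted degenerate operator on $X$. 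That diagnosis matches the paper's stance exactly: this is precisely why it remains a conjecture. The one caution worth adding is that you should not present the positivity step as ``where the work lies'' as though it were merely technical; as the paper makes clear, no one currently knows \emph{how} to make either of your two suggested strategies work, so the proposal establishes nothing beyond the (already known) expansion, and the statement remains conjectural.
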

The Positive Mass Conjecture for the fractional operators $P_\gamma^g$ is out of reach at the moment, due to the non-locality and sheer lack of tools to treat this case. So in the following we naturally assume that Positive Mass holds.

Let $R_\gamma^g:=P_\gamma^g(1)=u^{-\frac{n+2\gamma}{n-2\gamma}}P_\gamma^{g_0}(u)$ be the fractional curvature. As previously mentioned, this is the scalar curvature when $\gamma=1$ and $Q$-curvature when $\gamma=2$. Its average is denoted by
\[
s_\gamma^g=\int_MR_\gamma^g d\mu_g.
\]

The \textit{fractional Yamabe flow} is the evolution of the metric $g=g(t)$
given by
\begin{equation}\label{1.1}
\begin{cases}
    \frac{n-2\gamma}{4}\partial_tg&=(s_\gamma^g-R_\gamma^g)g\\
    g(0)&=g_0.
\end{cases}
\end{equation}
Writing with conformal change $g=u^{\frac{4}{n-2\gamma}}g_0$, the fractional Yamabe flow (\ref{1.1}) is equivalent to
\begin{equation}\label{1.2}
\begin{cases}
    \frac{\partial u}{\partial t}&=(s_\gamma^g-R_\gamma^g)u\\
    u(0)&\equiv 1.
\end{cases}
\end{equation}

Chan, Sire, and Sun \cite{ChanSireSun} proved the following:
\begin{theorem*}[Theorem 1.2 in \cite{ChanSireSun}]
For $\gamma\in (0,1)$, assume that $Y_\gamma(M,[g_0])>0$,
$\lambda_1(g_+)\geq\frac{n^2}{2}-\gamma^2$ and, in the case
$\gamma\in (\frac{1}{2},1)$, $H=0$, where $H$ denotes the mean curvature of $\partial_\infty X=M$.
Furthermore, assume the Positive Mass Conjecture holds with $A_0>0$.
If the initial energy of the initial metric $g(0)$ satisfies
\begin{equation}\label{eq: energy bound condition}
    s_\gamma^{g(0)}\leq \left[Y_\gamma(M,[g_0])^{\frac{n}{2\gamma}}+Y_\gamma(\mathbb{S}^n)^{\frac{n}{2\gamma}}\right]^{\frac{2\gamma}{n}},
\end{equation}
the fractional Yamabe flow \eqref{1.1} converges.
\end{theorem*}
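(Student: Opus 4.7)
My plan is to follow the Brendle-type strategy that has by now become standard for Yamabe-type flows, adapted to the nonlocal setting via the Chang--Gonz\'alez extension and the asymptotic analysis of Mayer--Ndiaye. First I would set up short-time and long-time existence of a smooth positive solution $u(t)$ to \eqref{1.2}. Short-time existence follows from parabolic theory for the fractional conformal Laplacian on $M$ (equivalently, a degenerate extension problem on $X$), applied to the fast-diffusion-type equation obtained by writing \eqref{1.2} in the form $\frac{n-2\gamma}{2n}\partial_t(u^{\frac{2n}{n-2\gamma}}) = s_\gamma^g u^{\frac{2n}{n-2\gamma}} - u P_\gamma^{g_0}u$. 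Positivity is preserved by the maximum principle and the volume $\mu_{g(t)}(M)=1$ is preserved by the normalization built into $s_\gamma^g$; long-time existence follows from the lower bound $Y_\gamma(M,[g_0])>0$, which gives coercivity of the energy.

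Second, I would establish the basic monotonicity: along the flow,
\[
\frac{d}{dt}s_\gamma^{g(t)} = -\frac{n-2\gamma}{2}\int_M \bigl(R_\gamma^g-s_\gamma^g\bigr)^2\,d\mu_g \leq 0,
\]
so $E(u(t)) = s_\gamma^{g(t)}$ decreases. Combined with \eqref{eq: energy bound condition}, this gives the key trapping
\[
s_\gamma^{g(t)}^{\,n/(2\gamma)} < Y_\gamma(M,[g_0])^{\,n/(2\gamma)} + Y_\gamma(\mathbb{S}^n)^{\,n/(2\gamma)} \qquad \text{for all } t\geq 0.
\]
The plan is to use this subcritical trapping to rule out bubbling. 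Integrating the monotonicity in time yields $\int_0^\infty \|R_\gamma^g-s_\gamma^g\|_{L^2(d\mu_g)}^2\,dt<\infty$, so along a time sequence $t_k\to\infty$ the functions $u_k=u(t_k)$ form a Palais--Smale-type sequence for the Yamabe functional $E$.

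Third, I would run a concentration--compactness (Struwe-type) decomposition for the fractional functional $E$: any such $(u_k)$ decomposes, up to a subsequence, as a solution $u_\infty\in H^\gamma(M)$ of the fractional Yamabe equation plus a finite number of standard Aubin--Talenti bubbles on $\mathbb{R}^n$ glued in at concentration points, with the energy splitting
\[
\lim_k s_\gamma^{g(t_k)}^{\,n/(2\gamma)} = E(u_\infty)^{n/(2\gamma)} + N\, Y_\gamma(\mathbb{S}^n)^{\,n/(2\gamma)},
\]
where $N$ is the number of bubbles. Comparison with the trapping above forces $N=0$, since $E(u_\infty)\geq Y_\gamma(M,[g_0])$ whenever $u_\infty\not\equiv 0$, and the case $u_\infty\equiv 0$ with $N=1$ is exactly what the positive mass hypothesis $A_0>0$ excludes (the usual test-function argument comparing a Schoen-type bubble-plus-Green ansatz with $Y_\gamma(\mathbb{S}^n)$ strictly lowers the Yamabe quotient, contradicting $Y_\gamma(M,[g_0])< Y_\gamma(\mathbb{S}^n)$ at the concentration point). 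Hence $u_k\to u_\infty$ strongly in $H^\gamma(M)$ with $u_\infty>0$ a solution of the fractional Yamabe equation. Standard elliptic regularity then gives uniform $C^\infty$ bounds along the flow.

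Finally, to upgrade subsequential to full convergence, I would invoke a {\L}ojasiewicz--Simon inequality for the analytic functional $E$ near the critical point $u_\infty$; this applies in the fractional setting since $P_\gamma^{g_0}$ is an analytic pseudodifferential operator on the compact manifold $M$ and the nonlinearity is analytic. Together with the gradient-flow structure and the uniform bounds, the {\L}ojasiewicz inequality yields $\int_0^\infty \|\partial_t u\|\,dt<\infty$, giving convergence of $u(t)$ as $t\to\infty$. The main obstacle I expect is the nonlocal concentration analysis in Step 3, specifically the comparison between ``glue-then-extend'' and ``extend-then-glue'' bubbles on the Poincar\'e--Einstein extension and the corresponding sharp energy expansion, which is where the fractional Green's function asymptotics and the positive mass hypothesis must be used carefully; this is precisely the nonlocal difficulty highlighted in the introduction.
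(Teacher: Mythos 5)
Your skeleton is the right one (Struwe-type decomposition, energy trapping, Łojasiewicz--Simon), but Step~3 contains a genuine gap. The monotonicity-plus-trapping argument does \emph{not} force $N=0$: it only gives $N\le 1$. If $u_\infty>0$, the inequality $E(u_\infty)\ge Y_\gamma(M,[g_0])$ combined with the trapping yields $N\,Y_\gamma(\mathbb{S}^n)^{n/(2\gamma)}\le Y_\gamma(\mathbb{S}^n)^{n/(2\gamma)}$, i.e.\ $N\le 1$, not $N=0$. If $u_\infty\equiv 0$, then $s_\infty^{n/(2\gamma)}=N\,Y_\gamma(\mathbb{S}^n)^{n/(2\gamma)}$, and since $0<Y_\gamma(M,[g_0])<Y_\gamma(\mathbb{S}^n)$ (which is what the positive-mass test-function argument yields), the trapping gives $1\le N\le 1$, i.e.\ \emph{exactly} $N=1$ --- so the positive-mass inequality cannot ``directly exclude'' this case, contrary to what you assert. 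The single-bubble configuration $(u_\infty\equiv 0,\,N=1)$ has $s_\infty = Y_\gamma(\mathbb{S}^n)$, which is fully compatible with the energy bound and with $Y_\gamma(M,[g_0])<Y_\gamma(\mathbb{S}^n)$. The point is that the bubble energy $E(u_{(x,\varepsilon)})$ tends to $Y_\gamma(\mathbb{S}^n)$ as $\varepsilon\to 0$; the positive-mass term only provides a strict deficit at each fixed $\varepsilon$, which vanishes in the limit, so a static comparison is inconclusive.

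What is actually needed to dispose of the $N=1$ configurations is precisely the quantitative {\L}ojasiewicz-type estimate (Proposition~\ref{prop1.1} in the present paper, or \cite[Prop.~3.5]{ChanSireSun}), which controls $s_\gamma^{g(t_\nu)}-s_\infty$ by a power of $\|R_\gamma^{g}-s_\gamma^{g}\|_{L^{2n/(n+2\gamma)}(d\mu_g)}$ \emph{along the flow}. This, combined with the flow structure, prevents the energy from lingering near the bubbling threshold long enough to concentrate, and it is what lets one then run \cite[Props.~3.6--3.10]{ChanSireSun} to get the uniform bound \eqref{1.3} and conclude $u_\infty>0$. Moreover, this estimate cannot be invoked as a black-box Łojasiewicz--Simon inequality ``since $P_\gamma^{g_0}$ is an analytic pseudodifferential operator'': the functional is not at a critical point --- it is near a sum $u_\infty+\sum u_{(x_k,\varepsilon_k)}$ --- and the kernel of the linearized operator at the bubbles must be projected out explicitly. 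The proof goes through the finite-dimensional perturbation $\bar u_z$ (Lemmas~\ref{lem: def of perturbed uinfty}--\ref{lem6.5}), the sharp bubble-interaction and self-interaction estimates (Proposition~\ref{prop: estimation of interaction of bubbles}), and the positive-mass-driven estimate $E(u_{(x,\varepsilon)})\le Y_\gamma(\mathbb{S}^n)$. Your proposal identifies these as ``the nonlocal difficulty'' but treats them as a technical nuisance rather than the logical crux; as written, the argument does not close. (A smaller discrepancy: the monotonicity constant is $-2$, as in \eqref{1.4}, not $-\frac{n-2\gamma}{2}$; this does not affect the sign but should be corrected.)
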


In this paper, we improve the above Theorem by removing the initial energy bound assumption. In particular, we have the following:

\begin{theorem}\label{main_thm}
Let $n\geq 2$. For $\gamma\in (0,1)$, assume that $Y_\gamma(M,[g_0])>0$,
$\lambda_1(g_+)\geq\frac{n^2}{2}-\gamma^2$ and, in the case
$\gamma\in (\frac{1}{2},1)$, $H=0$, where $H$ denotes the mean curvature of $\partial_\infty X=M$.
Furthermore, assume the Positive Mass Conjecture holds with $A_0>0$.
Then the fractional Yamabe flow \eqref{1.1} converges.
\end{theorem}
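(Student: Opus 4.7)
The plan is to follow Brendle's strategy \cite{Brendle} for the classical case $\gamma=1$, with all key objects lifted to the Chang--Gonz\'alez extension so that the nonlocal operator $P_\gamma^{g_0}$ is realized as a weighted Neumann-to-Dirichlet map. From \cite{ChanSireSun} we already have long-time existence of \eqref{1.2} together with monotone decay of $s_\gamma^{g(t)}$, so $E_\infty:=\lim_{t\to\infty}s_\gamma^{g(t)}$ exists, and the same reference settles the case in which $E_\infty$ lies strictly below the bubbling threshold in \eqref{eq: energy bound condition}. What remains is to exclude the complementary regime; in that regime a Struwe-type profile decomposition of $u(t_k)$ along a suitable sequence $t_k\to\infty$ would produce at least one bubble concentrating at a point $p\in M$, modeled on the fractional Aubin--Talenti profile $U_{\lambda,p}$ solving $(-\Delta_{\mathbb{R}^n})^\gamma U=U^{(n+2\gamma)/(n-2\gamma)}$.

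The key step is then to construct, at each large time, a test function $v_\lambda\in H^\gamma(M)$ whose Yamabe quotient $E(v_\lambda)$ is strictly smaller than the right-hand side of \eqref{eq: energy bound condition}, thereby contradicting either the definition of the Yamabe energy or the asymptotic identification of $E_\infty$. Following Schoen, the natural candidate replaces $U_{\lambda,p}$ by its glued version: outside a small ball around $p$ one uses a multiple of the Green's function $G(\cdot,p)$ of $P_\gamma^{g_0}$, while inside the ball one interpolates smoothly with $U_{\lambda,p}$ transported through normal coordinates at $p$. To compute the numerator of $E(v_\lambda)$ one must read off the Dirichlet integral on the extension, and here two a priori distinct constructions appear: one may either first glue on $M$ and then solve the degenerate extension problem (\emph{glue-and-extend}), or first extend the bubble and the Green's function separately and then glue on the extension (\emph{extend-and-glue}).

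Reconciling these two versions is where the new analysis enters. I would use the asymptotic expansion of the Poisson kernel for $P_\gamma^{g_0}$ due to Mayer--Ndiaye \cite{mayer2021asymptotics,mayer2024fractional} to obtain sharp pointwise bounds on the extended Aubin--Talenti profile, together with a novel pointwise gradient estimate strong enough to yield the weighted $L^{2n/(n+2\gamma)}$ control on the extension needed to drive \eqref{eq: L 2n / n+2gamma estimate on bubble}. Once the two bubble constructions agree up to $o(\lambda^{-(n-2\gamma)})$ errors, the energy expansion proceeds as in Brendle: the subleading term in $E(v_\lambda)^{n/(2\gamma)}$ is proportional to $-c_{n,\gamma}A_0\,\lambda^{-(n-2\gamma)}$, which is strictly negative by the Positive Mass Conjecture. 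This delivers the required strict inequality, contradicts the presence of a bubble, and forces $E_\infty$ back below the threshold, at which point \cite{ChanSireSun} applies and finishes the proof.

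The principal obstacle, as indicated in the introduction, is the quantitative comparison of the \emph{glue-and-extend} and \emph{extend-and-glue} bubbles: the discrepancies must be pushed to strictly higher order than $\lambda^{-(n-2\gamma)}$ so that the positive-mass gain is not swallowed by error terms. This in turn requires the new pointwise gradient estimate on the extended bubble, which has no direct analog in Brendle's local argument and depends on delicate Poisson-kernel asymptotics. The remaining ingredients -- profile decomposition along the flow, gluing with Green's function, monotonicity of $s_\gamma^{g(t)}$, and the final subcritical convergence argument -- are technically involved but amount to fractional adaptations of well-established local techniques.
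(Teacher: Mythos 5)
Your proposal correctly identifies the technical ingredients that the paper develops -- the Struwe-type profile decomposition along $t_\nu\to\infty$, the glue-and-extend versus extend-and-glue comparison of Schoen bubbles via the Mayer--Ndiaye Poisson-kernel asymptotics, and the role of the Positive Mass Conjecture in producing the strict inequality $E(u_{(x,\varepsilon)})\leq Y_\gamma(\mathbb{S}^n)$ -- but the way you assemble them into a contradiction does not close. The claim that a single test function $v_\lambda$ with $E(v_\lambda)$ below the threshold \eqref{eq: energy bound condition} ``contradicts the presence of a bubble, and forces $E_\infty$ back below the threshold'' is not logically available: $E(v_\lambda)<Y_\gamma(\mathbb{S}^n)$ only shows $Y_\gamma(M,[g_0])<Y_\gamma(\mathbb{S}^n)$, which is a statement about the infimum of the Yamabe quotient, while the flow energy $s_\infty$ can sit at any of the higher levels $\bigl(E(u_\infty)^{n/(2\gamma)}+m\,Y_\gamma(\mathbb{S}^n)^{n/(2\gamma)}\bigr)^{2\gamma/n}$ with $m\geq 1$, which is perfectly consistent with $Y_\gamma(M,[g_0])$ being small. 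A cheap test function therefore does not exclude bubbling along the flow; that is exactly why the problem with arbitrary initial energy is hard, and why the small-energy result of Chan--Sire--Sun cannot be boot-strapped by a restart argument without further input.

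What the paper actually proves, and what your outline is missing, is a \L{}ojasiewicz--Simon--type inequality (Proposition~\ref{prop1.1}): along any sequence $t_\nu\to\infty$ and after passing to a subsequence, $s_\gamma^{g(t_\nu)}-s_\infty\leq C\,\bigl(\int_M|R_\gamma^{g(t_\nu)}-s_\gamma^{g(t_\nu)}|^{2n/(n+2\gamma)}d\mu_{g(t_\nu)}\bigr)^{\frac{n+2\gamma}{2n}(1+\delta)}$. This is derived by decomposing $u_\nu=v_\nu+w_\nu$ with $v_\nu=\bar u_{z_\nu}+\sum_k\alpha_{k,\nu}u_{(x_{k,\nu},\varepsilon_{k,\nu})}$ chosen by a finite-dimensional minimization (so that $w_\nu$ satisfies orthogonality conditions), establishing spectral-gap estimates on the quadratic form in $w_\nu$ (Propositions~\ref{prop: estimate on u^{4gamma/n-2gamma}w_nu^2}, \ref{prop6.9}), and then expanding $s_\gamma^{g_\nu}=E(v_\nu)(\int v_\nu^{2n/(n-2\gamma)})^{(n-2\gamma)/n}+\ldots$ and comparing $E(v_\nu)$ with $s_\infty$ through the energy identity~\eqref{2.7} together with Corollaries~\ref{cor5.7} and~\ref{cor6.16}. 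Positive Mass enters here not as a direct contradiction with bubbling but through the strict bound $E(u_{(x,\varepsilon)})\leq Y_\gamma(\mathbb{S}^n)$ in~\eqref{eq: energy for Schoen's bubble estimation}, which gives the sign needed so that interaction and cut-off errors can be absorbed. Only after the L--S inequality is established does one get, as in~\cite{ChanSireSun}, uniform $C^0$ bounds on $u(t)$ and hence (a posteriori) $m=0$ and convergence. Two further omissions: you do not treat the case $u_\infty>0$ at all (Section~5 of the paper, involving the perturbed profile $\bar u_{z_\nu}$, the projection $\Pi$, and Lemmas~\ref{lem6.11}--\ref{lem6.13}), and you do not explain how the weighted $L^{2n/(n+2\gamma)}$ estimate~\eqref{eq: L 2n / n+2gamma estimate on bubble} feeds into the orthogonality estimates (Propositions~\ref{prop: some estimates on u^{n+2gamma/n-2gamma}w_nu} and~\ref{prop: estimates on w_nu}), which is its actual role.
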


\begin{remark}
    \begin{enumerate}
        \item In our main theorem, we did not specify in which sense the flow converges. Following previous works, the flow is globally defined and H\"older continuous. However, we expect that the flow is actually smooth, from the parabolic regularity theory, and the convergence can be stated in the $C^\infty(M)$ sense.
        \item We state the main theorem with $n\geq 2$, following previous works \cite{mayer2021asymptotics,mayer2024fractional}. However, we expect the same result is true for $n=1$, and $\gamma\leq 1/2$. 
    \end{enumerate}
\end{remark}

\subsection*{Notations}

We write here a list of symbols used throughout the paper. 

\begin{center}
\begin{tabular}{p{2cm}p{13cm}}
$\gamma$ & Real number in $(0,1)$\\
$(X,g_+)$ & Poincaré-Einstein manifold\\
$(M,g_0)$ & Conformal infinity of $(X,g_+)$\\
$\mu_g$ & Volume form on $(M,g)$\\
$E$ & Fractional Yamabe functional, see \eqref{eq: fractional yamabe energy def}\\
$Y_\gamma(M,[g_0])$ & Fractional Yamabe energy, see \eqref{eq: fractional yamabe energy def}\\
$F$ & Fractional Yamaabe type functional, see \eqref{48}\\
$B^g_r(x)$ & Geodesic ball in $(M,g)$ of radius $r>0$, centered at $x\in (M,g)$\\
$B_r(x)$ & Abbreviation of $B^{g_0}_r(x)$\\
$B_r^{g_+,+}(x)$ & Geodesic ball in $(X,g_+)$ of radius $r>0$, centered at $x\in \overline{X}$\\
$u$ & Conformal factor\\
$R_\gamma^g$ & Fractional scalar curvature of order $\gamma$, defined as $P_\gamma^g(1)=u^{-\frac{n+2\gamma}{n-2\gamma}}P_\gamma^{g_0}(u)$\\
$s_\gamma^g$ & Averaged fractional scalar curvature, $\int_M R^g_\gamma\,d\mu_g$\\
$u_{(x,\epsilon)}$ & Test bubbles, see Appendix \ref{Appendix B}\\
$o(1)$ & quantity that converges to $0$ in $\nu\rightarrow+\infty$
\end{tabular}
\end{center}
\section{Preliminaries}\label{sec: pre}

In this section, following \cite{chang2011fractional,ChanSireSun}, we first provide the definition of fractional conformal Laplacian. Then, we give some preliminary results on the Yamabe flow and sketch the proof of Theorem \ref{main_thm}, provided Proposition \ref{prop1.1}. In the following, we will always assume that $(M,[g_0])$ is the conformal infinity of $(X,g_+)$, both equipped with appropriate metrics. A function $y$ is a defining function on $M$ in $X$ if 
\[
    y=0\text{ on } M,\quad y>0\text{ in }X\quad\text{and}\quad |dy|\neq 0\text{ on }M.
\]
For each representative metric $g\in [g_0]$, there is a unique geodesic defining function $y$ associated to $g$ such that $g_+=y^{-2}(dy^2+g_y)$ where $g_y$ is one parameter family of metrics on $M$ satisfying $g_y|_M=g$. Moreover, $g_y$ has an asymptotic expansion which contains only even powers of $y$, at least up to degree $n$. Consequently $M$ is totally geodesic in $(X,y^2 g_+)$. 

Assume $\lambda_1(-\Delta_{g_+})>\frac{n^2}{4}-\gamma^2$. According to \cite[Theorem 4.7]{chang2011fractional}, there exists a special defining function $y^*$ enjoying the following Caffarelli-Silvestre type extension property. For any smooth function $u$ on $M$, one can find extension $U$ on $X$ satisfying 
\begin{equation}
    \left\{
        \begin{alignedat}{3}
            -\text{div}((y^*)^{1-2\gamma}\nabla U)&=0&&\quad\text{in}\quad (X,(y^*)^2g_+)\\
            U&=u&&\quad\text{on}\quad (M,g)\\
            (P_\gamma^g-R^g_\gamma)u&=-c_\gamma\lim_{y^*\rightarrow 0}(y^*)^{1-2\gamma}\partial_{y^*}U&&\quad\text{on}\quad(M,g),
        \end{alignedat}
    \right.
\end{equation}
where $c_\gamma$ is a positive constant which can be found in \cite{chang2011fractional}.

The flow (\ref{1.1}) preserves the volume of $(M,g)$, i.e.
$$\frac{d}{dt}\left(\int_Md\mu_g\right)=0~~\mbox{ for all }t\geq 0.$$
Hence, if we assume that $\displaystyle\int_Md\mu_{g_0}=1$, we have
\begin{equation}\label{1.5}
\int_Mu^{\frac{2n}{n-2\gamma}}d\mu_{g_0}=\int_Md\mu_{g}=1~~\mbox{ for all }t\geq 0.
\end{equation}
Along the flow (\ref{1.1}), there holds (c.f \cite[Lemma 2.2]{ChanSireSun})
\begin{equation}\label{1.4}
\frac{d}{dt}s_\gamma^{g}=-2\int_M|R_\gamma^g-s_\gamma^g|^2d\mu_g.
\end{equation}
It follows from (\ref{1.5}) and the definition of $Y_\gamma(M,[g_0])$ that
$s_\gamma^{g}\geq Y_\gamma(M,[g_0])>0$. This together with (\ref{1.4}) implies that
$s_\infty=\displaystyle\lim_{t\to\infty}s_\gamma^g$ exists.

As the key lemma of the proof, we have the following:

\begin{prop}\label{prop1.1}
Let  $\{t_\nu:\nu\in\mathbb{N}\}$ be a sequence of times such that $t_\nu\to\infty$ as $\nu\to\infty$.
Then there exist $\delta\in (0,1)$ and a constant $C$ such that, after passing to a subsequence, we have
$$s_\gamma^{g(t_\nu)}-s_\infty\leq C\left(\int_M|R_\gamma^{g(t_\nu)}-s_\gamma^{g(t_\nu)}|^{\frac{2n}{n+2\gamma}}d\mu_{g(t_\nu)}\right)^{\frac{n+2\gamma}{2n}(1+\delta)}$$
for all integers $\nu$ in that sequence. Note that $\delta$ and $C$ may depend on the
sequence   $\{t_\nu:\nu\in\mathbb{N}\}$.
\end{prop}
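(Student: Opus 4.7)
The plan is to argue by contradiction, following the Brendle-type strategy but carried out on the extension side to accommodate the nonlocal operator $P_\gamma^{g_0}$. Suppose the claimed inequality fails. Then, after passing to a subsequence, one finds $u_\nu := u(t_\nu)$ such that
\[
    \int_M |R_\gamma^{g_\nu} - s_\gamma^{g_\nu}|^{\frac{2n}{n+2\gamma}} \, d\mu_{g_\nu} \longrightarrow 0,
\]
while $s_\gamma^{g_\nu} \to s_\infty$ at a rate strictly slower than any power $(1+\delta)$ of the left-hand side. By the energy identity \eqref{1.4} and standard Palais--Smale type reasoning for the functional $E$ (respectively $F$ from \eqref{48}), the sequence $u_\nu$ is an asymptotic critical sequence for the fractional Yamabe functional, and concentration-compactness forces a decomposition
\[
    u_\nu = u_\infty + \sum_{k=1}^{K} w_{\nu,k} + r_\nu,
\]
where $u_\infty$ solves the Euler--Lagrange equation $P_\gamma^{g_0} u_\infty = s_\infty u_\infty^{(n+2\gamma)/(n-2\gamma)}$, each $w_{\nu,k}$ is a rescaled and translated fractional Aubin--Talenti bubble concentrating at a point $x_k\in M$ with scale $\epsilon_{\nu,k} \to 0$, and $r_\nu \to 0$ in $H^\gamma(M)$.

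From here, the strategy is to test the energy against the precise \emph{glued} test bubbles $u_{(x,\epsilon)}$ (see Appendix \ref{Appendix B}) built from the Aubin--Talenti profile matched to the Green's function $G(\cdot,x)$ of $P_\gamma^{g_0}$. The Positive Mass Conjecture, giving $G(x,0)=c_{n,\gamma}|x|^{-n+2\gamma}+A_0+\psi(x)$ with $A_0>0$, yields a strictly negative next-to-leading term in the Taylor expansion
\[
    E(u_\infty + w_{\nu,k}) \le \left[Y_\gamma(M,[g_0])^{n/(2\gamma)} + Y_\gamma(\mathbb{S}^n)^{n/(2\gamma)}\right]^{2\gamma/n} - c A_0 \,\epsilon_{\nu,k}^{n-2\gamma} + o(\epsilon_{\nu,k}^{n-2\gamma}).
\]
Comparing this energy upper bound to the algebraic lower bound coming from the assumed slow decay of $\int_M|R_\gamma^{g_\nu}-s_\gamma^{g_\nu}|^{2n/(n+2\gamma)}\,d\mu_{g_\nu}$ then forces a contradiction, exactly as in Brendle's proof for $\gamma=1$. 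This supplies the required $\delta>0$ and constant $C$.

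The principal obstacle is the mismatch between ``extend-then-glue'' and ``glue-then-extend'' versions of the test bubble. Because $P_\gamma^{g_0}$ is nonlocal, one cannot simply apply the Leibniz rule on $M$ to isolate the mass term; instead one must work with the Caffarelli--Silvestre--Chang--Gonz\'alez extension on $X$, where the truncated bubble, the Green's function extension, and their cutoff-glued hybrid all differ at the same order in $\epsilon$. Controlling these discrepancies requires the sharp asymptotics of the Poisson kernel developed by Mayer--Ndiaye \cite{mayer2021asymptotics,mayer2024fractional} together with the new pointwise gradient estimate advertised in the introduction, which is precisely what drives \eqref{eq: L 2n / n+2gamma estimate on bubble}. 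Once these technical estimates are in place, the contradiction argument proceeds in parallel to \cite{Brendle}, and the strict positivity of $A_0$ closes the bound.
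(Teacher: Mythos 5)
Your proposal gets the broad landscape right (Palais--Smale decomposition, Schoen-type glued bubbles, the ``glue-and-extend versus extend-and-glue'' difficulty, Mayer--Ndiaye asymptotics), but it is missing the single ingredient that actually \emph{produces} the exponent $1+\delta$, and the overall logical frame does not match what is needed.

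First, on the logical structure. The statement is an existence claim: after passing to a subsequence, \emph{there exist} $\delta$ and $C$ making the displayed inequality hold for all $\nu$ in that subsequence. A contradiction argument would have to negate this carefully, and you never specify what the ``algebraic lower bound coming from the assumed slow decay'' actually is or how it clashes with the bubble energy expansion. In the paper (following Brendle), the proof is a \emph{direct} estimate: the expansion
\[
    s_\gamma^{g_\nu} \leq s_\infty + \big(E(v_\nu)-s_\infty\big)\Big(\int_M v_\nu^{\frac{2n}{n-2\gamma}}d\mu_{g_0}\Big)^{\frac{n-2\gamma}{n}} + C\Big(\int_M u_\nu^{\frac{2n}{n-2\gamma}}|R_\gamma^{g_\nu}-s_\infty|^{\frac{2n}{n+2\gamma}}d\mu_{g_0}\Big)^{\frac{n+2\gamma}{2n}},
\]
together with the coercivity estimate of Corollary~\ref{cor3.5} / Corollary~\ref{cor6.10} to absorb the quadratic remainder in $w_\nu$, reduces the statement to bounding $E(v_\nu)-s_\infty$.

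Second, and more importantly, you never explain where $\delta$ comes from. In the case $u_\infty > 0$ the limit equation \eqref{2.4} may have a nontrivial (even positive-dimensional) set of solutions, and one must control the drift of $u_\nu$ along the kernel of the linearized operator. The paper handles this by the finite-dimensional reduction $z\mapsto\bar{u}_z$ of Lemma~\ref{lem: def of perturbed uinfty} and, crucially, by the \L{}ojasiewicz--Simon type inequality of Lemma~\ref{lem6.5},
\[
    E(\bar{u}_z)-E(u_\infty) \leq C\sup_{a\in A}\Big|\int_M\psi_a\big(P_\gamma^{g_0}\bar{u}_z - s_\infty\bar{u}_z^{\frac{n+2\gamma}{n-2\gamma}}\big)\,d\mu_{g_0}\Big|^{1+\delta},
\]
which is the only place $\delta\in(0,1)$ enters. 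Your proposal contains no analogue of this inequality, nor any mechanism for obtaining a power strictly larger than $1$.

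Third, the role you assign to the Positive Mass hypothesis is off. You invoke it to produce a quantitative negative term $-cA_0\,\epsilon^{n-2\gamma}$ and then propose comparing it with the curvature norm. In the paper the Positive Mass input enters through $E(u_{(x,\epsilon)})\le Y_\gamma(\mathbb{S}^n)$ (equation \eqref{eq: energy for Schoen's bubble estimation}), which together with the self- and cross-interaction estimates of Proposition~\ref{prop: estimation of interaction of bubbles} yields Corollary~\ref{cor5.7} and Proposition~\ref{prop6.15}. The negative term $-c\sum_k\varepsilon_{k,\nu}^{(n-2\gamma)/2}$ in Proposition~\ref{prop6.15} is not compared to the curvature norm at all; it is used to \emph{cancel} the error $+C\sum_k\varepsilon_{k,\nu}^{(n-2\gamma)/2(1+\delta)}$ produced by Proposition~\ref{prop6.14}, so that only the $L^{2n/(n+2\gamma)}$-term survives.

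Finally, you do not separate the cases $u_\infty\equiv 0$ and $u_\infty>0$. These require genuinely different treatment: in the first, \eqref{2.7} with $E(u_\infty)=0$ and Corollary~\ref{cor5.7} give $E(v_\nu)-s_\infty\le 0$ directly so any $\delta$ works, whereas the second requires the full finite-dimensional reduction machinery. As written, the proposal gestures at the right auxiliary tools but lacks the decomposition into cases, the optimal choice of bubble parameters via the minimization \eqref{3.1}/\eqref{108}, the orthogonality relations of Proposition~\ref{prop: some estimates on u^{n+2gamma/n-2gamma}w_nu} / \ref{prop: estimates on w_nu}, and above all the \L{}ojasiewicz inequality; so in its current form it does not constitute a proof.
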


If Proposition \ref{prop1.1} is proved, then
one can follow the argument in \cite{ChanSireSun}
(see Propositions 3.6-3.10 in \cite{ChanSireSun})
to prove that the function $u=u(t)$ in (\ref{1.2}) is uniformly bounded, i.e.
\begin{equation}\label{1.3}
C^{-1}\leq\inf_Mu(t)\leq \sup_M u(t)\leq C
\end{equation}
for some positive constant $C$ independent of $t$.
With the uniform bound in (\ref{1.3}), one can then follow the proof of
\cite[Proposition 3.1]{ChanSireSun} to conclude that
$u$ is H\"{o}lder continuous, i.e. for any fixed $\frac{n}{2\gamma}<p<\frac{n+2\gamma}{2\gamma}$, $\alpha=2\gamma-\frac{n}{p}>0$, and $T>0$, there exists $C(T)>0$ such that
\[
|u(x_1,t_1)-u(x_2,t_2)|\leq C(T)\big(|t_1-t_2|^{\frac{\alpha}{2}}+d(x_1,x_2)^\alpha\big),\quad\forall t_1,t_2\in[0,T],\quad\forall x_1,x_2\in M.
\]
From this, following standard parabolic theory, one can derive higher order estimates, and then it concludes that the flow (\ref{1.1}) converges. This proves Theorem \ref{main_thm}.

Therefore, it reduces to prove Proposition \ref{prop1.1}.

\section{Blow-up analysis}

Let  $\{t_\nu:\nu\in\mathbb{N}\}$ be a sequence of times
such that $t_\nu\to\infty$ as $\nu\to\infty$.
For abbreviation, let $u_\nu=u(t_\nu)$ and
$g_\nu=g(t_\nu)=u(t_\nu)^{\frac{4}{n-2\gamma}}g_0=u_\nu^{\frac{4}{n-2\gamma}}g_0$.
The normalization condition (\ref{1.5}) implies that
\begin{equation}\label{2.1}
\int_Md\mu_{g_\nu}=\int_Mu_\nu^{\frac{2n}{n-2\gamma}}d\mu_{g_0}=1~~\mbox{ for all }\nu\in\mathbb{N}.
\end{equation}
Moreover, it follows from \cite[Lemma 3.2]{ChanSireSun} that, if we define
\[
    F_p(g):=\int_M |R_\gamma^g-s_\gamma^g|^p\,d\mu_g,
\]
then for $1\leq p<\frac{n+2\gamma}{2\gamma}$, we have
\begin{equation*}
\lim_{t\to\infty} F_{p}(g(t))=0.
\end{equation*}
Choosing $p=\frac{2n}{n+2\gamma}$ and $t=t_{\nu}$ yields
\begin{equation*}
\int_M|R_\gamma^{g_\nu}-s_\infty|^{\frac{2n}{n+2\gamma}}d\mu_{g_\nu}\to 0~~\mbox{ as }\nu\to\infty,
\end{equation*}
which gives
\begin{equation}\label{2.3}
\int_M\Big|P_\gamma^{g_0}u_\nu-s_{\infty}u_\nu^{\frac{n+2\gamma}{n-2\gamma}}\Big|^{\frac{2n}{n+2\gamma}}d\mu_{g_0}\to 0~~\mbox{ as }\nu\to\infty.
\end{equation}

\begin{lem}\label{lem2.1}
Let $\{u_\nu:\nu\in\mathbb{N}\}$ be a sequence of positive functions satisfying
\eqref{2.1} and \eqref{2.3}. After passing to a subsequence if necessary, we can find
a nonnegative integer $m$, a nonnegative smooth function $u_\infty$ and a
sequence of $m$-tuplets $(x_{k,\nu}^*,\varepsilon_{k,\nu}^*)_{1\leq k\leq m}$
with the following properties:\\
(i) The function $u_\infty$ satisfies the equation
\begin{equation}\label{2.4}
P_\gamma^{g_0}u_\infty=s_\infty u_\infty^{\frac{n+2\gamma}{n-2\gamma}}~~\mbox{ in }M.
\end{equation}
(ii) $\varepsilon_1^*$ converges to zero for any $i$, and for all $i\neq j$, we have
\begin{equation}\label{2.5}
\frac{\varepsilon_1^*}{\varepsilon_2^*}+\frac{\varepsilon_2^*}{\varepsilon_1^*}
+\frac{d(x_1^*,x_2^*)}{\varepsilon_1^*\varepsilon_2^*}\to\infty
\end{equation}
as $\nu\to\infty$.\\
(iii) We have
\begin{equation}\label{2.6}
u_\nu-u_\infty-\sum_{k=1}^mu_{(x_{k,\nu}^*,\varepsilon_{k,\nu}^*)}\to 0~~\mbox{ in }H^\gamma(M),
\end{equation}
as $\nu\to\infty$. Here the function $u_{(x_{k,\nu}^*,\varepsilon_{k,\nu}^*)}$
are fractional test bubbles constructed in \cite[Section 3.2]{mayer2024fractional}, up to some multiplicative constant (see Appendix \ref{Appendix B} and \eqref{eq: def of test bubble}). 
\end{lem}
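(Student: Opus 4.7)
The plan is a Struwe-type profile decomposition for the subcritical residual \eqref{2.3}, adapted to the nonlocal operator $P_\gamma^{g_0}$, with bubbles matched to the manifold test profiles $u_{(x,\varepsilon)}$ of \cite{mayer2024fractional}. The starting point is that \eqref{2.1} together with $Y_\gamma(M,[g_0])>0$ gives a uniform $H^\gamma(M)$-bound on $\{u_\nu\}$. Passing to a subsequence, $u_\nu\rightharpoonup u_\infty$ weakly in $H^\gamma(M)$, strongly in $L^q$ for $q<2n/(n-2\gamma)$, and pointwise a.e.\ Testing \eqref{2.3} against smooth functions and using the strong subcritical convergence on the nonlinear side yields (i): $u_\infty\geq 0$ is a weak, hence smooth, solution of \eqref{2.4}.

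Set $v_\nu:=u_\nu-u_\infty$. If $v_\nu\to 0$ in $H^\gamma(M)$, take $m=0$. Otherwise, by a concentration-compactness alternative at the critical Sobolev exponent $2n/(n-2\gamma)$, there exist $x_{1,\nu}\to x_1^*\in M$ and $\varepsilon_{1,\nu}\to 0$ such that the rescalings
\[
w_\nu(\xi):=\varepsilon_{1,\nu}^{(n-2\gamma)/2}\,v_\nu\bigl(\exp_{x_{1,\nu}}(\varepsilon_{1,\nu}\xi)\bigr)
\]
concentrate a fixed positive amount of $L^{2n/(n-2\gamma)}$-mass in a fixed ball. Using the Caffarelli--Silvestre extension on $(X,g_+)$ to localize $P_\gamma^{g_0}$, the extensions of $w_\nu$ converge weakly to a nontrivial $W\in\dot H^\gamma(\mathbb{R}^n)$ satisfying the Euclidean critical equation $(-\Delta)^\gamma W=s_\infty W^{(n+2\gamma)/(n-2\gamma)}$; by the Chen--Li--Ou / Jin--Li--Xiong classification, $W$ is an Aubin--Talenti bubble. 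Choose $(x_{1,\nu}^*,\varepsilon_{1,\nu}^*)$ accordingly and subtract the corresponding manifold test bubble $u_{(x_{1,\nu}^*,\varepsilon_{1,\nu}^*)}$. The sharp asymptotics of \cite{mayer2024fractional} yield the Br\'ezis--Coron orthogonality
\[
\|v_\nu-u_{(x_{1,\nu}^*,\varepsilon_{1,\nu}^*)}\|_{H^\gamma}^2=\|v_\nu\|_{H^\gamma}^2-\|u_{(x_{1,\nu}^*,\varepsilon_{1,\nu}^*)}\|_{H^\gamma}^2+o(1),
\]
and the new remainder $v_\nu^{(2)}:=v_\nu-u_{(x_{1,\nu}^*,\varepsilon_{1,\nu}^*)}$ still obeys a residual estimate analogous to \eqref{2.3}.

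Iterating, each bubble contributes at least the Sobolev quantum $Y_\gamma(\mathbb{S}^n)^{n/(2\gamma)}$ in the relevant norm, so the extraction terminates after finitely many steps $m$ and produces \eqref{2.6}. The separation \eqref{2.5} is built into the construction: if the quantity in \eqref{2.5} stayed bounded for some pair $(i,j)$, then the two profiles would appear simultaneously in a common rescaling and merge into a single entire solution, contradicting either the minimality of the previous extraction or the orthogonality identity.

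The main obstacle is precisely the nonlocality of $P_\gamma^{g_0}$: pointwise cut-offs and product-rule manipulations are not available, so both the Br\'ezis--Coron orthogonality and the verification of the subcritical PDE residual for $v_\nu^{(2)}$ must be carried out on the extended problem on $(X,g_+)$. The delicate point — and the place where the novel pointwise gradient estimate mentioned in the introduction enters — is to reconcile the ``glue-then-extend'' and ``extend-then-glue'' constructions of the bubble near $x_{1,\nu}^*$, so that the energy identity above and the dual bound $\|P_\gamma^{g_0}v_\nu^{(2)}-s_\infty(v_\nu^{(2)})^{(n+2\gamma)/(n-2\gamma)}\|_{L^{2n/(n+2\gamma)}}=o(1)$ hold with the specific test bubbles $u_{(x,\varepsilon)}$ of \cite{mayer2024fractional}, rather than with an idealized Euclidean profile.
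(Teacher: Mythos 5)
Your proposal is correct in spirit but takes a visibly different route from the paper: you reconstruct a Struwe-type profile decomposition from scratch (iterated concentration extraction, Chen--Li--Ou/Jin--Li--Xiong classification of entire solutions, Br\'ezis--Coron orthogonality, quantization and termination), whereas the paper observes that \eqref{2.1} gives a bounded sequence, \eqref{2.3} controls the $H^{-\gamma}(M)$ norm of the derivative of the Yamabe functional (so $\{u_\nu\}$ is a Palais--Smale sequence), and then simply invokes the known global compactness theorem of Palatucci--Pisante (and Fang--Gonz\'alez) adapted to the manifold setting, together with the remark that the manifold test bubbles $u_{(x,\varepsilon)}$ differ from the idealized Euclidean profiles by an $o(1)$ error in $H^\gamma(M)$. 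Both approaches reach the same decomposition; yours buys self-containedness at the cost of reproving a cited theorem, while the paper's is essentially a reduction to the literature.

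One point to correct: you suggest that the paper's ``novel pointwise gradient estimate'' (needed to reconcile the glue-then-extend and extend-then-glue constructions) enters at this stage, to obtain the residual bound $\|P_\gamma^{g_0}v_\nu^{(2)}-s_\infty(v_\nu^{(2)})^{\frac{n+2\gamma}{n-2\gamma}}\|_{L^{2n/(n+2\gamma)}}=o(1)$ for the next iterate. In the paper that estimate is used later, to prove \eqref{eq: L 2n / n+2gamma estimate on bubble} in Proposition~\ref{prop: estimation of interaction of bubbles} (which feeds into Proposition~\ref{prop: some estimates on u^{n+2gamma/n-2gamma}w_nu} and its analogue), not in Lemma~\ref{lem2.1}. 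The proof of Lemma~\ref{lem2.1} only needs the softer observation that Schoen's bubble converges to the standard bubble in $H^\gamma(M)$, which allows translating the Palatucci--Pisante profile into the test bubble $u_{(x_{k,\nu}^*,\varepsilon_{k,\nu}^*)}$ inside the $H^\gamma$ convergence \eqref{2.6}. If you want to carry out the iteration by hand, this $H^\gamma$ comparison of bubbles is indeed the ingredient to make the subtraction well-defined, but you do not need the sharp $L^{2n/(n+2\gamma)}$ pointwise control here.
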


\begin{remark}\label{rmk: positivity of bubbles}
    Schoen's bubble is constructed by attaching Green's function $G$ and standard fractional bubble profile
    \[
    \left(\frac{\varepsilon}{\varepsilon^2+|x|^2}\right)^{\frac{n-2\gamma}{2}},
    \]
    by cut-off. Therefore from the positivity of $G$, Schoen's bubble is strictly positive everywhere on $M$. 
\end{remark}

\begin{proof}
\eqref{2.3} controls $(H^\gamma(M))^*=H^{-\gamma}(M)$ norm of the derivative of the Yamabe functional. Boundedness of the Yamabe functional follows from the normalization and monotonicity \eqref{1.4}. Thus such sequence satisfies the Palais-Smale condition. As pointed out in \cite{ChanSireSun}, the conclusion follows from \cite{PalatucciPisante}, after repeating same argument on the manifold setting (also note that Schoen's bubble converges to standard bubble in $H^\gamma(M)$). See also \cite[Theorem 1.3]{FangG}. 
\end{proof}

From \cite[Lemma 3.5]{ChanSireSun} (see also \cite{DQ}), we have
either $u_\infty>0$ or $u_\infty\equiv 0$. We are going to deal with these two cases separately.

Because of the interaction of bubbles decay (see Proposition \ref{prop: estimation of interaction of bubbles}), we have
\begin{equation*}
\begin{split}
1 &=\lim_{\nu\to\infty}\int_Mu_\nu^{\frac{2n}{n-2\gamma}}d\mu_{g_0}\\
&=\lim_{\nu\to\infty}\left(\int_Mu_\infty^{\frac{2n}{n-2\gamma}}d\mu_{g_0}
+\sum_{k=1}^m\int_Mu_{(x_{k,\nu}^*,\varepsilon_{k,\nu}^*)}^{\frac{2n}{n-2\gamma}}d\mu_{g_0}\right)\\
&=\left(\frac{E(u_\infty)}{s_\infty}\right)^{\frac{n}{2\gamma}}+m\left(\frac{Y_\gamma(\mathbb{S}^n)}{s_\infty}\right)^{\frac{n}{2\gamma}},
\end{split}
\end{equation*}

which gives

\begin{equation}\label{2.7}
s_\infty=\left[E(u_\infty)^{\frac{n}{2\gamma}}+mY_\gamma(\mathbb{S}^n)^{\frac{n}{2\gamma}}\right]^{\frac{2\gamma}{n}}.
\end{equation}

This will be an essential part in the proof of Proposition \ref{prop1.1}.

We conclude this section by listing crucial estimates for Schoen’s bubbles.

\begin{prop}\label{prop: estimation of interaction of bubbles}
    Suppose $\varepsilon_2> \varepsilon_1$. We have
    \begin{equation}\label{eq: fractional conformal Lapalcian interaction term estimate}
        \int_M u_{(x_1,\varepsilon_1)}P_\gamma^{g_0}u_{(x_2,\varepsilon_2)}\leq C\left(\frac{\varepsilon_2^2+d(x_1,x_2)^2}{\varepsilon_1\varepsilon_2}\right)^{-\frac{n-2\gamma}{2}},
    \end{equation}
    \begin{equation}\label{eq: nonlinearity interaction term estimate}
        \int_M u_{(x_1,\varepsilon_1)}^{\frac{n+2\gamma}{n-2\gamma}}u_{(x_2,\varepsilon_2)}\,d\mu_{g_0}\leq C\left(\frac{\varepsilon_2^2+d(x_1,x_2)^2}{\varepsilon_1\varepsilon_2}\right)^{-\frac{n-2\gamma}{2}},
    \end{equation}
    \begin{equation}\label{eq: Main interaction of bubble term estimate}
        \int_M u_{(x_1,\varepsilon_1)}\left(P_\gamma^{g_0}u_{(x_2,\varepsilon_2)}-F(u_{(x_2,\varepsilon_2)})u_{(x_2,\varepsilon_2)}^{\frac{n+2\gamma}{n-2\gamma}}\right)\,d\mu_{g_0}\leq o_{\varepsilon_2}(1)\left(\frac{\varepsilon_2^2+d(x_1,x_2)^2}{\varepsilon_1\varepsilon_2}\right)^{-\frac{n-2\gamma}{2}},
    \end{equation}
    and
    \begin{equation}\label{eq: L 2n / n+2gamma estimate on bubble}
        \int_M\left| P_\gamma^{g_0}u_{(x,\varepsilon)}-s_\infty u_{(x,\varepsilon)}^{\frac{n+2\gamma}{n-2\gamma}} \right|^{\frac{2n}{n+2\gamma}}\,d\mu_{g_0}= o_\varepsilon(1).
    \end{equation}
    Moreover, Schoen's bubble satisfies energy estimate
    \begin{equation}\label{eq: energy for Schoen's bubble estimation}
        E(u_{(x,\varepsilon)})\leq Y_\gamma(\mathbb{S}^n),
    \end{equation}
    for $\varepsilon$ small enough.
\end{prop}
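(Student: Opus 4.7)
The plan is to treat the five estimates uniformly by exploiting the two-scale structure of Schoen's bubble: at the intermediate scale where the cutoff lives, $u_{(x,\varepsilon)}$ interpolates between an Aubin--Talenti profile near the pole and a multiple of the Green's function $G(\cdot,x)$ away from it. Because $P_\gamma^{g_0}$ admits no pointwise product rule, I would carry out all the $P_\gamma$-computations on the Caffarelli--Silvestre--Chang--Gonz\'alez extension in $X$ and collect the trace on $M$ only at the end.

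For estimates \eqref{eq: fractional conformal Lapalcian interaction term estimate} and \eqref{eq: nonlinearity interaction term estimate}, I would use the region-splitting argument familiar from the classical case, grounded on the pointwise bound $u_{(x_i,\varepsilon_i)}(z) \sim (\varepsilon_i/(\varepsilon_i^2+d(z,x_i)^2))^{(n-2\gamma)/2}$ valid outside the cutoff annulus, matched by the Green's function asymptotics. Direct integration produces the common interaction scale $(\varepsilon_2^2+d(x_1,x_2)^2)/(\varepsilon_1\varepsilon_2)$. Passing from \eqref{eq: nonlinearity interaction term estimate} to \eqref{eq: fractional conformal Lapalcian interaction term estimate} uses that the Aubin--Talenti piece nearly satisfies $P_\gamma u \approx u^{(n+2\gamma)/(n-2\gamma)}$ while $P_\gamma G(\cdot,x_2)$ vanishes away from $x_2$, so only the cutoff annulus contributes a lower-order error. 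Estimate \eqref{eq: Main interaction of bubble term estimate} is of the same flavor but sharper: the defect $P_\gamma^{g_0}u_{(x_2,\varepsilon_2)}-F(u_{(x_2,\varepsilon_2)})u_{(x_2,\varepsilon_2)}^{(n+2\gamma)/(n-2\gamma)}$ is essentially supported in the cutoff annulus and has $L^1$-mass vanishing with $\varepsilon_2$; paired with the pointwise bound on $u_{(x_1,\varepsilon_1)}$ there, this yields the extra $o_{\varepsilon_2}(1)$ factor multiplying the same scale.

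The main obstacle is \eqref{eq: L 2n / n+2gamma estimate on bubble}, which measures the full defect of Schoen's bubble from the limit equation $P_\gamma^{g_0}u = s_\infty u^{(n+2\gamma)/(n-2\gamma)}$ in the critical norm $L^{2n/(n+2\gamma)}$. The delicate issue is that Schoen's bubble is built by cutoff at the level of $M$ (the ``glue-and-extend'' version), whereas the object one can differentiate at the extension level is the cutoff of the two extensions (the ``extend-and-glue'' version); these differ by a degenerate-harmonic correction whose boundary trace is nontrivial, and controlling this correction in the sharp norm is where the nonlocality bites hardest. To bound it, I would invoke the Poisson kernel asymptotics of Mayer--Ndiaye in \cite{mayer2021asymptotics,mayer2024fractional} together with the pointwise gradient estimate flagged in the introduction; without such gradient control the best one obtains is a bound of order one, which is insufficient.

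Finally, \eqref{eq: energy for Schoen's bubble estimation} follows by expanding $E(u_{(x,\varepsilon)})$ via the Positive Mass expansion $G(\cdot,x)=c_{n,\gamma}|\cdot|^{-(n-2\gamma)}+A_0+\psi$. The leading term reproduces $Y_\gamma(\mathbb{S}^n)$, attained on $\mathbb{R}^n$ by the Aubin--Talenti extremizer, while the first correction is of order $A_0\,\varepsilon^{n-2\gamma}$ with the sign that \emph{decreases} the energy; since $A_0>0$ by assumption, one gets $E(u_{(x,\varepsilon)})<Y_\gamma(\mathbb{S}^n)$ for all sufficiently small $\varepsilon$, as claimed.
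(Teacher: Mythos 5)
Your proposal tracks the paper's own proof closely: \eqref{eq: fractional conformal Lapalcian interaction term estimate}, \eqref{eq: nonlinearity interaction term estimate} and \eqref{eq: Main interaction of bubble term estimate} are handled by the interaction and self-action lemmas of Mayer--Ndiaye, \eqref{eq: energy for Schoen's bubble estimation} by the Green's function/positive-mass expansion in their Lemma 4.2 and (18), and \eqref{eq: L 2n / n+2gamma estimate on bubble} by comparing the ``glue-and-extend'' extension $\overline{\varphi_{x,\varepsilon}}$ against the ``extend-and-glue'' version $\widetilde{\varphi_{x,\varepsilon}}$ and controlling the discrepancy with the barrier/maximum-principle gradient estimate of Appendix C together with the Mayer--Ndiaye Poisson kernel asymptotics. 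You correctly single out \eqref{eq: L 2n / n+2gamma estimate on bubble} as the crux and name the exact two ingredients the paper deploys there, so the approach is essentially the same.
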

\begin{proof}
    The interaction estimates \eqref{eq: fractional conformal Lapalcian interaction term estimate} and \eqref{eq: nonlinearity interaction term estimate} follows from \cite[Lemma 3.5]{mayer2024fractional}. \eqref{eq: Main interaction of bubble term estimate} is from \cite[Lemma 3.5]{mayer2024fractional} and \cite[Lemma 4.2]{mayer2024fractional}. \eqref{eq: energy for Schoen's bubble estimation} is taken from the selfaction estimate \cite[Lemma 4.2]{mayer2024fractional}, with \cite[(18)]{mayer2024fractional}. Finally, \eqref{eq: L 2n / n+2gamma estimate on bubble} follows from maximum principle argument on the extension domain (for the detail, see Appendix \ref{appendix C}).
\end{proof}

\section{The case \texorpdfstring{$u_\infty\equiv 0$}{u\_infty=0}}

In this section, we always assume $u_\infty\equiv 0$. For every $\nu\in\mathbb{N}$, we denote by $\mathcal{A}_\nu$ the set of all $m$-tuplets $(x_k,\varepsilon_k,\alpha_k)_{1\leq k\leq m}\in (M\times\mathbb{R}_+,\mathbb{R}_+)^m$
such that
$$d(x_k,x_{k,\nu}^*)\leq \varepsilon_{k,\nu}^*,~~\frac{1}{2}\leq\frac{\varepsilon_k}{\varepsilon_{k,\nu}^*}\leq 2,~~
\frac{1}{2}\leq\alpha_k\leq 2$$
for all $1\leq k\leq m$. Moreover, we can find an $m$-tuplet
$(x_{k,\nu},\varepsilon_{k,\nu},\alpha_{k,\nu})_{1\leq k\leq m}\in\mathcal{A}_\nu$ such that
\begin{equation}\label{3.1}
\begin{split}
&\int_M\Big(u_\nu-\sum_{k=1}^m\alpha_{k,\nu}u_{(x_{k,\nu},\varepsilon_{k,\nu})}\Big)P_\gamma^{g_0}\Big(u_\nu-\sum_{k=1}^m\alpha_{k,\nu}u_{(x_{k,\nu},\varepsilon_{k,\nu})}\Big)d\mu_{g_0}\\
&\leq \int_M\Big(u_\nu-\sum_{k=1}^m\alpha_{k}u_{(x_{k},\varepsilon_{k})}\Big)P_\gamma^{g_0}\Big(u_\nu-\sum_{k=1}^m\alpha_{k}u_{(x_{k},\varepsilon_{k})}\Big)d\mu_{g_0}
\end{split}
\end{equation}
for all $(x_k,\varepsilon_k,\alpha_k)_{1\leq k\leq m}\in\mathcal{A}_\nu$.

\begin{prop}\label{prop3.1}
(i) For all $i\neq j$, we have
$$\frac{\varepsilon_{i,\nu}}{\varepsilon_{j,\nu}}+\frac{\varepsilon_{j,\nu}}{\varepsilon_{i,\nu}}
+\frac{d(x_{i,\nu},x_{j,\nu})}{\varepsilon_{i,\nu}\varepsilon_{j,\nu}}\to\infty
$$
as $\nu\to\infty$.\\
(ii) We have
$$u_\nu-\sum_{k=1}^m\alpha_{k,\nu}u_{(x_{k,\nu},\varepsilon_{k,\nu})}\to 0~~\mbox{ in }H^\gamma(M)$$
as $\nu\to\infty$.
\end{prop}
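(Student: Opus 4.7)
My plan is to obtain part (ii) first by direct comparison with a specific competitor in $\mathcal{A}_\nu$, and then to deduce part (i) from the structural constraints defining $\mathcal{A}_\nu$ combined with \eqref{2.5}. Before either step, I would verify that a minimizer in \eqref{3.1} actually exists: the set $\mathcal{A}_\nu$ is nonempty, as it contains $(x_{k,\nu}^*, \varepsilon_{k,\nu}^*, 1)_{1\le k\le m}$, and its closure is compact; the objective depends continuously on the parameters because $(x,\varepsilon)\mapsto u_{(x,\varepsilon)}$ is continuous into $H^\gamma(M)$ and $P_\gamma^{g_0}$ is bounded from $H^\gamma(M)$ to $H^{-\gamma}(M)$.

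For part (ii), since $u_\infty\equiv 0$, Lemma \ref{lem2.1}(iii) yields
$$u_\nu - \sum_{k=1}^m u_{(x_{k,\nu}^*, \varepsilon_{k,\nu}^*)} \to 0 \quad\text{in } H^\gamma(M).$$
Testing \eqref{3.1} with the competitor $(x_{k,\nu}^*, \varepsilon_{k,\nu}^*, 1)\in\mathcal{A}_\nu$ and invoking the equivalence of the quadratic form $v\mapsto\int_M v\, P_\gamma^{g_0} v\, d\mu_{g_0}$ with $\|v\|_{H^\gamma(M)}^2$ (a consequence of $Y_\gamma(M,[g_0])>0$ together with the fractional Sobolev embedding), one sees that the right-hand side of \eqref{3.1} tends to zero. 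By \eqref{3.1} and the same norm equivalence, the left-hand side tends to zero as well, which proves (ii).

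For part (i), I would argue by contradiction: if along a subsequence all three quantities $\varepsilon_{i,\nu}/\varepsilon_{j,\nu}$, $\varepsilon_{j,\nu}/\varepsilon_{i,\nu}$ and $d(x_{i,\nu},x_{j,\nu})/(\varepsilon_{i,\nu}\varepsilon_{j,\nu})$ remained bounded, the constraints $\tfrac{1}{2}\le \varepsilon_{k,\nu}/\varepsilon_{k,\nu}^*\le 2$ together with $d(x_{k,\nu},x_{k,\nu}^*)\le\varepsilon_{k,\nu}^*$ would force the same boundedness for the corresponding starred quantities, after absorbing additive errors of order $\varepsilon_{i,\nu}^*+\varepsilon_{j,\nu}^*$ in the distance term via the triangle inequality. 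This would contradict \eqref{2.5}. I expect the chief technical obstacle to lie in (ii): rigorously establishing the norm equivalence between $\int_M v P_\gamma^{g_0} v\, d\mu_{g_0}$ and $\|v\|_{H^\gamma(M)}^2$ uniformly along the sequence, since the conformal fractional Laplacian is nonlocal and only positive under the standing hypothesis $Y_\gamma(M,[g_0])>0$. The case analysis in (i) is then routine, but has to be carried out carefully to treat both regimes of \eqref{2.5} — ratios blowing up versus the distance term blowing up — because they produce qualitatively different comparisons between starred and unstarred parameters.
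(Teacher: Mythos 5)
Your proof follows the paper's argument essentially verbatim: for part (ii) the paper tests the minimality \eqref{3.1} against the competitor $(x_{k,\nu}^*,\varepsilon_{k,\nu}^*,1)\in\mathcal{A}_\nu$ and combines Lemma~\ref{lem2.1}(iii) (with $u_\infty\equiv 0$) with the equivalence between $v\mapsto\int_M v\,P_\gamma^{g_0}v\,d\mu_{g_0}$ and $\|v\|_{H^\gamma(M)}^2$ guaranteed by $Y_\gamma(M,[g_0])>0$, and for part (i) it simply cites \cite[Proposition~5.1(i)]{Brendle}, which is precisely the contradiction argument you sketch. One small caution: the triangle-inequality absorption step you describe closes only when the separation quantity carries $d(x_{i,\nu},x_{j,\nu})^2/(\varepsilon_{i,\nu}\varepsilon_{j,\nu})$ (as in Brendle) — with a first power of $d$ as printed in \eqref{2.5}, the additive error $(\varepsilon_{i,\nu}^*+\varepsilon_{j,\nu}^*)/(\varepsilon_{i,\nu}^*\varepsilon_{j,\nu}^*)$ would be unbounded — so one should read \eqref{2.5} and the conclusion of (i) with the squared distance.
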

\begin{proof}
Part (i) is identical to the proof of \cite[Proposition 5.1(i)]{Brendle}.
For Part (ii), it follows from definition of $(x_{k,\nu},\varepsilon_{k,\nu},\alpha_{k,\nu})_{1\leq k\leq m}$
that
\begin{equation*}
\begin{split}
&\int_M\Big(u_\nu-\sum_{k=1}^m\alpha_{k,\nu}u_{(x_{k,\nu},\varepsilon_{k,\nu})}\Big)P_\gamma^{g_0}\Big(u_\nu-\sum_{k=1}^m\alpha_{k,\nu}u_{(x_{k,\nu},\varepsilon_{k,\nu})}\Big)d\mu_{g_0}\\
&\leq \int_M\Big(u_\nu-\sum_{k=1}^m u_{(x_{k,\nu}^*,\varepsilon_{k,\nu}^*)}\Big)P_\gamma^{g_0}\Big(u_\nu-\sum_{k=1}^m u_{(x_{k,\nu}^*,\varepsilon_{k,\nu}^*)}\Big)d\mu_{g_0}.
\end{split}
\end{equation*}
By Lemma \ref{lem2.1}, the expression on the right hand side tends to $0$
as $\nu\to\infty$. This proves the assertion.
\end{proof}

\begin{prop}\label{prop3.2}
We have
$$d(x_{k,\nu},x_{k,\nu}^*)=o(1)\varepsilon_{k,\nu}^*,~~\frac{\varepsilon_k}{\varepsilon_{k,\nu}^*}=1+o(1),~~
\alpha_k=1+o(1)$$
for all $1\leq k\leq m$. In particular,
$(x_{k,\nu},\varepsilon_{k,\nu},\alpha_{k,\nu})_{1\leq k\leq m}$
is an interior point of $\mathcal{A}_\nu$ if $\nu$ is sufficiently large.
\end{prop}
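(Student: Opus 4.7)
The plan is to follow the strategy of \cite[Proposition 5.2]{Brendle}, replacing local computations with the fractional bubble interaction estimates of Proposition \ref{prop: estimation of interaction of bubbles}. The goal is to show that the minimizing tuple is asymptotically $H^\gamma$-close to the starred tuple, after which the interior property is automatic from the admissibility constraints defining $\mathcal{A}_\nu$.

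The first step is to combine Proposition \ref{prop3.1}(ii) with the decomposition of Lemma \ref{lem2.1} and the standing hypothesis $u_\infty \equiv 0$ to obtain
\[
    \sum_{k=1}^m \bigl[\alpha_{k,\nu}\, u_{(x_{k,\nu},\varepsilon_{k,\nu})} - u_{(x_{k,\nu}^*,\varepsilon_{k,\nu}^*)}\bigr] \to 0 \quad \text{in}\ H^\gamma(M).
\]
The second step is a decoupling: expanding the squared $H^\gamma$-norm using the bilinear form $(f,h)\mapsto \int_M f\, P_\gamma^{g_0} h\, d\mu_{g_0}$, the off-diagonal terms involve pairings of bubbles indexed by $i \neq j$, where the arguments are mixtures of $(x_{\cdot,\nu},\varepsilon_{\cdot,\nu})$ and $(x_{\cdot,\nu}^*,\varepsilon_{\cdot,\nu}^*)$. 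By admissibility of $\mathcal{A}_\nu$, the parameters within each index differ by bounded factors, so the four types of cross pairings (starred-starred, starred-unstarred, and so on) all satisfy a separation condition of the form \eqref{2.5}, using Proposition \ref{prop3.1}(i) for the unstarred-unstarred case. The interaction estimate \eqref{eq: fractional conformal Lapalcian interaction term estimate} then forces each cross pairing to be $o(1)$, and consequently, for every $k$,
\[
    \alpha_{k,\nu}\, u_{(x_{k,\nu},\varepsilon_{k,\nu})} - u_{(x_{k,\nu}^*,\varepsilon_{k,\nu}^*)} \to 0 \quad \text{in}\ H^\gamma(M).
\]

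The third step is a blow-up around $x_{k,\nu}^*$ at scale $\varepsilon_{k,\nu}^*$. In normal coordinates, the rescaled starred bubble $(\varepsilon_{k,\nu}^*)^{(n-2\gamma)/2} u_{(x_{k,\nu}^*,\varepsilon_{k,\nu}^*)}(\exp_{x_{k,\nu}^*}(\varepsilon_{k,\nu}^* y))$ converges to the standard Aubin--Talenti profile $U_{0,1}(y) = (1+|y|^2)^{-(n-2\gamma)/2}$ on $\mathbb{R}^n$, using the asymptotics of Schoen's bubble in \cite{mayer2024fractional} (see also Appendix \ref{Appendix B}). Passing to a subsequence, the rescaled $\alpha_{k,\nu} u_{(x_{k,\nu},\varepsilon_{k,\nu})}$ converges to $\alpha\, U_{y_0,\lambda}$ with $\alpha,\lambda \in [1/2,2]$ and $|y_0| \leq 1$ by admissibility. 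Since the limiting difference must vanish in $H^\gamma(\mathbb{R}^n)$ and the parametrization $(y_0,\lambda,\alpha) \mapsto \alpha U_{y_0,\lambda}$ is an injective immersion, we conclude $(y_0,\lambda,\alpha) = (0,1,1)$. Since every convergent subsequence of the rescaled parameters yields the same limit and the parameters lie in a compact set, the full sequences converge, producing the claimed asymptotics and hence interiority of $(x_{k,\nu},\varepsilon_{k,\nu},\alpha_{k,\nu})_{1\leq k\leq m}$ in $\mathcal{A}_\nu$ for large $\nu$.

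The main obstacle is the decoupling step. The interaction estimate \eqref{eq: fractional conformal Lapalcian interaction term estimate} is stated for Schoen bubbles centered on the manifold, and extending it to the mixed pairs requires verifying that bounded perturbations of the parameters preserve the divergence in the separation condition \eqref{2.5}. Because Schoen's bubble is built by glueing the Euclidean profile to Green's function, one must also check that the Green-function tails contribute only lower-order terms to the cross-interactions. This nonlocal bookkeeping is what replaces the use of pointwise product rules in Brendle's original local argument.
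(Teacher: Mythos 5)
Your proposal is correct and follows the same route as the paper: the paper's own proof records only the triangle inequality step that yields $\bigl\|\sum_{k}\alpha_{k,\nu}u_{(x_{k,\nu},\varepsilon_{k,\nu})}-\sum_{k}u_{(x_{k,\nu}^*,\varepsilon_{k,\nu}^*)}\bigr\|_{H^\gamma(M)}=o(1)$, and then states ``from this, the assertion follows.'' Your steps 2 and 3 — decoupling via the bubble interaction estimates and a blow-up identification of the limit parameters — are exactly the argument (borrowed from Brendle's local counterpart) that the paper invokes silently, so you have simply made the omitted details explicit.
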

\begin{proof}
We find
\begin{equation*}
\begin{split}
&\Bigg\|\sum_{k=1}^m\alpha_{k,\nu}u_{(x_{k,\nu},\varepsilon_{k,\nu})}-\sum_{k=1}^m u_{(x_{k,\nu}^*,\varepsilon_{k,\nu}^*)}\Bigg\|_{H^\gamma(M)}\\
&\leq \Bigg\|u_\nu-\sum_{k=1}^m\alpha_{k,\nu}u_{(x_{k,\nu},\varepsilon_{k,\nu})}\Bigg\|_{H^\gamma(M)}
+\Bigg\|u_\nu-\sum_{k=1}^m u_{(x_{k,\nu}^*,\varepsilon_{k,\nu}^*)}\Bigg\|_{H^\gamma(M)}=o(1)
\end{split}
\end{equation*}
by Lemma \ref{lem2.1} and Proposition \ref{prop3.1}.
From this, the assertion follows.
\end{proof}

In the sequel, by rearrangement, we assume that
$$\varepsilon_{i,\nu}\leq \varepsilon_{j,\nu}\quad\mbox{for}\quad i\leq j.$$
We now decompose the function $u_\nu$ as
$$u_\nu=v_\nu+w_\nu,$$
where
$$v_\nu=\sum_{k=1}^m\alpha_{k,\nu}u_{(x_{k,\nu},\varepsilon_{k,\nu})},$$
and
$$w_\nu=u_\nu-\sum_{k=1}^m\alpha_{k,\nu}u_{(x_{k,\nu},\varepsilon_{k,\nu})}.$$
It follows from Proposition \ref{prop3.1} that 
$$\int_Mw_\nu P_\gamma^{g_0} w_\nu d\mu_{g_0}=o(1).$$

\begin{prop}\label{prop: some estimates on u^{n+2gamma/n-2gamma}w_nu}
(i) For every $1\leq k\leq m$, we have
$$\left|\int_Mu_{(x_{k,\nu},\varepsilon_{k,\nu})}^{\frac{n+2\gamma}{n-2\gamma}}w_\nu d\mu_{g_0}\right|\leq o(1)\|w_\nu\|_{H^\gamma(M)}.$$
(ii) For every $1\leq k\leq m$, we have
$$\left|\int_Mu_{(x_{k,\nu},\varepsilon_{k,\nu})}^{\frac{n+2\gamma}{n-2\gamma}}\frac{\varepsilon_{k,\nu}^2-d(x_{k,\nu},x)^2}{\varepsilon_{k,\nu}^2+d(x_{k,\nu},x)^2}w_\nu d\mu_{g_0}\right|\leq o(1)\|w_\nu\|_{H^\gamma(M)}.$$
(iii) For all $1\leq k\leq m$,
$$\left|\int_Mu_{(x_{k,\nu},\varepsilon_{k,\nu})}^{\frac{n+2\gamma}{n-2\gamma}}\frac{\varepsilon_{k,\nu}\exp^{-1}_{x_{k,\nu}}(x)}{\varepsilon_{k,\nu}^2+d(x_{k,\nu},x)^2}w_\nu d\mu_{g_0}\right|\leq o(1)\|w_\nu\|_{H^\gamma(M)}.$$
\end{prop}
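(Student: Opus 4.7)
The plan is to exploit the fact that, by Proposition \ref{prop3.2}, the parameters $(x_{k,\nu},\varepsilon_{k,\nu},\alpha_{k,\nu})$ lie in the interior of $\mathcal{A}_\nu$ for $\nu$ large, so they constitute an interior critical point of the quadratic functional on the right-hand side of \eqref{3.1}. Differentiating in $\alpha_k$, in $\varepsilon_k$, and tangentially in $x_k$ at this critical point, and using the self-adjointness of $P_\gamma^{g_0}$, I obtain three orthogonality relations
\[
\int_M w_\nu\, P_\gamma^{g_0} u_{(x_{k,\nu},\varepsilon_{k,\nu})}\, d\mu_{g_0} = 0,
\]
\[
\int_M w_\nu\, P_\gamma^{g_0} \bigl(\varepsilon_{k,\nu}\,\partial_{\varepsilon_{k,\nu}} u_{(x_{k,\nu},\varepsilon_{k,\nu})}\bigr)\, d\mu_{g_0} = 0,
\]
together with the analogous tangential identity for $\nabla_{x_{k,\nu}} u_{(x_{k,\nu},\varepsilon_{k,\nu})}$.

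Next, I would convert each orthogonality condition into the desired bound via a profile identity. On the Euclidean model $\bigl(\varepsilon/(\varepsilon^2+|x|^2)\bigr)^{(n-2\gamma)/2}$, a direct computation gives
\[
\varepsilon\,\partial_\varepsilon u_\varepsilon = -\frac{n-2\gamma}{2}\cdot\frac{\varepsilon^2-|x|^2}{\varepsilon^2+|x|^2}\,u_\varepsilon, \qquad \varepsilon\,\nabla_x u_\varepsilon = (n-2\gamma)\,\frac{\varepsilon\,x}{\varepsilon^2+|x|^2}\,u_\varepsilon,
\]
so that the three test functions appearing in (i)-(iii), multiplied by $u_\varepsilon^{(n+2\gamma)/(n-2\gamma)}$, are (up to explicit constants) exactly $P_\gamma$ applied to $u_\varepsilon$, $\varepsilon\partial_\varepsilon u_\varepsilon$, and $\varepsilon\nabla_x u_\varepsilon$ respectively. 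For Schoen's bubble on $(M,g_0)$, I would prove analogs of \eqref{eq: L 2n / n+2gamma estimate on bubble}, i.e.\ that each of these three quantities matches its model profile times $s_\infty$ with $L^{2n/(n+2\gamma)}$ error $o(1)$. Granting this, the three statements follow from the orthogonalities above combined with H\"older's inequality and the Sobolev embedding $H^\gamma(M)\hookrightarrow L^{2n/(n-2\gamma)}(M)$: for (i),
\[
s_\infty \int_M u_{(x_{k,\nu},\varepsilon_{k,\nu})}^{\frac{n+2\gamma}{n-2\gamma}} w_\nu\, d\mu_{g_0} = \int_M \Bigl(s_\infty u_{(x_{k,\nu},\varepsilon_{k,\nu})}^{\frac{n+2\gamma}{n-2\gamma}} - P_\gamma^{g_0} u_{(x_{k,\nu},\varepsilon_{k,\nu})}\Bigr) w_\nu\, d\mu_{g_0} \le o(1)\,\|w_\nu\|_{H^\gamma(M)},
\]
and (ii)-(iii) are identical once the analogous profile identity is substituted.

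The hard part is establishing the $L^{2n/(n+2\gamma)}$ asymptotics for $P_\gamma^{g_0}$ applied to the $\varepsilon$- and $x$-derivatives of Schoen's bubble, i.e.\ the analogs of \eqref{eq: L 2n / n+2gamma estimate on bubble} for the linearized symmetries. In the local case $\gamma=1$ treated in \cite{Brendle}, these identities follow by differentiating pointwise relations, but in our nonlocal setting we must work on the extension manifold and repeat the maximum-principle argument of Appendix \ref{appendix C}. Differentiating Schoen's construction in $\varepsilon$ or in $x$ preserves the cutoff structure, so the extensions of $\varepsilon\partial_\varepsilon u_{(x,\varepsilon)}$ and $\varepsilon\nabla_x u_{(x,\varepsilon)}$ satisfy degenerate elliptic equations of the same type with controllable source terms; combined with the Poisson-kernel asymptotics of \cite{mayer2021asymptotics,mayer2024fractional} and the Positive Mass assumption, the same pattern of proof should yield the required $o(1)$ decay.
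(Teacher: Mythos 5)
Your proposal follows the same route as the paper: you derive the three orthogonality relations by differentiating the minimization functional in $\alpha_k$, $\varepsilon_k$, and $x_k$ at the interior critical point, use the explicit profile identities for $\varepsilon\partial_\varepsilon$ and $\varepsilon\nabla_x$ of the standard bubble, and then conclude via the $L^{2n/(n+2\gamma)}$ estimate \eqref{eq: L 2n / n+2gamma estimate on bubble} combined with H\"older and Sobolev. You are more explicit than the paper's terse ``the remaining statements follow similarly'' in flagging that (ii) and (iii) require linearized analogs of \eqref{eq: L 2n / n+2gamma estimate on bubble} for the $\varepsilon$- and $x$-derivatives of Schoen's bubble, which is a real (if routine) technical ingredient that the paper leaves implicit.
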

\begin{proof}
By definition of $(x_{k,\nu},\varepsilon_{k,\nu},\alpha_{k,\nu})_{1\leq k\leq m}$, by taking derivative with respect to $\alpha_k$, we have 
$$\int_Mw_\nu P_\gamma^{g_0}u_{(x_{k,\nu},\varepsilon_{k,\nu})} d\mu_{g_0}=0,$$
for all $1\leq k\leq m$. From \eqref{eq: L 2n / n+2gamma estimate on bubble}
$$\Big\|P_\gamma^{g_0}u_{(x_{k,\nu},\varepsilon_{k,\nu})}-s_\infty u_{(x_{k,\nu},\varepsilon_{k,\nu})}^{\frac{n+2\gamma}{n-2\gamma}}\Big\|_{L^{\frac{2n}{n+2\gamma}}(M)}=o(1),$$
we conclude that
\begin{equation*}
\begin{split}
&\left|s_\infty \int_Mu_{(x_{k,\nu},\varepsilon_{k,\nu})}^{\frac{n+2\gamma}{n-2\gamma}}w_\nu d\mu_{g_0}\right|\\
&=\left|\int_M\Big(P_\gamma^{g_0}u_{(x_{k,\nu},\varepsilon_{k,\nu})}-s_\infty u_{(x_{k,\nu},\varepsilon_{k,\nu})}^{\frac{n+2\gamma}{n-2\gamma}}\Big)w_\nu d\mu_{g_0}\right|\\
&\leq \Big\|P_\gamma^{g_0}u_{(x_{k,\nu},\varepsilon_{k,\nu})}-s_\infty u_{(x_{k,\nu},\varepsilon_{k,\nu})}^{\frac{n+2\gamma}{n-2\gamma}}\Big\|_{L^{\frac{2n}{n+2\gamma}}(M)}
\|w_\nu\|_{L^{\frac{2n}{n-2\gamma}}(M)}\\
&\leq o(1)\|w_\nu\|_{H^\gamma(M)}
\end{split}
\end{equation*}
for all $1\leq k\leq m$. This proves (i).

The remaining statements follow similarly, taking derivative with respect to $\varepsilon_{k,\nu}$ and $x_{k,\nu}$, for (ii) and (iii), respectively. 
\end{proof}

\begin{prop}\label{prop: estimate on u^{4gamma/n-2gamma}w_nu^2}
If $\nu$ is sufficiently large, then we have 
\begin{equation*}
\frac{n+2\gamma}{n-2\gamma}s_\infty\int_M\sum_{j=1}^m  u_{(x_{j,\nu},\varepsilon_{j,\nu})}^{\frac{4\gamma}{n-2\gamma}}w_\nu^2 d\mu_{g_0}
\leq (1-c)\int_M w_\nu P_\gamma^{g_0} w_\nu d\mu_{g_0}
\end{equation*}
for some positive constant $c$ independent of $\nu$. 
\end{prop}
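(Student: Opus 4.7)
I will argue by contradiction. Suppose the inequality fails for every $c>0$; then, passing to a subsequence, after renormalizing so that $\int_M w_\nu P_\gamma^{g_0} w_\nu\,d\mu_{g_0}=1$, we have
\[
\frac{n+2\gamma}{n-2\gamma} s_\infty \int_M \sum_{j=1}^m u_{(x_{j,\nu},\varepsilon_{j,\nu})}^{\frac{4\gamma}{n-2\gamma}} w_\nu^2\,d\mu_{g_0} \geq 1 - o(1).
\]
The overall strategy is the standard spectral/coercivity argument in the spirit of Brendle~\cite{Brendle}: rescale $w_\nu$ at each bubble scale, identify the weak limits as lying in the positive eigenspace of the linearized Euclidean operator, and conclude by a spectral gap.

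Concretely, for each $k$ I would set $\phi_{k,\nu}(y):=\varepsilon_{k,\nu}^{(n-2\gamma)/2} w_\nu(\exp_{x_{k,\nu}}(\varepsilon_{k,\nu} y))$ on balls of radius $\to\infty$. Proposition~\ref{prop3.1}(i) ensures that the different bubble scales are asymptotically disjoint, so the $\dot H^\gamma(\mathbb{R}^n)$ norms of the $\phi_{k,\nu}$ are uniformly bounded and (up to subsequence) $\phi_{k,\nu}\rightharpoonup \phi_{k,\infty}$. Using \eqref{eq: L 2n / n+2gamma estimate on bubble}, the rescaled bubble profiles converge to the standard fractional bubble $U$ satisfying $(-\Delta)^\gamma U = s_\infty U^{(n+2\gamma)/(n-2\gamma)}$. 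The three orthogonality identities of Proposition~\ref{prop: some estimates on u^{n+2gamma/n-2gamma}w_nu}, obtained by differentiating \eqref{3.1} in the parameters $\alpha_k$, $\varepsilon_k$, $x_k$, pass to the limit to give
\[
\int_{\mathbb{R}^n} U^{\frac{n+2\gamma}{n-2\gamma}}\phi_{k,\infty}\,dy = 0,\quad \int_{\mathbb{R}^n} U^{\frac{n+2\gamma}{n-2\gamma}}\tfrac{1-|y|^2}{1+|y|^2}\phi_{k,\infty}\,dy = 0,\quad \int_{\mathbb{R}^n} U^{\frac{n+2\gamma}{n-2\gamma}}\tfrac{y}{1+|y|^2}\phi_{k,\infty}\,dy = 0.
\]
These are precisely orthogonality of $\phi_{k,\infty}$ against $U$, the scaling direction $\tfrac{n-2\gamma}{2} U + y\cdot\nabla U$, and the translations $\partial_i U$.

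Now the linearized operator $L\phi := (-\Delta)^\gamma \phi - \tfrac{n+2\gamma}{n-2\gamma} s_\infty U^{\frac{4\gamma}{n-2\gamma}}\phi$ on $\dot H^\gamma(\mathbb{R}^n)$ has exactly one negative eigenfunction (namely $U$, since $LU = -\tfrac{4\gamma}{n-2\gamma} s_\infty U^{(n+2\gamma)/(n-2\gamma)}$) and an $(n+1)$-dimensional kernel spanned by the translation and dilation modes of the Euclidean bubble family, with positive spectrum bounded below by some $\lambda_+ > 0$. Hence on the orthogonal complement of these $n+2$ modes with respect to the weighted $L^2(U^{4\gamma/(n-2\gamma)}\,dy)$ inner product, one has the gap
\[
\int_{\mathbb{R}^n}|(-\Delta)^{\gamma/2}\phi|^2\,dy \geq (1+c_0)\,\tfrac{n+2\gamma}{n-2\gamma}\,s_\infty\int_{\mathbb{R}^n} U^{\frac{4\gamma}{n-2\gamma}}\phi^2\,dy
\]
for some universal $c_0>0$. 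Applying this to each $\phi_{k,\infty}$ and summing, weak lower semicontinuity of the Dirichlet energy together with the almost-orthogonal decomposition of both the Dirichlet and the weighted $L^2$ mass of $w_\nu$ among the bubbles yields
\[
1 = \int_M w_\nu P_\gamma^{g_0} w_\nu \geq (1+c_0)\,\tfrac{n+2\gamma}{n-2\gamma}\,s_\infty\,\int_M \sum_j u_{(x_{j,\nu},\varepsilon_{j,\nu})}^{\frac{4\gamma}{n-2\gamma}} w_\nu^2 + o(1) \geq 1 + c_0 + o(1),
\]
a contradiction.

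The main technical obstacle is the nonlocal concentration-compactness step: showing, in the fractional setting, that both the Dirichlet energy $\int w_\nu P_\gamma^{g_0} w_\nu$ and the weighted $L^2$-mass $\sum_j \int u_{(x_{j,\nu},\varepsilon_{j,\nu})}^{4\gamma/(n-2\gamma)} w_\nu^2$ split cleanly, up to $o(1)$, into contributions coming from the individual bubble scales, with no residual cross-interaction. Unlike the local case, pointwise cut-offs do not commute with $P_\gamma^{g_0}$, so this has to be done on the Caffarelli--Silvestre extension using radial cut-offs at well-separated scales $\varepsilon_{i,\nu} \ll \varepsilon_{j,\nu}$ (combined with Proposition~\ref{prop3.1}(i) and the interaction estimates \eqref{eq: fractional conformal Lapalcian interaction term estimate}--\eqref{eq: Main interaction of bubble term estimate} for the cross terms involving the bubbles themselves).
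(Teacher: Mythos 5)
Your overall spectral idea is the same as the paper's: argue by contradiction, rescale the error $w_\nu$ at the bubble scales, and exploit a spectral gap for the linearized operator $(-\Delta)^\gamma-\frac{n+2\gamma}{n-2\gamma}s_\infty U^{4\gamma/(n-2\gamma)}$ using the orthogonality relations from Proposition~\ref{prop: some estimates on u^{n+2gamma/n-2gamma}w_nu}. However, your \emph{architecture} differs from the paper's in a way that leaves a genuine gap.

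You propose to rescale at every bubble scale $k=1,\dots,m$, pass to the weak limits $\phi_{k,\infty}$, apply the gap inequality to each, and then sum. For the summation step to close, you need the Dirichlet form $\int_M w_\nu P_\gamma^{g_0}w_\nu\,d\mu_{g_0}$ to decompose, up to $o(1)$, into disjoint contributions at the $m$ well-separated scales. You explicitly flag this as ``the main technical obstacle'' and state that it ``has to be done on the Caffarelli--Silvestre extension,'' but you do not carry it out. In the nonlocal setting, pointwise cut-offs do not commute with $P_\gamma^{g_0}$, and the cross terms between annuli at scales $\varepsilon_{i,\nu}\ll\varepsilon_{j,\nu}$ in the quadratic form require a separate estimate (the interaction bounds \eqref{eq: fractional conformal Lapalcian interaction term estimate}--\eqref{eq: Main interaction of bubble term estimate} cover only the bubble--bubble cross terms, not the $w_\nu$--$w_\nu$ cross terms of the quadratic form). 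So the step from the per-bubble gaps to the displayed contradiction $1\geq 1+c_0+o(1)$ is unproven.

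The paper avoids this difficulty entirely with a pigeonhole argument. From the normalization \eqref{64} and the contradiction hypothesis \eqref{65}, it extracts a \emph{single} index $j$ for which both \eqref{69} (positive weighted $L^2$-mass at that scale) and \eqref{70} (the localized Dirichlet integral on the annular region $\Omega_{j,\nu}$ is dominated by the weighted $L^2$-mass at that scale) hold. It then rescales only at that one scale, obtains \eqref{3.8}--\eqref{3.10} for a single limit $\hat{w}$, and invokes the spectral rigidity lemma in \cite[Lemma~5.1]{ChanSireSun} to get $\hat{w}\equiv 0$, contradicting \eqref{3.8}. This sidesteps any need for a global energy splitting. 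To repair your proof you would either have to prove the energy-splitting lemma you sketch (nontrivial and not needed), or replace the summation-over-$k$ step with the paper's pigeonhole reduction to one scale.
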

\begin{proof}
Suppose this is not true. Upon rescaling (as in \cite[Lemma 5.2]{ChanSireSun}), we obtain a sequence of functions $\{\widetilde{w}_\nu:\nu\in\mathbb{N}\}$ such that
\begin{equation}\label{64}
\int_M\widetilde{w}_\nu P_\gamma^{g_0} \widetilde{w}_\nu d\mu_{g_0}=1
\end{equation}
and
\begin{equation}\label{65}
\lim_{\nu\to\infty}\frac{n+2\gamma}{n-2\gamma}s_\infty\int_M\sum_{j=1}^m  u_{(x_{j,\nu},\varepsilon_{j,\nu})}^{\frac{4\gamma}{n-2\gamma}}\widetilde{w}_\nu^2 d\mu_{g_0}
\geq 1.
\end{equation}
From (\ref{64}), we have
\begin{equation}\label{66}
\int_M|\widetilde{w}_\nu|^{\frac{2n}{n-2\gamma}} d\mu_{g_0}
\leq Y_\gamma(M,[g_0])^{-\frac{n}{n-2\gamma}}.
\end{equation}
In view of Lemma \ref{lem2.1}, denote
\[
    N_\nu^2:=\min\left\{\frac{\varepsilon_{2,\nu}}{\varepsilon_{1,\nu}}, \dots, \frac{\varepsilon_{m,\nu}}{\varepsilon_{m-1,\nu}}\right\}.
\]
Then $N_\nu\to\infty$, $N_\nu\varepsilon_{j,\nu}\to 0$ for all $1\leq j\leq m$, and 
\begin{equation}\label{67}
\frac{1}{N_\nu}\frac{\varepsilon_{j,\nu}+d(x_{i,\nu},x_{j,\nu})}{\varepsilon_{i,\nu}}\to\infty
\end{equation}
for all $i<j$. 
$$\Omega_{j,\nu}=B_{N_\nu\varepsilon_{j,\nu}}(x_{j,\nu})\setminus\bigcup_{i=1}^{j-1} B_{N_\nu\varepsilon_{i,\nu}}(x_{i,\nu})$$
for every $1\leq j\leq m$. In view of (\ref{64}) and (\ref{65}), we can find an integer $1\leq j\leq m$ such that
\begin{equation}\label{69}
\lim_{\nu\to\infty}\int_M u_{(x_{j,\nu},\varepsilon_{j,\nu})}^{\frac{4\gamma}{n-2\gamma}}\widetilde{w}_\nu^2 d\mu_{g_0}>0
\end{equation}
and
\begin{equation}\label{70}
\lim_{\nu\to\infty}\int_{\Omega_{j,\nu}}\widetilde{w}_\nu P_\gamma^{g_0} \widetilde{w}_\nu d\mu_{g_0}
\leq \lim_{\nu\to\infty}\frac{n+2\gamma}{n-2\gamma}s_\infty\int_M u_{(x_{j,\nu},\varepsilon_{j,\nu})}^{\frac{4\gamma}{n-2\gamma}}\widetilde{w}_\nu^2 d\mu_{g_0}.
\end{equation}
We now define a sequence of functions $\hat{w}_\nu: T_{x_{j,\nu}}M\to\mathbb{R}$ by
$$\hat{w}_\nu(\xi)=\varepsilon_{j,\nu}^{\frac{n-2\gamma}{2}}\widetilde{w}_\nu(\exp_{x_{j,\nu}}(\varepsilon_{j,\nu}\xi))$$
for $\xi\in T_{x_{j,\nu}}M$.
The sequence $\{\hat{w}_\nu:\nu\in\mathbb{N}\}$ satisfies 
$$\lim_{\nu\to\infty}\int_{\{\xi\in T_{x_{j,\nu}}M: |\xi|\leq N_\nu\}}\hat{w}_\nu(\xi)(-\Delta_{\mathbb{R}^n})^\gamma \hat{w}_\nu(\xi)d\xi
\leq 1$$
and
$$\lim_{\nu\to\infty}\int_{\{\xi\in T_{x_{j,\nu}}M: |\xi|\leq N_\nu\}}|\hat{w}_\nu(\xi)|^{\frac{2n}{n-2\gamma}}d\xi\leq Y_\gamma(M,[g_0])^{-\frac{n}{n-2\gamma}}.$$
Hence, if we take the weak limit as $\nu\to\infty$
such that $\hat{w}_\nu \rightharpoonup \hat{w}$ in $H_\textup{loc}^\gamma(\mathbb{R}^n)$,
we then obtain a function $\hat{w}:\mathbb{R}^n\to\mathbb{R}$ such that
\begin{equation}\label{3.8}
\int_{\mathbb{R}^n}\frac{\hat{w}(\xi)^2}{(1+|\xi|^2)^{2\gamma}}d\xi>0
\end{equation}
and 
\begin{equation}\label{3.9}
\int_{\mathbb{R}^n}\hat{w}(\xi) (-\Delta_{\mathbb{R}^n})^\gamma\hat{w}(\xi)d\xi\leq\alpha_{n,\gamma}^{\frac{4\gamma}{n-2\gamma}}\frac{n+2\gamma}{n-2\gamma}
\int_{\mathbb{R}^n}\frac{\hat{w}(\xi)^2}{(1+|\xi|^2)^{2\gamma}}d\xi
\end{equation}
where $\alpha_{n,\gamma}$ is a constant given by
\begin{equation}\label{3.11}
\alpha_{n,\gamma}=2^{\frac{n-2\gamma}{2}}\left(\frac{\Gamma(\frac{n}{2}+\gamma)}{\Gamma(\frac{n}{2}-\gamma)}\right)^{\frac{n-2\gamma}{4\gamma}}.
\end{equation}
Moreover, it follows from Proposition \ref{prop: some estimates on u^{n+2gamma/n-2gamma}w_nu} that
\begin{equation}\label{3.10}
\begin{split}
\int_{\mathbb{R}^n}\left(\frac{1}{1+|\xi|^2}\right)^{\frac{n+2\gamma}{2}}\hat{w}(\xi)d\xi&=0,\\
\int_{\mathbb{R}^n}\left(\frac{1}{1+|\xi|^2}\right)^{\frac{n+2\gamma}{2}}\frac{1-|\xi|^2}{1+|\xi|^2}\hat{w}(\xi)d\xi&=0,\\
\int_{\mathbb{R}^n}\left(\frac{1}{1+|\xi|^2}\right)^{\frac{n+2\gamma}{2}}\frac{\xi}{1+|\xi|^2}\hat{w}(\xi)d\xi&=0.
\end{split}
\end{equation}
Since $\hat{w}$ satisfies (\ref{3.9}) and (\ref{3.10}),
we can follow the argument in the proof of \cite[Lemma 5.1]{ChanSireSun}
to conclude that $\hat{w}\equiv 0$, which contradicts to (\ref{3.8}).
This proves the proposition.
\end{proof}

\begin{cor}\label{cor3.5}
 If $\nu$ is sufficiently large, then we have
\begin{equation*}
\frac{n+2\gamma}{n-2\gamma}s_\infty\int_Mv_\nu^{\frac{4\gamma}{n-2\gamma}}w_\nu^2 d\mu_{g_0}
\leq (1-c)\int_M w_\nu P_\gamma^{g_0} w_\nu d\mu_{g_0}
\end{equation*}
for some positive constant $c$ independent of $\nu$.
\end{cor}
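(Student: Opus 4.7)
The plan is to reduce Corollary \ref{cor3.5} directly to Proposition \ref{prop: estimate on u^{4gamma/n-2gamma}w_nu^2} via a comparison between $v_\nu^p$ and $\sum_{j=1}^m u_{(x_{j,\nu},\varepsilon_{j,\nu})}^p$, where $p:=\tfrac{4\gamma}{n-2\gamma}$. The two key inputs are $\alpha_{k,\nu}=1+o(1)$ from Proposition \ref{prop3.2} and the Struwe-type bubble separation from Proposition \ref{prop3.1}(i). In the subadditive range $p\le 1$ (equivalently $n\ge 6\gamma$), the elementary inequality $(a+b)^p\le a^p+b^p$ for $a,b\ge 0$, applied inductively to the $m$-term sum $v_\nu=\sum_{k=1}^m\alpha_{k,\nu}u_{(x_{k,\nu},\varepsilon_{k,\nu})}$, gives the pointwise bound
\[
v_\nu^p \le \sum_{k=1}^m \alpha_{k,\nu}^p\, u_{(x_{k,\nu},\varepsilon_{k,\nu})}^p \le (1+o(1))\sum_{k=1}^m u_{(x_{k,\nu},\varepsilon_{k,\nu})}^p.
\]
Multiplying by $\tfrac{n+2\gamma}{n-2\gamma}s_\infty w_\nu^2$ and integrating, Proposition \ref{prop: estimate on u^{4gamma/n-2gamma}w_nu^2} yields $\tfrac{n+2\gamma}{n-2\gamma}s_\infty\int v_\nu^p w_\nu^2\le(1+o(1))(1-c)\int w_\nu P_\gamma^{g_0}w_\nu$, which is bounded by $(1-c/2)\int w_\nu P_\gamma^{g_0}w_\nu$ for $\nu$ large enough; this proves the corollary with constant $c/2$ in that regime.

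In the superadditive range $p>1$ (only possible when $n<6\gamma$, necessarily in low dimension $n\in\{2,3,4,5\}$), the pointwise inequality fails and I would instead rerun the blow-up contradiction argument of Proposition \ref{prop: estimate on u^{4gamma/n-2gamma}w_nu^2} with $v_\nu^p$ in place of $\sum_j u_{(x_{j,\nu},\varepsilon_{j,\nu})}^p$. Assuming the conclusion fails, the rescaling $\widetilde{w}_\nu$ satisfying the analogues of \eqref{64}--\eqref{65}, the selection of a dominant bubble index $j$ via \eqref{69}--\eqref{70}, the change of variables $\hat{w}_\nu(\xi)=\varepsilon_{j,\nu}^{(n-2\gamma)/2}\widetilde{w}_\nu(\exp_{x_{j,\nu}}(\varepsilon_{j,\nu}\xi))$, and the orthogonality conditions \eqref{3.10} (which come from Proposition \ref{prop: some estimates on u^{n+2gamma/n-2gamma}w_nu} applied to differentiation in $\alpha$, $\varepsilon$, $x$) are all unchanged. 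The only new point is that in these rescaled coordinates $v_\nu$ must converge to the same standard bubble profile as $u_{(x_{j,\nu},\varepsilon_{j,\nu})}$ itself: Proposition \ref{prop3.1}(i) forces each $u_{(x_{k,\nu},\varepsilon_{k,\nu})}$ with $k\neq j$ to tend to $0$ pointwise on every fixed ball in $T_{x_{j,\nu}}M$ (either because $\varepsilon_{k,\nu}/\varepsilon_{j,\nu}+\varepsilon_{j,\nu}/\varepsilon_{k,\nu}\to\infty$, or because the suitably normalized distance $d(x_{j,\nu},x_{k,\nu})^2/(\varepsilon_{j,\nu}\varepsilon_{k,\nu})\to\infty$), and Proposition \ref{prop3.2} gives $\alpha_{j,\nu}\to 1$, so the rescaled $v_\nu^p$ has precisely the limit $\bigl(\tfrac{1}{1+|\xi|^2}\bigr)^{(n+2\gamma)/2}$ used in \eqref{3.9}. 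The nondegeneracy argument via \cite[Lemma 5.1]{ChanSireSun} then forces $\hat w\equiv 0$, contradicting the counterpart of \eqref{3.8}.

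The main obstacle is the second case: one has to verify that the cross contributions from the non-dominant bubbles are negligible in the blow-up limit, uniformly up to the growing cutoff $\{|\xi|\le N_\nu\}$. Locally this is the scale/location dichotomy in Proposition \ref{prop3.1}(i) noted above; globally it is handled by the same cutoff argument as in the proof of Proposition \ref{prop: estimate on u^{4gamma/n-2gamma}w_nu^2}. With this in place, the remainder of the argument carries over verbatim, and the corollary follows with the same positive constant $c$ in both ranges (up to a harmless halving).
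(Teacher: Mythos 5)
Your proposal is correct but takes a genuinely different route from the paper. The paper's proof is a single one-line reduction: it observes that
\[
\int_M\Big|v_\nu^{\frac{4\gamma}{n-2\gamma}}-\sum_{k=1}^m u_{(x_{k,\nu},\varepsilon_{k,\nu})}^{\frac{4\gamma}{n-2\gamma}}\Big|^{\frac{n}{2\gamma}}d\mu_{g_0}=o(1)
\]
(which follows from $\alpha_{k,\nu}=1+o(1)$ and the bubble separation, noting that $\frac{4\gamma}{n-2\gamma}\cdot\frac{n}{2\gamma}=\frac{2n}{n-2\gamma}$ is the critical exponent), and then pairs this with $w_\nu^2\in L^{\frac{n}{n-2\gamma}}(M)$ via H\"older, absorbing the error into the $(1-c)$ factor of Proposition~\ref{prop: estimate on u^{4gamma/n-2gamma}w_nu^2}. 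This is uniform in $p=\frac{4\gamma}{n-2\gamma}$ and does not branch on dimension. Your Case~$1$ ($p\le 1$, i.e.\ $n\ge 6\gamma$) replaces the $L^{\frac{n}{2\gamma}}$ comparison by the elementary pointwise subadditivity $v_\nu^{p}\le(1+o(1))\sum_k u_{(x_{k,\nu},\varepsilon_{k,\nu})}^{p}$, which is simpler and cleaner in that range. Your Case~$2$ ($p>1$, low dimensions) instead reruns the blow-up contradiction of Proposition~\ref{prop: estimate on u^{4gamma/n-2gamma}w_nu^2} with $v_\nu^p$ in place of $\sum_k u_k^p$; this works, but be aware that (a) to select the dominant index $j$ you still need the convexity bound $v_\nu^p\le m^{p-1}\sum_k\alpha_{k,\nu}^pu_k^p$ in order to pass from $\int v_\nu^p\widetilde w_\nu^2\gtrsim 1$ to $\int u_j^p\widetilde w_\nu^2\gtrsim 1$ for some $j$, and (b) your claim that each $u_{(x_{k,\nu},\varepsilon_{k,\nu})}$ with $k\ne j$ tends to $0$ \emph{pointwise} after rescaling at scale $\varepsilon_{j,\nu}$ is not correct when $\varepsilon_{k,\nu}\ll\varepsilon_{j,\nu}$ and $d(x_{k,\nu},x_{j,\nu})\lesssim N_\nu\varepsilon_{j,\nu}$ (the rescaled bubble blows up at a point); this is exactly why Proposition~\ref{prop: estimate on u^{4gamma/n-2gamma}w_nu^2} works on the carved-out domain $\Omega_{j,\nu}$ rather than on the full ball, and you do note at the end that the cutoff handles this. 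Once those points are made precise, the argument closes. In short, the verification you perform in Case~$2$ of what the rescaled $v_\nu^p$ limits to is essentially the same content as the paper's single $L^{n/(2\gamma)}$-closeness estimate, so the paper's route is the more economical one; your Case~$1$ is a genuine simplification in the subadditive regime.
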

\begin{proof}
By the definition of $v_\nu$, we have
$$\int_M\Big|v_\nu^{\frac{4\gamma}{n-2\gamma}}-\sum_{j=1}^m u_{(x_{k,\nu},\varepsilon_{k,\nu})}^{\frac{4\gamma}{n-2\gamma}}\Big|^{\frac{n}{2\gamma}}d\mu_{g_0}=o(1).$$
Hence, the assertion follows from Proposition \ref{prop: estimate on u^{4gamma/n-2gamma}w_nu^2}. 
\end{proof}

In the following, we denote
\begin{equation}\label{48}
F(v)=\frac{\int_MvP_\gamma^{g_0}v d\mu_{g_0}}{\int_M v^{\frac{2n}{n-2\gamma}}d\mu_{g_0}}.
\end{equation}

\begin{prop}\label{prop5.6}
There holds 
$$E(v_\nu)\leq \Big(\sum_{k=1}^mE(u_{(x_{k,\nu},\varepsilon_{k,\nu})})^{\frac{n}{2\gamma}}\Big)^{\frac{2\gamma}{n}}$$
if $\nu$ is sufficiently large.
\end{prop}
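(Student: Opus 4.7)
The plan is to expand both the numerator and denominator of $E(v_\nu)$ in terms of the individual bubbles $u_k^\nu := u_{(x_{k,\nu},\varepsilon_{k,\nu})}$, control the cross terms using Proposition~\ref{prop: estimation of interaction of bubbles}, and close the estimate with Hölder's inequality. Setting $V_k^\nu := \int_M u_k^\nu P_\gamma^{g_0} u_k^\nu \, d\mu_{g_0}$ and $T_k^\nu := \int_M (u_k^\nu)^{2n/(n-2\gamma)} \, d\mu_{g_0}$, so that $E(u_k^\nu) = V_k^\nu (T_k^\nu)^{-(n-2\gamma)/n}$, bilinearity and the self-adjointness of $P_\gamma^{g_0}$ give
\[
\int_M v_\nu P_\gamma^{g_0} v_\nu \, d\mu_{g_0} = \sum_{k=1}^m \alpha_{k,\nu}^2 V_k^\nu + \sum_{k \neq l} \alpha_{k,\nu} \alpha_{l,\nu} \int_M u_k^\nu P_\gamma^{g_0} u_l^\nu \, d\mu_{g_0}.
\]
Combining \eqref{eq: Main interaction of bubble term estimate} and \eqref{eq: nonlinearity interaction term estimate} with Proposition~\ref{prop3.1}(i) (which forces the interaction factor $[(\varepsilon^2+d^2)/(\varepsilon\varepsilon')]^{-(n-2\gamma)/2}$ to tend to $0$) drives every off-diagonal term to $0$. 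For the denominator, the elementary pointwise inequality $(\sum_k a_k)^p \geq \sum_k a_k^p$, valid for $p = 2n/(n-2\gamma) \geq 1$ and nonnegative $a_k$, yields
\[
\int_M v_\nu^{2n/(n-2\gamma)} \, d\mu_{g_0} \geq \sum_k \alpha_{k,\nu}^{2n/(n-2\gamma)} T_k^\nu.
\]

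The algebraic heart of the argument exploits the identity $2/(q+1) = (n-2\gamma)/n$ with $q+1 := 2n/(n-2\gamma)$, which rewrites $\alpha_{k,\nu}^2 V_k^\nu = E(u_k^\nu) (\alpha_{k,\nu}^{q+1} T_k^\nu)^{(n-2\gamma)/n}$. Letting $s_k := \alpha_{k,\nu}^{q+1} T_k^\nu$ and applying Hölder's inequality with conjugate exponents $n/(2\gamma)$ and $n/(n-2\gamma)$ gives
\[
\sum_k E(u_k^\nu) s_k^{(n-2\gamma)/n} \leq \bigg(\sum_k E(u_k^\nu)^{n/(2\gamma)}\bigg)^{2\gamma/n} \bigg(\sum_k s_k\bigg)^{(n-2\gamma)/n}.
\]
Dividing through by $(\sum_k s_k)^{(n-2\gamma)/n}$ then yields exactly the target bound, up to the cross-term corrections coming from the two expansions above.

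The hard part will be absorbing those cross-term corrections. Since all $E(u_k^\nu) \to Y_\gamma(\mathbb{S}^n)$ and the individual masses $T_k^\nu$ approach the same limiting constant (by scaling of the Schoen bubble), the Hölder inequality above is asymptotically saturated, so a naive $o(1)$ error is not automatically absorbed. To close this gap I plan to sharpen the denominator bound via Bernoulli's inequality $(a+b)^{q+1} \geq a^{q+1} + (q+1) a^q b$ applied on the region $R_j$ where $\alpha_{j,\nu} u_j^\nu$ dominates; integrating and using bubble concentration produces an additional gain of the form $(q+1) \sum_{j \neq k} \alpha_{j,\nu}^q \alpha_{k,\nu} \int (u_j^\nu)^q u_k^\nu$ in $D_\nu$. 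Since $(q+1)(n-2\gamma)/n = 2$ and, by \eqref{eq: L 2n / n+2gamma estimate on bubble}, $F(u_l^\nu) \to s_\infty$ while $\sum_k \alpha_{k,\nu}^2 V_k^\nu / \sum_k \alpha_{k,\nu}^{q+1} T_k^\nu \to s_\infty$ as well, this denominator gain, once raised to the power $(n-2\gamma)/n$, dominates the numerator cross term (whose coefficient is $F(u_l^\nu)$), yielding the required sign and the claim for $\nu$ large. This follows the scheme of Brendle~\cite{Brendle} in the local setting, with the nonlocal interaction estimates of Proposition~\ref{prop: estimation of interaction of bubbles} serving as the substitute for pointwise manipulation.
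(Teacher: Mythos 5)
The proposal takes a genuinely different technical route from the paper, and while it identifies the correct strategic tension, the constant analysis at the crux of the argument is incorrect.

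The paper does not split the denominator $\int v_\nu^{q+1}$ into diagonal and cross terms. Instead, it applies \emph{integral} H\"older to the product
\[
\Big(\int_M \sum_k F(u_k)^{n/2\gamma}u_k^{q+1}\Big)^{2\gamma/n}\Big(\int_M v_\nu^{q+1}\Big)^{(n-2\gamma)/n}\ge\int_M\Big(\sum_k F(u_k)^{n/2\gamma}u_k^{q+1}\Big)^{2\gamma/n}v_\nu^2,
\]
then expands $v_\nu^2$ and lower-bounds the \emph{integrand} pointwise via \eqref{4.13}. The crucial feature of \eqref{4.13} is that, besides matching the leading term $F(u_j)u_iu_j^q$ in the numerator cross term exactly, it produces a strictly positive excess $c(\ldots)^{-(n-2\gamma)/2}\varepsilon_i^{-n}1_{\{d(x_i,x)\le\varepsilon_i\}}$, which after integration swallows the $o(1)$-error from \eqref{eq: Main interaction of bubble term estimate}. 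No delicate balance of constants between numerator and denominator is required.

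Your plan --- discrete H\"older on the diagonal, Bernoulli on the denominator, and then matching constants --- is in the right spirit but fails at the constant balance. The claim that the denominator gain has coefficient $(q+1)\theta = \frac{2n}{n-2\gamma}\cdot\frac{n-2\gamma}{n}=2$, matching the numerator cross-term coefficient $2F(u_j)\to 2s_\infty$, is not correct as stated. When you restrict the Bernoulli expansion $(\alpha_j u_j+\sum_{k\ne j}\alpha_k u_k)^{q+1}\ge(\alpha_j u_j)^{q+1}+(q+1)(\alpha_j u_j)^q\sum_{k\ne j}\alpha_k u_k$ to the region $R_j$ where $\alpha_j u_j$ dominates, the first term recovers only $\int_{R_j}u_j^{q+1}$ rather than $T_j=\int_M u_j^{q+1}$. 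The loss $\sum_{k\ne j}\int_{R_k}u_j^{q+1}\le\sum_{k\ne j}(\alpha_k/\alpha_j)\int_{R_k}u_j^q u_k$ must be subtracted, and after an index swap it cancels one copy of the $(q+1)$-gain, leaving a net gain with coefficient $q=\frac{n+2\gamma}{n-2\gamma}$, not $q+1$. Hence the effective scaled coefficient is $q\theta=\frac{n+2\gamma}{n}<2$, strictly less than the numerator coefficient $2$. With this deficit, the inequality is not closed by the coefficient match alone; one must instead exploit the asymmetry between the ``small'' interaction $\int u_i u_j^q$ appearing in the numerator cross term (with $\varepsilon_j>\varepsilon_i$) and the ``large'' interaction $\int u_i^q u_j$ captured by the denominator gain on $R_i$ --- a point you do not develop. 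As written, the final paragraph asserts but does not establish the required bound. The paper's pointwise lemma \eqref{4.13} is precisely the device that avoids this issue, so you would be better served following that route.
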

\begin{proof}
Using the identity
\begin{equation*}
\begin{split}
\int_Mv_\nu P_\gamma^{g_0}v_\nu d\mu_{g_0}
&=\int_M\sum_{k=1}^m\alpha_{k,\nu}^2u_{(x_{k,\nu},\varepsilon_{k,\nu})} P_\gamma^{g_0}(u_{(x_{k,\nu},\varepsilon_{k,\nu})})  d\mu_{g_0}\\
&\hspace{4mm}+2\int_M\sum_{i<j}\alpha_{i,\nu}\alpha_{j,\nu}u_{(x_{i,\nu},\varepsilon_{i,\nu})} P_\gamma^{g_0}(u_{(x_{j,\nu},\varepsilon_{j,\nu})})  d\mu_{g_0},
\end{split}
\end{equation*}
we obtain
\begin{equation*}
\begin{split}
E(v_\nu)\left(\int_Mv_\nu^{\frac{2n}{n-2\gamma}}d\mu_{g_0}\right)^{\frac{n-2\gamma}{n}}
&=\int_M\sum_{k=1}^m\alpha_{k,\nu}^2F(u_{(x_{k,\nu},\varepsilon_{k,\nu})}) u_{(x_{k,\nu},\varepsilon_{k,\nu})}^{\frac{2n}{n-2\gamma}} d\mu_{g_0}\\
&\hspace{4mm}+2\int_M\sum_{i<j}^m\alpha_{i,\nu}\alpha_{j,\nu}u_{(x_{i,\nu},\varepsilon_{i,\nu})} P_\gamma^{g_0}(u_{(x_{j,\nu},\varepsilon_{j,\nu})})  d\mu_{g_0}.
\end{split}
\end{equation*}
Moreover, using the positivity of Schoen's bubble (see Remark \ref{rmk: positivity of bubbles}), we have 
\begin{equation*}
\begin{split}
&\Bigg(\sum_{k=1}^mE(u_{(x_{k,\nu},\varepsilon_{k,\nu})})^{\frac{n}{2\gamma}}\Bigg)^{\frac{2\gamma}{n}}\left(\int_Mv_\nu^{\frac{2n}{n-2\gamma}}d\mu_{g_0}\right)^{\frac{n-2\gamma}{n}}\\
&=\Bigg(\int_M\Big(\sum_{k=1}^mF(u_{(x_{k,\nu},\varepsilon_{k,\nu})})^{\frac{n}{2\gamma}} u_{(x_{k,\nu},\varepsilon_{k,\nu})}^{\frac{2n}{n-2\gamma}}\Big) d\mu_{g_0}\Bigg)^{\frac{2\gamma}{n}}\left(\int_Mv_\nu^{\frac{2n}{n-2\gamma}}d\mu_{g_0}\right)^{\frac{n-2\gamma}{n}}\\
&\geq \int_M\Big(\sum_{k=1}^mF(u_{(x_{k,\nu},\varepsilon_{k,\nu})})^{\frac{n}{2\gamma}} u_{(x_{k,\nu},\varepsilon_{k,\nu})}^{\frac{2n}{n-2\gamma}}\Big)^{\frac{2\gamma}{n}}v_\nu^2d\mu_{g_0}\\
&\geq
\int_M\sum_{k=1}^m\alpha_{k,\nu}^2F(u_{(x_{k,\nu},\varepsilon_{k,\nu})}) u_{(x_{k,\nu},\varepsilon_{k,\nu})}^{\frac{2n}{n-2\gamma}} d\mu_{g_0}\\
&\hspace{4mm}+2\int_M\sum_{i<j}\alpha_{i,\nu}\alpha_{j,\nu}\Big(F(u_{(x_{i,\nu},\varepsilon_{i,\nu})})^{\frac{n}{2\gamma}}u_{(x_{i,\nu},\varepsilon_{i,\nu})}^{\frac{2n}{n-2\gamma}}+F(u_{(x_{j,\nu},\varepsilon_{j,\nu})})^{\frac{n}{2\gamma}}u_{(x_{j,\nu},\varepsilon_{j,\nu})}^{\frac{2n}{n-2\gamma}}\Big)^{\frac{2\gamma}{n}}\\
&\hspace{24mm}\times u_{(x_{i,\nu},\varepsilon_{i,\nu})}
u_{(x_{j,\nu},\varepsilon_{j,\nu})} d\mu_{g_0},
\end{split}
\end{equation*}
by H\"{o}lder's inequality. Consider a pair $i<j$.
We can find positive constants $c$ and $C$ independent of $\nu$ such that
\begin{equation*}
u_{(x_{i,\nu},\varepsilon_{i,\nu})}^{\frac{n+2\gamma}{n-2\gamma}}u_{(x_{j,\nu},\varepsilon_{j,\nu})}\geq
c\Big(\frac{\varepsilon_{j,\nu}^2+d(x_{i,\nu},x_{j,\nu})^2}{\varepsilon_{i,\nu}\varepsilon_{j,\nu}}\Big)^{-\frac{n-2\gamma}{2}}\varepsilon_{i,\nu}^{-n}
\end{equation*}
and
\begin{equation*}
u_{(x_{i,\nu},\varepsilon_{i,\nu})}u_{(x_{j,\nu},\varepsilon_{j,\nu})}^{\frac{n+2\gamma}{n-2\gamma}}\leq
C\Big(\frac{\varepsilon_{j,\nu}^2+d(x_{i,\nu},x_{j,\nu})^2}{\varepsilon_{i,\nu}\varepsilon_{j,\nu}}\Big)^{-\frac{n+2\gamma}{2}}\varepsilon_{i,\nu}^{-n}
\end{equation*}
if $d(x_{i,\nu},x)\leq \varepsilon_{i,\nu}$ and $\nu$ is sufficiently large. From this,
it follows that
\begin{equation}\label{4.13}
\begin{split}
&\Big(F(u_{(x_{i,\nu},\varepsilon_{i,\nu})})^{\frac{n}{2\gamma}}u_{(x_{i,\nu},\varepsilon_{i,\nu})}^{\frac{2n}{n-2\gamma}}
+F(u_{(x_{j,\nu},\varepsilon_{j,\nu})})^{\frac{n}{2\gamma}}u_{(x_{j,\nu},\varepsilon_{j,\nu})}^{\frac{2n}{n-2\gamma}}\Big)^{\frac{2\gamma}{n}}u_{(x_{i,\nu},\varepsilon_{i,\nu})}
u_{(x_{j,\nu},\varepsilon_{j,\nu})}\\
&\geq F(u_{(x_{j,\nu},\varepsilon_{j,\nu})})u_{(x_{i,\nu},\varepsilon_{i,\nu})}u_{(x_{j,\nu},\varepsilon_{j,\nu})}^{\frac{n+2\gamma}{n-2\gamma}}
+c\Big(\frac{\varepsilon_{j,\nu}^2+d(x_{i,\nu},x_{j,\nu})^2}{\varepsilon_{i,\nu}\varepsilon_{j,\nu}}\Big)^{-\frac{n-2\gamma}{2}}\varepsilon_{i,\nu}^{-n}
1_{\{d(x_{i,\nu},x)\leq \varepsilon_{i,\nu}\}}
\end{split}
\end{equation}
for $\nu$ sufficiently large. Integration over $M$ yields
\begin{equation*}
\begin{split}
&\Bigg(\sum_{k=1}^mE(u_{(x_{k,\nu},\varepsilon_{k,\nu})})^{\frac{n}{2\gamma}}\Bigg)^{\frac{2\gamma}{n}}\left(\int_Mv_\nu^{\frac{2n}{n-2\gamma}}d\mu_{g_0}\right)^{\frac{n-2\gamma}{n}}\\
&\geq \int_M\sum_{k=1}^m\alpha_{k,\nu}^2F(u_{(x_{k,\nu},\varepsilon_{k,\nu})}) u_{(x_{k,\nu},\varepsilon_{k,\nu})}^{\frac{2n}{n-2\gamma}} d\mu_{g_0}\\
&\hspace{4mm}+2\int_M\sum_{i<j}\alpha_{i,\nu}\alpha_{j,\nu}F(u_{(x_{j,\nu},\varepsilon_{j,\nu})})u_{(x_{i,\nu},\varepsilon_{i,\nu})}u_{(x_{j,\nu},\varepsilon_{j,\nu})}^{\frac{n+2\gamma}{n-2\gamma}}d\mu_{g_0}\\
&\hspace{4mm}+c\Big(\frac{\varepsilon_{j,\nu}^2+d(x_{i,\nu},x_{j,\nu})^2}{\varepsilon_{i,\nu}\varepsilon_{j,\nu}}\Big)^{-\frac{n-2\gamma}{2}}.
\end{split}
\end{equation*}
Putting these facts together, we get
\begin{equation*}
\begin{split}
&E(v_\nu)\left(\int_Mv_\nu^{\frac{2n}{n-2\gamma}}d\mu_{g_0}\right)^{\frac{n-2\gamma}{n}}\\
&\hspace{4mm}\leq \Bigg(\sum_{k=1}^mE(u_{(x_{k,\nu},\varepsilon_{k,\nu})})^{\frac{n}{2\gamma}}\Bigg)^{\frac{2\gamma}{n}}\left(\int_Mv_\nu^{\frac{2n}{n-2\gamma}}d\mu_{g_0}\right)^{\frac{n-2\gamma}{n}}\\
&\hspace{8mm}+2\int_M\sum_{i<j}\alpha_{i,\nu}\alpha_{j,\nu}u_{(x_{i,\nu},\varepsilon_{i,\nu})}\Big(P_\gamma^{g_0}u_{(x_{j,\nu},\varepsilon_{j,\nu})}-F(u_{(x_{j,\nu},\varepsilon_{j,\nu})})u_{(x_{j,\nu},\varepsilon_{j,\nu})}^{\frac{n+2\gamma}{n-2\gamma}}\Big)d\mu_{g_0}\\
&\hspace{8mm}-c\Big(\frac{\varepsilon_{j,\nu}^2+d(x_{i,\nu},x_{j,\nu})^2}{\varepsilon_{i,\nu}\varepsilon_{j,\nu}}\Big)^{-\frac{n-2\gamma}{2}}.
\end{split}
\end{equation*}
Finally, from \eqref{eq: Main interaction of bubble term estimate} the assertion follows.
\end{proof}

Therefore, we conclude this section with the next corollary, which will be used in the proof of Proposition \ref{prop1.1}.

\begin{cor}\label{cor5.7}
If $\nu$ is sufficiently large, then
$$E(v_\nu)\leq \Big(mY_\gamma(\mathbb{S}^n)^{\frac{n}{2\gamma}}\Big)^{\frac{2\gamma}{n}}.$$
\end{cor}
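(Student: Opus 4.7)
The plan is to combine Proposition \ref{prop5.6} with the self-action energy estimate \eqref{eq: energy for Schoen's bubble estimation} from Proposition \ref{prop: estimation of interaction of bubbles}. Proposition \ref{prop5.6} already provides
\[
E(v_\nu)\leq \Big(\sum_{k=1}^m E(u_{(x_{k,\nu},\varepsilon_{k,\nu})})^{\frac{n}{2\gamma}}\Big)^{\frac{2\gamma}{n}}
\]
for all sufficiently large $\nu$, so the task reduces to bounding each individual bubble energy $E(u_{(x_{k,\nu},\varepsilon_{k,\nu})})$ by the Sobolev threshold $Y_\gamma(\mathbb{S}^n)$.

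First I would note that, by Lemma \ref{lem2.1}(ii), $\varepsilon_{k,\nu}^*\to 0$ for each $k$ as $\nu\to\infty$, and by Proposition \ref{prop3.2} we have $\varepsilon_{k,\nu}=(1+o(1))\varepsilon_{k,\nu}^*$, so in particular $\varepsilon_{k,\nu}\to 0$ for every $1\leq k\leq m$. Therefore, for $\nu$ large enough (depending only on $m$), the scales $\varepsilon_{k,\nu}$ are simultaneously small enough to apply \eqref{eq: energy for Schoen's bubble estimation} to each bubble, yielding
\[
E(u_{(x_{k,\nu},\varepsilon_{k,\nu})})\leq Y_\gamma(\mathbb{S}^n)
\]
for every $1\leq k\leq m$.

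Plugging these $m$ inequalities into the bound from Proposition \ref{prop5.6} gives
\[
E(v_\nu)\leq \Big(\sum_{k=1}^m Y_\gamma(\mathbb{S}^n)^{\frac{n}{2\gamma}}\Big)^{\frac{2\gamma}{n}}=\Big(mY_\gamma(\mathbb{S}^n)^{\frac{n}{2\gamma}}\Big)^{\frac{2\gamma}{n}},
\]
which is exactly the desired estimate. There is no genuine obstacle here; the entire content has already been absorbed into Proposition \ref{prop5.6} and the selfaction estimate, and this corollary is just the combination that will feed into the proof of Proposition \ref{prop1.1} via the identity \eqref{2.7}.
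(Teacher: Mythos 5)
Your proof is correct and is precisely the argument the paper intends: the corollary is stated without proof because it follows immediately by combining Proposition \ref{prop5.6} with the selfaction bound $E(u_{(x_{k,\nu},\varepsilon_{k,\nu})})\leq Y_\gamma(\mathbb{S}^n)$ from \eqref{eq: energy for Schoen's bubble estimation}, exactly as in the proof of Corollary \ref{cor6.16}. Your extra remark that $\varepsilon_{k,\nu}\to 0$ (via Lemma \ref{lem2.1} and Proposition \ref{prop3.2}), justifying the smallness hypothesis needed for the selfaction estimate to apply to every $k$ simultaneously, is a worthwhile detail that the paper leaves implicit.
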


\section{The case \texorpdfstring{$u_\infty>0$}{u\_infty>0}}

We have the following proposition; its proof can be found in \cite{ChanSireSun}.

\begin{prop}[Proposition 4.1 in \cite{ChanSireSun}]\label{prop6.1}
There exists a sequence of smooth functions $\{\psi_a: a\in\mathbb{N}\}$ and a sequence of positive real numbers
$\{\lambda_a:a\in\mathbb{N}\}$ with the following properties:\\
(i) For all $a\in\mathbb{N}$,
$$P_\gamma^{g_0}\psi_a=\lambda_a u_\infty^{\frac{4\gamma}{n-2\gamma}}\psi_a~~\mbox{ in }M.$$
(ii) For all $a,b\in\mathbb{N}$,
\begin{equation}\label{eq: def of inner product and orthogonality}
    (\psi_a,\psi_b):=\int_M\psi_a\psi_b u_\infty^{\frac{4\gamma}{n-2\gamma}}d\mu_{g_0}=
    \left\{
      \begin{array}{ll}
        1, & \hbox{if $a=b$;} \\
        0, & \hbox{if $a\neq b$.}
      \end{array}
    \right.
\end{equation}
(iii) The span of $\{\psi_a: a\in\mathbb{N}\}$ is dense in $L^2(M)$.\\
(iv) $\lambda_a\to\infty$ as $a\to\infty$.
\end{prop}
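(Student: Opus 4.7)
The plan is to realize the weighted eigenvalue problem (i) as the spectral decomposition of a compact self-adjoint operator on the weighted Hilbert space
\[
    \mathcal{H} := L^2\!\left(M,\, u_\infty^{\frac{4\gamma}{n-2\gamma}}\, d\mu_{g_0}\right),
\]
equipped with the inner product $(\cdot,\cdot)$ already introduced in \eqref{eq: def of inner product and orthogonality}. Since we are in the case $u_\infty>0$ and $u_\infty$ is smooth (being a positive solution of \eqref{2.4}), the weight $u_\infty^{4\gamma/(n-2\gamma)}$ is bounded above and below by positive constants, so $\mathcal{H}$ coincides with $L^2(M,d\mu_{g_0})$ as a topological vector space, with an equivalent norm. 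By the Yamabe positivity hypothesis $Y_\gamma(M,[g_0])>0$ together with the Sobolev embedding $H^\gamma(M)\hookrightarrow L^{2n/(n-2\gamma)}(M)$, the bilinear form $B(u,v):=\int_M u\,P_\gamma^{g_0}v\,d\mu_{g_0}$ is continuous and coercive on $H^\gamma(M)$, with $B(u,u)$ equivalent to $\|u\|_{H^\gamma(M)}^2$.

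By Lax-Milgram, for each $f\in\mathcal{H}$ the weak equation $P_\gamma^{g_0} w = u_\infty^{4\gamma/(n-2\gamma)} f$ admits a unique solution $Tf\in H^\gamma(M)$, defining a continuous operator $T\colon\mathcal{H}\to H^\gamma(M)$. Composition with the compact embedding $H^\gamma(M)\hookrightarrow L^2(M)\simeq\mathcal{H}$ makes $T$ a compact operator on $\mathcal{H}$, and the symmetry of $P_\gamma^{g_0}$ yields
\[
    (Tf,g)=\int_M (Tf)\, u_\infty^{\frac{4\gamma}{n-2\gamma}} g\,d\mu_{g_0}=B(Tf,Tg)=(f,Tg),
\]
so $T$ is self-adjoint with respect to $(\cdot,\cdot)$. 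Positivity $(Tf,f)=B(Tf,Tf)\geq 0$ and injectivity follow from coercivity of $B$. The spectral theorem for positive compact self-adjoint operators then produces an orthonormal basis $\{\psi_a\}$ of $\mathcal{H}$ with $T\psi_a=\mu_a\psi_a$, $\mu_a>0$, $\mu_a\to 0$. Setting $\lambda_a:=1/\mu_a$ gives (i), (ii), and (iv); item (iii) follows since density in $\mathcal{H}$ is equivalent to density in $L^2(M)$ under the equivalence of norms.

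The remaining point is smoothness of each $\psi_a$. I plan to bootstrap regularity using the Caffarelli-Silvestre-type extension realization of $P_\gamma^{g_0}$ recalled in Section \ref{sec: pre}: the extension $\Psi_a$ of $\psi_a$ satisfies a degenerate elliptic equation on $X$ with smooth coefficients, while the Neumann-type boundary condition reads $-c_\gamma\lim_{y^*\to 0}(y^*)^{1-2\gamma}\partial_{y^*}\Psi_a=\lambda_a u_\infty^{4\gamma/(n-2\gamma)}\psi_a-R_\gamma^{g_0}\psi_a$. Starting from $\psi_a\in H^\gamma(M)$, iterating boundary Schauder estimates for such conformally covariant fractional equations successively upgrades the regularity of $\psi_a$ to $C^\infty(M)$. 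The most technical step in this plan is this last smoothness bootstrap, since fractional-order boundary regularity on a curved manifold must be invoked; however, by the conformal covariance this reduces to standard regularity for fractional elliptic equations with smooth positive weights, which is by now well established. Everything else is a direct application of Lax-Milgram and the spectral theorem.
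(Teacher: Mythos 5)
The paper does not prove this proposition; it cites \cite{ChanSireSun} for the proof. Your reconstruction via the weighted space $\mathcal{H}=L^{2}\!\bigl(M,u_\infty^{4\gamma/(n-2\gamma)}d\mu_{g_0}\bigr)$, Lax--Milgram to build the solution operator $T$, compactness of $H^\gamma(M)\hookrightarrow L^2(M)$, self-adjointness of $T$ on $\mathcal{H}$, and the spectral theorem is exactly the standard argument one finds for such weighted eigenvalue problems (and the one used in Brendle's local case, from which \cite{ChanSireSun} is adapted), so this matches the expected source proof in substance.

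One spot worth tightening if you write this out: the claim that $Y_\gamma(M,[g_0])>0$ together with the Sobolev embedding gives $B(u,u)\simeq\|u\|_{H^\gamma}^2$ directly is a small leap. The Yamabe inequality alone gives $B(u,u)\geq Y_\gamma\|u\|_{L^{2n/(n-2\gamma)}}^2$, which controls $\|u\|_{L^2}^2$ but not the $H^\gamma$ seminorm; you also need the Gårding inequality for $P_\gamma^{g_0}$ (its principal symbol is $|\xi|^{2\gamma}$), i.e.\ $B(u,u)\geq c\|u\|_{H^\gamma}^2-C\|u\|_{L^2}^2$, and then combine the two. Relatedly, well-posedness in Lax--Milgram needs the right-hand side $u_\infty^{4\gamma/(n-2\gamma)}f$ to sit in $H^{-\gamma}(M)$, which is fine since it lies in $L^2(M)$, but it's worth saying. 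The smoothness bootstrap via the extension is correct in outline; note that the Neumann datum is $\lambda_a u_\infty^{4\gamma/(n-2\gamma)}\psi_a-R_\gamma^{g_0}\psi_a$, so the gain of $2\gamma$ (modulo $\varepsilon$) per iteration via the degenerate-boundary Schauder estimates closes the loop.

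These are refinements rather than gaps: the approach and the conclusions are correct.
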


Let $A$ be a finite subset of $\mathbb{N}$ such that $\lambda_a>\frac{n+2\gamma}{n-2\gamma}s_\infty$
for all $a\not\in A$. We denote by $\Pi$ the projection operator
\begin{equation*}
\Pi(f)=\sum_{a\not\in A}\left(\int_M\psi_a f d\mu_{g_0}\right)\psi_a u_\infty^{\frac{4\gamma}{n-2\gamma}}
=f-\sum_{a\in A}\left(\int_M\psi_a f d\mu_{g_0}\right)\psi_a u_\infty^{\frac{4\gamma}{n-2\gamma}}.
\end{equation*}

\begin{remark}
    Note that this definition facilitates the computations for Lemma \ref{lem: def of perturbed uinfty} below and is not the canonical projection with respect to the inner product defined in \eqref{eq: def of inner product and orthogonality}, which would read
    \[
        \tilde{\Pi}(f)=\sum_{a\notin A}\left(\int_M\psi_a f u_\infty^{\frac{4\gamma}{n-2\gamma}}\,d\mu_{g_0}\right)\psi_a.
    \]
\end{remark}

\begin{lem}\label{lem6.2}
For every $1\leq p<\infty$, we can find a constant $C$ such that for any $f\in \dot{H}^\gamma(M)$,
\begin{equation*}
\|f\|_{L^p(M)}\leq C\Big\|P_\gamma^{g_0} f-\frac{n+2\gamma}{n-2\gamma}s_\infty u_\infty^{\frac{4\gamma}{n-2\gamma}} f\Big\|_{L^p(M)}+C\sup_{a\in A}\left|\int_M u_\infty^{\frac{4\gamma}{n-2\gamma}}\psi_a fd\mu_{g_0}\right|.
\end{equation*}
\end{lem}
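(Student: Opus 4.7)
The plan is a contradiction argument combining $L^p$-regularity for the fractional conformal Laplacian, Rellich--Kondrachov compactness, and a spectral expansion based on Proposition \ref{prop6.1}. Set $Lf := P_\gamma^{g_0}f - \tfrac{n+2\gamma}{n-2\gamma}s_\infty u_\infty^{4\gamma/(n-2\gamma)}f$, and suppose the estimate fails. Then there exists a sequence $\{f_k\} \subset \dot{H}^\gamma(M)$ satisfying
\[
\|f_k\|_{L^p(M)} = 1, \qquad \|Lf_k\|_{L^p(M)} \to 0, \qquad \sup_{a\in A}\left|\int_M u_\infty^{\frac{4\gamma}{n-2\gamma}}\psi_a f_k\,d\mu_{g_0}\right| \to 0.
\]

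The first step is to upgrade $\{f_k\}$ to a uniform $W^{2\gamma,p}(M)$ bound. Rewriting
\[
P_\gamma^{g_0}f_k = Lf_k + \tfrac{n+2\gamma}{n-2\gamma}s_\infty u_\infty^{4\gamma/(n-2\gamma)}f_k,
\]
the right-hand side is bounded in $L^p(M)$ since $u_\infty \in L^\infty(M)$ by elliptic regularity applied to \eqref{2.4}. Standard $L^p$ pseudodifferential estimates for the order-$2\gamma$ conformal Laplacian on the closed manifold $M$ then yield $\|f_k\|_{W^{2\gamma,p}(M)} \leq C$. By Rellich--Kondrachov, after passing to a subsequence, $f_k \to f_\infty$ strongly in $L^p(M)$ and weakly in $W^{2\gamma,p}(M)$, with $\|f_\infty\|_{L^p(M)} = 1$. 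Passing to the limit, $f_\infty$ satisfies $Lf_\infty = 0$ in $M$ together with $\int_M u_\infty^{4\gamma/(n-2\gamma)}\psi_a f_\infty\,d\mu_{g_0} = 0$ for every $a\in A$.

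Next, bootstrapping the eigenvalue equation $P_\gamma^{g_0}f_\infty = \tfrac{n+2\gamma}{n-2\gamma}s_\infty u_\infty^{4\gamma/(n-2\gamma)}f_\infty$ (iterating Sobolev embedding and $L^p$-regularity whenever $p<2$) places $f_\infty \in L^2(M)\cap C^\infty(M)$. Then $f_\infty$ lies in the weighted $L^2$ space on which $\{\psi_b\}$ forms an orthonormal basis by Proposition \ref{prop6.1}, and expanding $f_\infty = \sum_b c_b\psi_b$ and substituting into the eigenvalue equation yields $c_b\bigl(\lambda_b - \tfrac{n+2\gamma}{n-2\gamma}s_\infty\bigr) = 0$ for every $b$. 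By the definition of $A$, this forces $c_b = 0$ for every $b\notin A$, so $f_\infty = \sum_{a\in A}c_a\psi_a$. The orthogonality relations $(\psi_a, f_\infty) = c_a = 0$ for $a\in A$ then give $f_\infty \equiv 0$, contradicting $\|f_\infty\|_{L^p(M)}=1$.

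I expect the main obstacle to be the $L^p$-regularity step, i.e.\ the global Calder\'on--Zygmund estimate $\|f\|_{W^{2\gamma,p}(M)} \leq C(\|P_\gamma^{g_0}f\|_{L^p(M)} + \|f\|_{L^p(M)})$ on the closed manifold for $\gamma\in(0,1)$ and $1\leq p<\infty$. The case $p=2$ is immediate from spectral calculus, but general $p$ requires either pseudodifferential $L^p$-theory adapted to the curved setting or the $A_2$-weighted Calder\'on--Zygmund theory applied to the Caffarelli--Silvestre/Chang--Gonz\'alez extension used throughout the paper; once that input is available the rest of the argument is soft.
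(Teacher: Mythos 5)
Your proof is correct and follows essentially the same contradiction-plus-compactness argument as the paper, filling in the Rellich--Kondrachov/elliptic-regularity details that the paper compresses into the phrase ``by compactness'' and making the eigenbasis expansion explicit where the paper appeals to density and the positivity of $u_\infty$. The $L^p$ pseudodifferential estimate you flag is indeed the input the paper relies on implicitly, and your handling of the case $p<2$ via bootstrap to $L^2\cap C^\infty$ closes the one gap a reader might worry about in the paper's terser version.
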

\begin{proof}
Suppose that it is not true. By compactness, we can find a function $f\in L^p(M)$ satisfying
$\|f\|_{L^p(M)}=1$,
\begin{equation*}
\int_M u_\infty^{\frac{4\gamma}{n-2\gamma}}\psi_a fd\mu_{g_0}=0
\end{equation*}
for all $a\in A$ and
\begin{equation*}
P_\gamma^{g_0} f-\frac{n+2\gamma}{n-2\gamma}s_\infty u_\infty^{\frac{4\gamma}{n-2\gamma}} f=0
\end{equation*}
in the sense of distributions.
Hence, if we use the function $\psi_a$ as a test function, then we obtain
$$\left(\lambda_a-\frac{n+2\gamma}{n-2\gamma}s_\infty\right)\int_Mu_\infty^{\frac{4\gamma}{n-2\gamma}}\psi_a fd\mu_{g_0}=0$$
for all $a\in\mathbb{N}$. In particular, we have
\begin{equation*}
\int_M u_\infty^{\frac{4\gamma}{n-2\gamma}}\psi_a fd\mu_{g_0}=0
\end{equation*}
for all $a\not\in A$.
Thus, from the density and the assumption $u_\infty>0$, we conclude that $f=0$. This is a contradiction.
\end{proof}

\begin{lem}\label{lem6.3}
(i) There exists a constant $C$ such that for any $f\in \dot{H}^\gamma(M)$,
\begin{equation*}
\begin{split}
&\|f\|_{L^{\frac{n+2\gamma}{n-2\gamma}}(M)}\\
&\leq C\left\|\Pi\left(P_\gamma^{g_0} f-\frac{n+2\gamma}{n-2\gamma}s_\infty u_\infty^{\frac{4\gamma}{n-2\gamma}} f\right)\right\|_{L^{\frac{n(n+2\gamma)}{n^2+4\gamma^2}}(M)}+C\sup_{a\in A}\left|\int_M u_\infty^{\frac{4\gamma}{n-2\gamma}}\psi_a fd\mu_{g_0}\right|.
\end{split}
\end{equation*}
(ii) There exists a constant $C$ such that
\begin{equation*}
\|f\|_{L^{1}(M)}\leq C\left\|\Pi\left(P_\gamma^{g_0} f-\frac{n+2\gamma}{n-2\gamma}s_\infty u_\infty^{\frac{4\gamma}{n-2\gamma}} f\right)\right\|_{L^1(M)}+C\sup_{a\in A}\left|\int_M u_\infty^{\frac{4\gamma}{n-2\gamma}}\psi_a fd\mu_{g_0}\right|.
\end{equation*}
\end{lem}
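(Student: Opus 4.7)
My approach combines a clean algebraic identification of $g-\Pi(g)$ with Lemma~\ref{lem6.2}, then for part~(i) upgrades the resulting $L^p$-estimate via elliptic regularity for the fractional conformal Laplacian. Setting $Lf:=P_\gamma^{g_0}f-\tfrac{n+2\gamma}{n-2\gamma}s_\infty u_\infty^{\frac{4\gamma}{n-2\gamma}}f$ and $g:=Lf$, the self-adjointness of $P_\gamma^{g_0}$ together with the eigenequation $P_\gamma^{g_0}\psi_a=\lambda_a u_\infty^{\frac{4\gamma}{n-2\gamma}}\psi_a$ gives, for every $a$,
\[
\int_M\psi_a\,g\,d\mu_{g_0}=\Bigl(\lambda_a-\tfrac{n+2\gamma}{n-2\gamma}s_\infty\Bigr)\int_M u_\infty^{\frac{4\gamma}{n-2\gamma}}\psi_a f\,d\mu_{g_0}.
\]
Hence, by definition of $\Pi$,
\[
g-\Pi(g)=\sum_{a\in A}\Bigl(\lambda_a-\tfrac{n+2\gamma}{n-2\gamma}s_\infty\Bigr)\Bigl(\int_M u_\infty^{\frac{4\gamma}{n-2\gamma}}\psi_a f\,d\mu_{g_0}\Bigr)\psi_a\,u_\infty^{\frac{4\gamma}{n-2\gamma}},
\]
so that for every $p\in[1,\infty)$,
\[
\|g-\Pi(g)\|_{L^p(M)}\leq C\sup_{a\in A}\Bigl|\int_M u_\infty^{\frac{4\gamma}{n-2\gamma}}\psi_a f\,d\mu_{g_0}\Bigr|.
\]

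Part (ii) will then follow at once by applying Lemma~\ref{lem6.2} with $p=1$ and combining with the triangle inequality $\|g\|_{L^1}\leq\|\Pi(g)\|_{L^1}+\|g-\Pi(g)\|_{L^1}$.

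For part (i), I would decompose $f=f_1+f_2$ with $f_2:=\sum_{a\in A}\bigl(\int_M u_\infty^{\frac{4\gamma}{n-2\gamma}}\psi_a f\,d\mu_{g_0}\bigr)\psi_a$. The orthogonality \eqref{eq: def of inner product and orthogonality} implies $\int_M u_\infty^{\frac{4\gamma}{n-2\gamma}}\psi_a f_1\,d\mu_{g_0}=0$ for every $a\in A$, and a short computation shows $Lf_1=\Pi(Lf)$; moreover $\|f_2\|_{L^{\frac{n+2\gamma}{n-2\gamma}}(M)}\leq C\sup_{a\in A}\bigl|\int_M u_\infty^{\frac{4\gamma}{n-2\gamma}}\psi_a f\,d\mu_{g_0}\bigr|$ is trivial. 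It then suffices to prove
\[
\|f_1\|_{L^{\frac{n+2\gamma}{n-2\gamma}}(M)}\leq C\,\|Lf_1\|_{L^{\frac{n(n+2\gamma)}{n^2+4\gamma^2}}(M)}.
\]
The exponents $p=\tfrac{n(n+2\gamma)}{n^2+4\gamma^2}$ and $q=\tfrac{n+2\gamma}{n-2\gamma}$ satisfy $\tfrac{1}{p}-\tfrac{1}{q}=\tfrac{2\gamma}{n}$, precisely the scaling of Sobolev embedding at order $2\gamma$. I would therefore invoke the $L^p$-regularity estimate $\|f_1\|_{W^{2\gamma,p}(M)}\leq C\bigl(\|Lf_1\|_{L^p(M)}+\|f_1\|_{L^p(M)}\bigr)$ for the elliptic pseudodifferential operator $P_\gamma^{g_0}$ on the closed manifold $M$, absorb the lower-order term using that $f_1$ is weighted-orthogonal to $\ker L\subset\mathrm{span}\{\psi_a:a\in A\}$ with spectral gap $\min_{b\notin A}|\lambda_b-\tfrac{n+2\gamma}{n-2\gamma}s_\infty|>0$, and then compose with the Sobolev embedding $W^{2\gamma,p}(M)\hookrightarrow L^q(M)$.

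The main obstacle is precisely this last step at the critical Sobolev exponent: establishing the inverse estimate $\|f_1\|_{W^{2\gamma,p}}\leq C\|Lf_1\|_{L^p}$ for $f_1$ in the weighted-orthogonal complement of the finite-dimensional span $\{\psi_a:a\in A\}$. The pseudodifferential $L^p$-theory for $P_\gamma^{g_0}$ together with the discrete spectral resolution of Proposition~\ref{prop6.1} should render this routine (via a standard subcritical contradiction-compactness argument followed by a bootstrap), but one must verify the correct functional framework in which both the elliptic estimate and the Sobolev embedding at the critical exponent apply simultaneously.
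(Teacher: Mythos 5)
Your part (ii) coincides exactly with the paper's: the algebraic identity for $g-\Pi(g)$ plus Lemma~\ref{lem6.2} with $p=1$ and the triangle inequality.

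For part (i) your route is correct but slightly more elaborate than the paper's. You decompose $f=f_1+f_2$ with $f_2\in\mathrm{span}\{\psi_a:a\in A\}$ and $f_1$ weighted-orthogonal to that span (and the identity $Lf_1=\Pi(Lf)$ is indeed correct), whereas the paper keeps $f$ intact: it invokes the fractional Sobolev/elliptic estimate $\|f\|_{L^q(M)}\leq C\|Lf\|_{L^p(M)}+C\|f\|_{L^p(M)}$ with $q=\tfrac{n+2\gamma}{n-2\gamma}$, $p=\tfrac{n(n+2\gamma)}{n^2+4\gamma^2}$, then applies Lemma~\ref{lem6.2} to the lower-order term $\|f\|_{L^p(M)}$, and finally swaps $\|Lf\|_{L^p}$ for $\|\Pi(Lf)\|_{L^p}+\sup_{a\in A}|\cdots|$ via the same algebraic identity. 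No decomposition of $f$ is needed, which makes the proof a few lines shorter.

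The step you flag as the ``main obstacle'' is not actually a gap: the inverse estimate $\|f_1\|_{W^{2\gamma,p}}\leq C\|Lf_1\|_{L^p}$ on the weighted-orthogonal complement is precisely what Lemma~\ref{lem6.2} already gives you. When $\int_M u_\infty^{\frac{4\gamma}{n-2\gamma}}\psi_a f_1\,d\mu_{g_0}=0$ for all $a\in A$, the supremum term in Lemma~\ref{lem6.2} vanishes and you get $\|f_1\|_{L^p}\leq C\|Lf_1\|_{L^p}$ outright; the elliptic $L^p$-estimate then upgrades this to $W^{2\gamma,p}$. A separate subcritical contradiction-compactness argument would simply re-prove Lemma~\ref{lem6.2}. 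Criticality enters only at the final embedding $W^{2\gamma,p}(M)\hookrightarrow L^q(M)$, which requires boundedness, not compactness, so there is no genuine difficulty there. In short: you already hold the tool that closes the loop, and the cleanest presentation is the paper's, which applies Lemma~\ref{lem6.2} directly to the undecomposed $f$.
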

\begin{proof}
(i)
It follows from the fractional Sobolev inequality and elliptic estimate that (c.f. \cite[Theorem 6.5]{DPV})
\begin{equation*}
\begin{split}
&\|f\|_{L^{\frac{n+2\gamma}{n-2\gamma}}(M)}\\
&\leq C\left\|P_\gamma^{g_0} f-\frac{n+2\gamma}{n-2\gamma}s_\infty u_\infty^{\frac{4\gamma}{n-2\gamma}} f\right\|_{L^{\frac{n(n+2\gamma)}{n^2+4\gamma^2}}(M)}
+C\|f\|_{L^{\frac{n(n+2\gamma)}{n^2+4\gamma^2}}(M)}.
\end{split}
\end{equation*}
Using Lemma \ref{lem6.2}, we obtain
\begin{equation*}
\begin{split}
&\|f\|_{L^{\frac{n+2\gamma}{n-2\gamma}}(M)}\\
&\leq C\left\|P_\gamma^{g_0} f-\frac{n+2\gamma}{n-2\gamma}s_\infty u_\infty^{\frac{4\gamma}{n-2\gamma}} f\right\|_{L^{\frac{n(n+2\gamma)}{n^2+4\gamma^2}}(M)}
+C\sup_{a\in A}\left|\int_M u_\infty^{\frac{4\gamma}{n-2\gamma}}\psi_a fd\mu_{g_0}\right|.
\end{split}
\end{equation*}
By definition of $\Pi$, we have
\begin{equation}\label{92}
\begin{split}
&P_\gamma^{g_0} f-\frac{n+2\gamma}{n-2\gamma}s_\infty u_\infty^{\frac{4\gamma}{n-2\gamma}} f\\
&\hspace{4mm}=\Pi\Big(P_\gamma^{g_0} f-\frac{n+2\gamma}{n-2\gamma}s_\infty u_\infty^{\frac{4\gamma}{n-2\gamma}} f\Big)\\
&\hspace{8mm}+\sum_{a\in A}\Big(\lambda_a-\frac{n+2\gamma}{n-2\gamma}s_\infty\Big)\left(\int_Mu_\infty^{\frac{4\gamma}{n-2\gamma}} \psi_a f d\mu_{g_0}\right) u_\infty^{\frac{4\gamma}{n-2\gamma}}\psi_a.
\end{split}
\end{equation}
This implies
\begin{equation}\label{93}
\begin{split}
&\left\|P_\gamma^{g_0} f-\frac{n+2\gamma}{n-2\gamma}s_\infty u_\infty^{\frac{4\gamma}{n-2\gamma}} f\right\|_{L^q(M)}\\
&\leq\left\|\Pi\Big(P_\gamma^{g_0} f-\frac{n+2\gamma}{n-2\gamma}s_\infty u_\infty^{\frac{4\gamma}{n-2\gamma}} f\Big)\right\|_{L^q(M)}
+C\sup_{a\in A}\left|\int_Mu_\infty^{\frac{4\gamma}{n-2\gamma}} \psi_a f d\mu_{g_0}\right|.
\end{split}
\end{equation}
Putting these facts together, the assertion follows.\\
(ii) It follows from Lemma \ref{lem6.2} that
\begin{equation*}
\|f\|_{L^1(M)}\leq C\left\|P_\gamma^{g_0} f-\frac{n+2\gamma}{n-2\gamma}s_\infty u_\infty^{\frac{4\gamma}{n-2\gamma}} f\right\|_{L^1(M)}
+C\sup_{a\in A}\left|\int_M u_\infty^{\frac{4\gamma}{n-2\gamma}}\psi_a fd\mu_{g_0}\right|.
\end{equation*}
Combining this with
(\ref{92}) and (\ref{93}) yields
\begin{equation*}
\|f\|_{L^{1}(M)}\leq C\left\|\Pi\left(P_\gamma^{g_0} f-\frac{n+2\gamma}{n-2\gamma}s_\infty u_\infty^{\frac{4\gamma}{n-2\gamma}} f\right)\right\|_{L^1(M)}+C\sup_{a\in A}\left|\int_M u_\infty^{\frac{4\gamma}{n-2\gamma}}\psi_a fd\mu_{g_0}\right|.
\end{equation*}
This proves the assertion.
\end{proof}

We are going to construct functions $\bar{u}_z$, which are perturbations of $u_\infty$ in a finite dimensional subspace, and whose derivatives satisfy nice orthogonality conditions. We have the following two lemmas in \cite{ChanSireSun}.

\begin{lem}[Lemma 4.3 in \cite{ChanSireSun}]\label{lem: def of perturbed uinfty}
There exists a positive real number $\zeta$ with the following signature: for every vector $z\in \mathbb{R}^{|A|}$
with $|z|\leq \zeta$, there exists a smooth positive function $\bar{u}_z$ such that
\begin{equation*}
\int_Mu_\infty^{\frac{4\gamma}{n-2\gamma}}(\bar{u}_z-u_\infty)\psi_a d\mu_{g_0}=z_a~~\mbox{ for all }a\in A,
\end{equation*}
and
$$\Pi\left(P_\gamma^{g_0}\bar{u}_z-s_\infty \bar{u}_z^{\frac{n+2\gamma}{n-2\gamma}}\right)=0.$$
Furthermore, the map $z\mapsto\bar{u}_z$ is real analytic.
\end{lem}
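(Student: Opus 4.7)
The plan is to apply the analytic implicit function theorem near the trivial solution $\bar{u}_0 = u_\infty$. Since $u_\infty$ already satisfies $P_\gamma^{g_0} u_\infty = s_\infty u_\infty^{(n+2\gamma)/(n-2\gamma)}$, both defining conditions hold at $z = 0$; the task is to perturb this solution in an $|A|$-parameter family consistent with the prescribed linear conditions on the $z_a$.

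Concretely, I would parameterize
$$\bar{u}_z = u_\infty + \sum_{a\in A} z_a\psi_a + v,\qquad v \in V := \{h \in H^\gamma(M) : (h, \psi_a) = 0 \text{ for all } a\in A\},$$
where $(\cdot,\cdot)$ is the weighted inner product from Proposition \ref{prop6.1}(ii). By orthonormality of the $\psi_a$, the linear constraints $(\bar{u}_z - u_\infty, \psi_a) = z_a$ are then automatic, and what remains is the single equation
$$F(z, v) := \Pi\Big(P_\gamma^{g_0}\bar{u}_z - s_\infty\bar{u}_z^{\frac{n+2\gamma}{n-2\gamma}}\Big) = 0,$$
viewed as a map from a neighborhood of $(0, 0)$ in $\mathbb{R}^{|A|}\times V$ into $\Pi$ of a suitable Lebesgue space (for instance $L^{n(n+2\gamma)/(n^2+4\gamma^2)}(M)$, as suggested by Lemma \ref{lem6.3}). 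Since $u_\infty > 0$ on the compact manifold $M$, the power nonlinearity is real analytic on a neighborhood of $u_\infty$, so $F$ is real analytic and $F(0, 0) = 0$.

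The crux is to invert
$$D_v F(0, 0)[h] = \Pi\Big(P_\gamma^{g_0} h - \tfrac{n+2\gamma}{n-2\gamma} s_\infty u_\infty^{\frac{4\gamma}{n-2\gamma}} h\Big)$$
from $V$ onto the target. By Proposition \ref{prop6.1}(i), this operator is diagonalized by the basis $\{\psi_a\}_{a\notin A}$ of $V$, multiplying $\psi_a$ by $\lambda_a - \frac{n+2\gamma}{n-2\gamma}s_\infty$, which is strictly positive for $a\notin A$ by the choice of $A$ and bounded away from $0$ by Proposition \ref{prop6.1}(iv). Injectivity on $V$ is then immediate, while boundedness of the inverse between the chosen function spaces is precisely the content of Lemmas \ref{lem6.2}--\ref{lem6.3} restricted to $V$ (orthogonality to $\{\psi_a\}_{a\in A}$ eliminates the final term in those estimates).

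The analytic implicit function theorem then supplies a real-analytic map $z \mapsto v(z)$ defined near $0\in\mathbb{R}^{|A|}$ with $F(z, v(z)) = 0$, and setting $\bar{u}_z := u_\infty + \sum_{a\in A} z_a\psi_a + v(z)$ gives the required real-analytic family. Positivity for $|z|\leq\zeta$ small follows from $\inf_M u_\infty > 0$ and continuity, while smoothness of each $\bar{u}_z$ follows by fractional elliptic bootstrap applied to
$$P_\gamma^{g_0}\bar{u}_z - s_\infty\bar{u}_z^{\frac{n+2\gamma}{n-2\gamma}} = \sum_{a\in A} \mu_a(z)\, u_\infty^{\frac{4\gamma}{n-2\gamma}}\psi_a,$$
whose right-hand side is smooth. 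The main obstacle I anticipate is the functional-analytic setup, namely choosing Banach spaces in which $F$ is analytic and $D_v F(0, 0)$ is a topological isomorphism; this is exactly what Lemmas \ref{lem6.2}--\ref{lem6.3} are designed to provide, so once they are in hand the argument reduces to a standard application of the implicit function theorem.
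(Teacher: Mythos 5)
The paper does not reprove this lemma; it is cited verbatim from Lemma~4.3 of Chan--Sire--Sun, and the proof there is precisely the implicit-function-theorem argument you outline, going back to Brendle's Lemma~6.2 in the classical case. Your decomposition $\bar u_z = u_\infty + \sum_{a\in A} z_a\psi_a + v$ with $v\perp\psi_a$, the diagonalization of $D_vF(0,0)$ by $\{\psi_a\}_{a\notin A}$ with gaps $\lambda_a - \tfrac{n+2\gamma}{n-2\gamma}s_\infty > 0$, the use of Lemmas~\ref{lem6.2}--\ref{lem6.3} to supply the Banach-space isomorphism, and the recovery of positivity and smoothness by continuity and elliptic bootstrap are all correct and match the standard proof.
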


\begin{lem}[Lemma 4.4 in \cite{ChanSireSun}]\label{lem6.5}
There exists a real number $0<\delta<1$ such that
$$E(\bar{u}_z)-E(u_\infty)
\leq C\sup_{a\in A}\left|\int_M\psi_a\left(P_\gamma^{g_0}\bar{u}_z-s_\infty \bar{u}_z^{\frac{n+2\gamma}{n-2\gamma}}\right)d\mu_{g_0}\right|^{1+\delta}$$
if $|z|$ is sufficiently small.
\end{lem}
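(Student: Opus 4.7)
The plan is a finite-dimensional reduction: by Lemma \ref{lem: def of perturbed uinfty}, $z \mapsto \bar{u}_z$ is real analytic, so both
\[
f(z) := E(\bar{u}_z) - E(u_\infty) \quad\text{and}\quad G_a(z) := \int_M \psi_a \left(P_\gamma^{g_0}\bar{u}_z - s_\infty \bar{u}_z^{\frac{n+2\gamma}{n-2\gamma}}\right) d\mu_{g_0}
\]
are real-analytic functions of $z \in \mathbb{R}^{|A|}$ near the origin, and the vanishing $\Pi(P_\gamma^{g_0}\bar{u}_z - s_\infty \bar{u}_z^{(n+2\gamma)/(n-2\gamma)}) = 0$ combined with the definition of $\Pi$ translates into the pointwise identity
\[
P_\gamma^{g_0}\bar{u}_z - s_\infty \bar{u}_z^{\frac{n+2\gamma}{n-2\gamma}} = \sum_{a \in A} G_a(z)\,\psi_a\, u_\infty^{\frac{4\gamma}{n-2\gamma}}.
\]
The goal is to apply the classical Lojasiewicz gradient inequality to $f$ at its critical point $z=0$ (critical because $\bar{u}_0 = u_\infty$ solves \eqref{2.4}), which supplies some $\theta \in [1/2, 1)$ and $C>0$ with $|f(z)|^\theta \leq C |\nabla_z f(z)|$ for $|z|$ small.

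The crucial step is the gradient estimate $|\nabla_z f(z)| \leq C \sup_{a \in A} |G_a(z)|$. Setting
\[
\Lambda(u) := \frac{\int_M u\, P_\gamma^{g_0} u\,d\mu_{g_0}}{\int_M u^{\frac{2n}{n-2\gamma}}\,d\mu_{g_0}},
\]
a direct calculation of the first variation of $E$ gives
\[
\partial_{z_b} f(z) = \frac{2}{\|\bar{u}_z\|_{L^{2n/(n-2\gamma)}}^{2}} \int_M \partial_{z_b} \bar{u}_z \left(P_\gamma^{g_0}\bar{u}_z - \Lambda(\bar{u}_z)\, \bar{u}_z^{\frac{n+2\gamma}{n-2\gamma}}\right) d\mu_{g_0}.
\]
Testing the pointwise identity above against $\bar{u}_z$ produces the scalar relation
\[
(\Lambda(\bar{u}_z) - s_\infty)\int_M \bar{u}_z^{\frac{2n}{n-2\gamma}}\,d\mu_{g_0} = \sum_{a\in A} G_a(z) \int_M \bar{u}_z\, \psi_a\, u_\infty^{\frac{4\gamma}{n-2\gamma}}\,d\mu_{g_0},
\]
so $|\Lambda(\bar{u}_z) - s_\infty| \leq C \sup_{a\in A} |G_a(z)|$. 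Substituting this and the pointwise identity back into the expression for $\partial_{z_b} f$ yields the claimed bound.

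Combining the gradient bound with Lojasiewicz gives $E(\bar{u}_z) - E(u_\infty) \leq C (\sup_{a\in A} |G_a(z)|)^{1/\theta}$. If $\theta \in (1/2, 1)$, I set $\delta := 1/\theta - 1 \in (0,1)$; in the non-degenerate case $\theta = 1/2$, the exponent $1/\theta = 2$ is larger than what we need, and since $\sup_{a\in A}|G_a(z)|$ remains bounded near $z=0$, I absorb an extra factor $\sup_{a\in A}|G_a(z)|^{1-\delta}$ to obtain the inequality with any $\delta \in (0,1)$. The principal obstacle is the gradient estimate itself: because $s_\infty$ is the Lagrange multiplier attached to $u_\infty$ and not to $\bar{u}_z$, the expression $P_\gamma^{g_0}\bar{u}_z - s_\infty\bar{u}_z^{(n+2\gamma)/(n-2\gamma)}$ is not literally the variational derivative of $E$ at $\bar{u}_z$, and the scalar identity for $\Lambda(\bar{u}_z) - s_\infty$ is the decisive bookkeeping step that reconciles the two.
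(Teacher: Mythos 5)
Your argument is correct and is essentially the same route as Brendle's Lemma 6.10 and the cited [ChanSireSun] Lemma 4.4: reduce to the real-analytic finite-dimensional function $f(z)=E(\bar u_z)-E(u_\infty)$ with critical point $z=0$, invoke the classical \L{}ojasiewicz gradient inequality, and close it with the bound $|\nabla_z f(z)|\le C\sup_{a\in A}|G_a(z)|$ obtained from the pointwise consequence of $\Pi(\cdot)=0$ together with the scalar relation $|F(\bar u_z)-s_\infty|\le C\sup_a|G_a(z)|$. The only point worth keeping is your endpoint remark: when \L{}ojasiewicz produces exponent $2$ one must absorb a bounded power of $\sup_a|G_a|$ to land in the stated range $\delta\in(0,1)$, exactly as you do.
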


For every $\nu\in\mathbb{N}$, we denote by $\mathcal{A}_\nu$ the set of all pairs
 $\big(z,(x_k,\varepsilon_k,\alpha_k)_{1\leq k\leq m}\big)\in \mathbb{R}^{|A|}\times (M\times\mathbb{R}_+,\mathbb{R}_+)^m$
such that
$$|z|\leq\zeta$$
and
$$d(x_k,x_{k,\nu}^*)\leq \varepsilon_{k,\nu}^*,~~\frac{1}{2}\leq\frac{\varepsilon_k}{\varepsilon_{k,\nu}^*}\leq 2,~~
\frac{1}{2}\leq\alpha_k\leq 2$$
for all $1\leq k\leq m$. Moreover, we can find a pair
$\big(z_\nu,(x_{k,\nu},\varepsilon_{k,\nu},\alpha_{k,\nu})_{1\leq k\leq m}\big)\in\mathcal{A}_\nu$ such that
\begin{equation}\label{108}
\begin{split}
&\int_M\Big(u_\nu-\bar{u}_{z_\nu}-\sum_{k=1}^m\alpha_{k,\nu}u_{(x_{k,\nu},\varepsilon_{k,\nu})}\Big)P_\gamma^{g_0}\Big(u_\nu-\bar{u}_{z_\nu}-\sum_{k=1}^m\alpha_{k,\nu}u_{(x_{k,\nu},\varepsilon_{k,\nu})}\Big)d\mu_{g_0}\\
&\leq \int_M\Big(u_\nu-\bar{u}_z-\sum_{k=1}^m\alpha_{k}u_{(x_{k},\varepsilon_{k})}\Big)P_\gamma^{g_0}\Big(u_\nu-\bar{u}_z-\sum_{k=1}^m\alpha_{k}u_{(x_{k},\varepsilon_{k})}\Big)d\mu_{g_0}
\end{split}
\end{equation}
for all $\big(z,(x_k,\varepsilon_k,\alpha_k)_{1\leq k\leq m}\big)\in\mathcal{A}_\nu$.

\begin{prop}\label{prop6.6}
(i) For all $i\neq j$, we have
$$\frac{\varepsilon_{i,\nu}}{\varepsilon_{j,\nu}}+\frac{\varepsilon_{j,\nu}}{\varepsilon_{i,\nu}}
+\frac{d(x_{i,\nu},x_{j,\nu})}{\varepsilon_{i,\nu}\varepsilon_{j,\nu}}\to\infty
$$
as $\nu\to\infty$.\\
(ii) We have
$$u_\nu-\bar{u}_{z_\nu}-\sum_{k=1}^m\alpha_{k,\nu}u_{(x_{k,\nu},\varepsilon_{k,\nu})}\to 0~~\mbox{ in }H^\gamma(M)$$
as $\nu\to\infty$.
\end{prop}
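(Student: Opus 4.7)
The strategy mirrors that of Proposition \ref{prop3.1}: part (ii) will come from the minimizing property \eqref{108} by testing against a well-chosen admissible pair, and part (i) from a Brendle-type non-interaction argument.

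For (ii), I would test \eqref{108} against the admissible choice $(z, (x_k,\varepsilon_k,\alpha_k)) = (0, (x^*_{k,\nu}, \varepsilon^*_{k,\nu}, 1)_{1\le k\le m})$, which lies in $\mathcal{A}_\nu$ and satisfies $\bar u_0 = u_\infty$ by Lemma \ref{lem: def of perturbed uinfty}. The right-hand side of \eqref{108} then becomes
\[
    \int_M f_\nu \, P_\gamma^{g_0} f_\nu \,d\mu_{g_0}, \qquad f_\nu := u_\nu - u_\infty - \sum_{k=1}^m u_{(x^*_{k,\nu}, \varepsilon^*_{k,\nu})}.
\]
Since $Y_\gamma(M,[g_0]) > 0$, the bilinear form $\int f\,P_\gamma^{g_0} f$ is equivalent to $\|f\|_{\dot H^\gamma(M)}^2$, and $\|f_\nu\|_{H^\gamma(M)} \to 0$ by Lemma \ref{lem2.1}(iii). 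Consequently the left-hand side of \eqref{108} is $o(1)$, which is exactly assertion (ii).

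For (i), I would follow \cite[Proposition 5.1(i)]{Brendle} as in the proof of Proposition \ref{prop3.1}(i). The additional ingredient here, compared to the case $u_\infty\equiv 0$, is the smooth background $\bar u_{z_\nu}$, but since $z\mapsto \bar u_z$ is real analytic and $|z_\nu|\le\zeta$, the functions $\bar u_{z_\nu}$ are uniformly bounded in $C^\infty$. Hence $\bar u_{z_\nu}$ acts as a harmless bounded background and does not affect the leading-order asymptotics in the bubble-interaction expansion. One argues by contradiction: if the non-interaction failed along a subsequence, the bubble interaction estimates \eqref{eq: fractional conformal Lapalcian interaction term estimate}--\eqref{eq: Main interaction of bubble term estimate} of Proposition \ref{prop: estimation of interaction of bubbles} would allow one to produce a strictly better admissible triple (either by moving the centers apart, or by collapsing two overlapping bubbles), contradicting the minimizing property \eqref{108}.

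I expect the main technical obstacle to be the control of the mixed integrals between $\bar u_{z_\nu}$ and the concentrating bubbles $u_{(x_{k,\nu},\varepsilon_{k,\nu})}$ in the relevant energy expansion. These are of the form $\int_M \bar u_{z_\nu}\,P_\gamma^{g_0} u_{(x,\varepsilon)}\,d\mu_{g_0}$ and $\int_M \bar u_{z_\nu}^p u_{(x,\varepsilon)}^q\,d\mu_{g_0}$; because $\bar u_{z_\nu}$ is uniformly $C^0$-bounded, they scale like $\int_M u_{(x,\varepsilon)}^q\,d\mu_{g_0}$ with standard rates of decay in $\varepsilon$, and they are subleading compared to the bubble-bubble interaction terms of Proposition \ref{prop: estimation of interaction of bubbles}. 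Once this is in place, Brendle's argument transfers verbatim.
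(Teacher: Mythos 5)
Your proof of part (ii) is exactly the paper's argument: test the minimization \eqref{108} against the admissible pair with $z=0$ (so $\bar u_0=u_\infty$) and $(x^*_{k,\nu},\varepsilon^*_{k,\nu},1)$, then invoke Lemma \ref{lem2.1}(iii) and coercivity of the $P_\gamma^{g_0}$-form. For part (i), the paper likewise simply defers to Brendle's Proposition 6.6(i), so your sketch of why the background $\bar u_{z_\nu}$ is harmless is consistent with (and slightly more detailed than) what is written there.
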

\begin{proof}
Part (i) is identical to the proof of \cite[Proposition 6.6(i)]{Brendle}.
For Part (ii), it follows from definition of $\big(z_\nu,(x_{k,\nu},\varepsilon_{k,\nu},\alpha_{k,\nu})_{1\leq k\leq m}\big)$
that
\begin{equation*}
\begin{split}
&\int_M\Big(u_\nu-\bar{u}_{z_\nu}-\sum_{k=1}^m\alpha_{k,\nu}u_{(x_{k,\nu},\varepsilon_{k,\nu})}\Big)P_\gamma^{g_0}\Big(u_\nu-\bar{u}_{z_\nu}-\sum_{k=1}^m\alpha_{k,\nu}u_{(x_{k,\nu},\varepsilon_{k,\nu})}\Big)d\mu_{g_0}\\
&\leq \int_M\Big(u_\nu-u_\infty-\sum_{k=1}^m u_{(x_{k,\nu}^*,\varepsilon_{k,\nu}^*)}\Big)P_\gamma^{g_0}\Big(u_\nu-u_\infty-\sum_{k=1}^m u_{(x_{k,\nu}^*,\varepsilon_{k,\nu}^*)}\Big)d\mu_{g_0}.
\end{split}
\end{equation*}
By Lemma \ref{lem2.1}, the expression on the right hand side tends to $0$
as $\nu\to\infty$. This proves the assertion.
\end{proof}

\begin{prop}\label{prop6.7}
We have
$$|z_\nu|=o(1)$$
and
$$d(x_k,x_{k,\nu}^*)=o(1)\varepsilon_{k,\nu}^*,~~\frac{\varepsilon_k}{\varepsilon_{k,\nu}^*}=1+o(1),~~
\alpha_k=1+o(1)$$
for all $1\leq k\leq m$. In particular,
$\big(z_\nu,(x_{k,\nu},\varepsilon_{k,\nu},\alpha_{k,\nu})_{1\leq k\leq m}\big)$
is an interior point of $\mathcal{A}_\nu$ if $\nu$ is sufficiently large. As a consequence, we have
\begin{equation}\label{eq: Linfty of u_z_nu}
    \|\bar{u}_{z_\nu}\|_{L^\infty(M)}\leq C,
\end{equation}
with a constant $C>0$ independent to $\nu$.
\end{prop}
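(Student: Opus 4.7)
The plan is to mimic the strategy of Proposition \ref{prop3.2} but now we have the extra perturbation parameter $z_\nu$ that has to be controlled separately. The starting point is the minimization property \eqref{108}: choosing the test pair $\big(0,(x_{k,\nu}^*,\varepsilon_{k,\nu}^*,1)_{1\leq k\leq m}\big)$ (which lies in $\mathcal{A}_\nu$ since $\bar u_0=u_\infty$) and combining with Lemma \ref{lem2.1} gives
$$\Big\|u_\nu-\bar{u}_{z_\nu}-\sum_{k=1}^m\alpha_{k,\nu}u_{(x_{k,\nu},\varepsilon_{k,\nu})}\Big\|_{H^\gamma(M)}=o(1).$$
Adding and subtracting $u_\infty+\sum_k u_{(x_{k,\nu}^*,\varepsilon_{k,\nu}^*)}$ and using Lemma \ref{lem2.1} again, the triangle inequality yields
$$\Big\|(\bar{u}_{z_\nu}-u_\infty)+\Big(\sum_{k=1}^m\alpha_{k,\nu}u_{(x_{k,\nu},\varepsilon_{k,\nu})}-\sum_{k=1}^m u_{(x_{k,\nu}^*,\varepsilon_{k,\nu}^*)}\Big)\Big\|_{H^\gamma(M)}=o(1).$$

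Next, I would separate the two contributions using the fact that test bubbles converge weakly to zero in $H^\gamma(M)$ as $\varepsilon_{k,\nu}^*\to 0$, while $\bar u_{z_\nu}-u_\infty$ lives in a fixed finite-dimensional (real-analytic) family parametrized by $z$, hence in particular stays in a subspace of smooth functions. Thus the cross term $\langle \bar u_{z_\nu}-u_\infty,\,\sum_k\alpha_{k,\nu}u_{(x_{k,\nu},\varepsilon_{k,\nu})}-\sum_k u_{(x_{k,\nu}^*,\varepsilon_{k,\nu}^*)}\rangle_{H^\gamma(M)}$ is $o(1)$, and squaring the previous display decouples into
$$\|\bar{u}_{z_\nu}-u_\infty\|_{H^\gamma(M)}=o(1),\qquad \Big\|\sum_{k=1}^m\alpha_{k,\nu}u_{(x_{k,\nu},\varepsilon_{k,\nu})}-\sum_{k=1}^m u_{(x_{k,\nu}^*,\varepsilon_{k,\nu}^*)}\Big\|_{H^\gamma(M)}=o(1).$$

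From the first estimate, the real analyticity and injectivity of $z\mapsto\bar u_z$ near $0$ provided by Lemma \ref{lem: def of perturbed uinfty} (together with $\bar u_0=u_\infty$, which gives a lower bound $\|\bar u_z-u_\infty\|_{H^\gamma}\gtrsim |z|$ for $|z|$ small) force $|z_\nu|=o(1)$. From the second estimate, applying verbatim the argument of Proposition \ref{prop3.2} (which is already established in \cite[Proposition 6.6(ii)]{Brendle} and used here for Proposition \ref{prop6.6}) yields $d(x_{k,\nu},x_{k,\nu}^*)=o(1)\varepsilon_{k,\nu}^*$, $\varepsilon_{k,\nu}/\varepsilon_{k,\nu}^*=1+o(1)$, and $\alpha_{k,\nu}=1+o(1)$. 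In particular, the minimizing tuple is interior to $\mathcal{A}_\nu$ for large $\nu$. Finally, the $L^\infty$ bound \eqref{eq: Linfty of u_z_nu} follows because $z\mapsto\bar u_z$ is real analytic (hence continuous in the $L^\infty$ topology by elliptic regularity for the nonlinear equation characterizing $\bar u_z$ in Lemma \ref{lem: def of perturbed uinfty}) on the compact ball $\{|z|\leq \zeta\}$, and $z_\nu$ lies in the smaller ball $\{|z|\leq o(1)\}$, so $\|\bar u_{z_\nu}\|_{L^\infty(M)}$ stays near $\|u_\infty\|_{L^\infty(M)}$.

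The main obstacle is the decoupling step: proving that the cross term between the smooth piece $\bar u_{z_\nu}-u_\infty$ and the bubble remainder is indeed $o(1)$. This relies on two ingredients that must be made precise for the fractional setting — weak $H^\gamma$-decay of concentrating Schoen bubbles (which follows from scaling and Remark \ref{rmk: positivity of bubbles}), and uniform control of $\bar u_{z_\nu}-u_\infty$ in a norm stronger than $H^\gamma(M)$ (coming from the real-analytic dependence in Lemma \ref{lem: def of perturbed uinfty} together with $|z_\nu|\leq\zeta$). Once this decoupling is secured, everything else is a verbatim adaptation of the $u_\infty\equiv 0$ case.
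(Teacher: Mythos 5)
Your proof is correct and, once unpacked, follows essentially the same route as the paper's. Both start from the minimization property \eqref{108} together with Lemma \ref{lem2.1} and Proposition \ref{prop6.6}(ii) to get
$\bigl\|(\bar{u}_{z_\nu}-u_\infty)+(\sum_k\alpha_{k,\nu}u_{(x_{k,\nu},\varepsilon_{k,\nu})}-\sum_k u_{(x_{k,\nu}^*,\varepsilon_{k,\nu}^*)})\bigr\|_{H^\gamma(M)}=o(1)$, and both then extract $|z_\nu|=o(1)$ by exploiting the defining property of $z$ from Lemma \ref{lem: def of perturbed uinfty}. The paper condenses this by testing $\bar{u}_{z_\nu}-u_\infty$ against the finitely many fixed functions $P_\gamma^{g_0}\psi_a=\lambda_a u_\infty^{4\gamma/(n-2\gamma)}\psi_a$, while you first decouple the two contributions by observing that the $H^\gamma$ cross term between the uniformly smooth family $\{\bar u_z-u_\infty:|z|\leq\zeta\}$ and the concentrating bubble difference is $o(1)$ (because $P_\gamma^{g_0}(\bar u_{z_\nu}-u_\infty)$ is uniformly bounded and $\|u_{(x,\varepsilon)}\|_{L^1(M)}\to 0$); your version simply makes explicit the cancellation that is implicit in the paper's last inequality. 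One small sharpening worth noting: you only need to assert $|z|\lesssim\|\bar u_z-u_\infty\|_{H^\gamma(M)}$ ``for $|z|$ small,'' but in fact the defining property $z_a=\int_M u_\infty^{4\gamma/(n-2\gamma)}(\bar u_z-u_\infty)\psi_a\,d\mu_{g_0}$ gives this on the entire ball $\{|z|\leq\zeta\}$ via Cauchy--Schwarz, which is the cleanest way to rule out $z_\nu$ sitting near the boundary of the ball. The rest (reduction to the $u_\infty\equiv 0$ argument for the bubble parameters, interiority, and the $L^\infty$ bound from continuity of the real-analytic map $z\mapsto\bar u_z$) matches the paper's intent.
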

\begin{proof}
We find
\begin{equation*}
\begin{split}
&\Bigg\|\bar{u}_{z_\nu}+\sum_{k=1}^m\alpha_{k,\nu}u_{(x_{k,\nu},\varepsilon_{k,\nu})}-u_\infty-\sum_{k=1}^m u_{(x_{k,\nu}^*,\varepsilon_{k,\nu}^*)}\Bigg\|_{H^\gamma(M)}\\
&\leq \Bigg\|u_\nu-\bar{u}_{z_\nu}-\sum_{k=1}^m\alpha_{k,\nu}u_{(x_{k,\nu},\varepsilon_{k,\nu})}\Bigg\|_{H^\gamma(M)}
+\Bigg\|u_\nu-u_\infty-\sum_{k=1}^m u_{(x_{k,\nu}^*,\varepsilon_{k,\nu}^*)}\Bigg\|_{H^\gamma(M)}\\
&=o(1)
\end{split}
\end{equation*}
by Lemma \ref{lem2.1} and Proposition \ref{prop6.6}.
From this, 
\begin{align*}
    |z_\nu|&\leq C\sup_{a\in A}\left|\int_M(\bar{u}_{z_\nu}-u_\infty)P_\gamma^{g_0}\psi_a\,d\mu_{g_0}\right|\\
    &\leq C\|\bar{u}_{z_\nu}-u_\infty\|_{H^\gamma(M)}\\
    &\leq C\Bigg\|\bar{u}_{z_\nu}+\sum_{k=1}^m\alpha_{k,\nu}u_{(x_{k,\nu},\varepsilon_{k,\nu})}-u_\infty-\sum_{k=1}^m u_{(x_{k,\nu}^*,\varepsilon_{k,\nu}^*)}\Bigg\|_{H^\gamma(M)}+o(1),
\end{align*}
and the assertion follows.
\end{proof}

As in the previous section, we assume that
$$\varepsilon_{i,\nu}\leq \varepsilon_{j,\nu}~~\mbox{ for }i\leq j.$$
We now decompose the function $u_\nu$ as
$$u_\nu=v_\nu+w_\nu,$$
where
$$v_\nu=\bar{u}_{z_\nu}+\sum_{k=1}^m\alpha_{k,\nu}u_{(x_{k,\nu},\varepsilon_{k,\nu})},$$
and
$$w_\nu=u_\nu-\bar{u}_{z_\nu}-\sum_{k=1}^m\alpha_{k,\nu}u_{(x_{k,\nu},\varepsilon_{k,\nu})}.$$
It follows from Proposition \ref{prop6.6} that
$$\int_Mw_\nu P_\gamma^{g_0} w_\nu d\mu_{g_0}=o(1).$$
In the following, we repeat similar statements as Proposition \ref{prop: some estimates on u^{n+2gamma/n-2gamma}w_nu} and Proposition \ref{prop: estimate on u^{4gamma/n-2gamma}w_nu^2}.

\begin{prop}\label{prop: estimates on w_nu}
(i) For every $a\in A$, we have 
$$\left|\int_Mu_\infty^{\frac{4\gamma}{n-2\gamma}}\psi_a w_\nu d\mu_{g_0}\right|\leq o(1)\|w_\nu\|_{L^1(M)}.$$
(ii) For every $1\leq k\leq m$, we have
$$\left|\int_Mu_{(x_{k,\nu},\varepsilon_{k,\nu})}^{\frac{n+2\gamma}{n-2\gamma}}w_\nu d\mu_{g_0}\right|\leq o(1)\|w_\nu\|_{H^\gamma(M)}.$$
(iii) For every $1\leq k\leq m$, we have
$$\left|\int_Mu_{(x_{k,\nu},\varepsilon_{k,\nu})}^{\frac{n+2\gamma}{n-2\gamma}}\frac{\varepsilon_{k,\nu}^2-d(x_{k,\nu},x)^2}{\varepsilon_{k,\nu}^2+d(x_{k,\nu},x)^2}w_\nu d\mu_{g_0}\right|\leq o(1)\|w_\nu\|_{H^\gamma(M)}.$$
(iv) For all $1\leq k\leq m$,
$$\left|\int_Mu_{(x_{k,\nu},\varepsilon_{k,\nu})}^{\frac{n+2\gamma}{n-2\gamma}}\frac{\varepsilon_{k,\nu}\exp^{-1}_{x_{k,\nu}}(x)}{\varepsilon_{k,\nu}^2+d(x_{k,\nu},x)^2}w_\nu d\mu_{g_0}\right|\leq o(1)\|w_\nu\|_{H^\gamma(M)}.$$
\end{prop}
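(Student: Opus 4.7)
The proof proceeds by exploiting the first-order optimality conditions satisfied by the minimizer $\bigl(z_\nu,(x_{k,\nu},\varepsilon_{k,\nu},\alpha_{k,\nu})_{1\le k\le m}\bigr)$ of the quadratic functional in \eqref{108}. By Proposition \ref{prop6.7}, this tuple is an interior point of $\mathcal{A}_\nu$ for $\nu$ large, so each partial derivative of the functional vanishes at the optimum, producing four orthogonality relations on $w_\nu$.

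For parts (ii), (iii), (iv), since $\bar{u}_{z_\nu}$ does not depend on $\alpha_k$, $\varepsilon_k$ or $x_k$, the optimality conditions are identical to those in Proposition \ref{prop: some estimates on u^{n+2gamma/n-2gamma}w_nu}. Differentiating in $\alpha_k$ gives
\begin{equation*}
\int_M w_\nu\,P_\gamma^{g_0}u_{(x_{k,\nu},\varepsilon_{k,\nu})}\,d\mu_{g_0}=0,
\end{equation*}
and similarly for $\varepsilon_k$ and $x_k$ one obtains analogous relations involving $\partial_\varepsilon u_{(x,\varepsilon)}$ and $\partial_x u_{(x,\varepsilon)}$, whose principal parts carry the factors $(\varepsilon_{k,\nu}^2-d(x_{k,\nu},\cdot)^2)/(\varepsilon_{k,\nu}^2+d(x_{k,\nu},\cdot)^2)$ and $\varepsilon_{k,\nu}\exp^{-1}_{x_{k,\nu}}(\cdot)/(\varepsilon_{k,\nu}^2+d(x_{k,\nu},\cdot)^2)$ appearing in (iii) and (iv). Replacing $P_\gamma^{g_0}u_{(x_{k,\nu},\varepsilon_{k,\nu})}$ by $s_\infty u_{(x_{k,\nu},\varepsilon_{k,\nu})}^{(n+2\gamma)/(n-2\gamma)}$ with an $L^{\frac{2n}{n+2\gamma}}$ error $o(1)$ via \eqref{eq: L 2n / n+2gamma estimate on bubble}, and then bounding the resulting error term by H\"older's inequality together with the Sobolev embedding $H^\gamma(M)\hookrightarrow L^{\frac{2n}{n-2\gamma}}(M)$, one concludes.

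Part (i) is the new ingredient introduced by the perturbation $\bar{u}_{z_\nu}$. Differentiating the quadratic functional in the direction $z_a$ for $a\in A$ gives
\begin{equation*}
\int_M w_\nu\,P_\gamma^{g_0}\bigl(\partial_{z_a}\bar{u}_{z_\nu}\bigr)\,d\mu_{g_0}=0.
\end{equation*}
The key step is the identification $\partial_{z_a}\bar{u}_0=\psi_a$. Expanding $\partial_{z_a}\bar{u}_0=\sum_{b\in\mathbb{N}} c_b\psi_b$ and linearizing the two defining relations of Lemma \ref{lem: def of perturbed uinfty} at $z=0$: the constraint $\int_M u_\infty^{4\gamma/(n-2\gamma)}(\bar{u}_z-u_\infty)\psi_b\,d\mu_{g_0}=z_b$ combined with the orthonormality \eqref{eq: def of inner product and orthogonality} forces $c_b=\delta_{ab}$ for $b\in A$, while testing the linearization of $\Pi\bigl(P_\gamma^{g_0}\bar{u}_z-s_\infty \bar{u}_z^{(n+2\gamma)/(n-2\gamma)}\bigr)=0$ against $\psi_c$ for $c\notin A$ yields $\bigl(\lambda_c-\tfrac{n+2\gamma}{n-2\gamma}s_\infty\bigr)c_c=0$, hence $c_c=0$ by the definition of $A$. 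The real analyticity of $z\mapsto\bar{u}_z$ and $z_\nu\to 0$ (Proposition \ref{prop6.7}) then yield smooth convergence $\partial_{z_a}\bar{u}_{z_\nu}\to\psi_a$, whence $P_\gamma^{g_0}(\partial_{z_a}\bar{u}_{z_\nu})\to\lambda_a u_\infty^{4\gamma/(n-2\gamma)}\psi_a$ in $L^\infty(M)$. Substituting into the optimality relation and dualizing via $L^\infty$--$L^1$ produces the claimed $o(1)\|w_\nu\|_{L^1(M)}$ bound, using that $\lambda_a>0$.

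The principal obstacle I anticipate is the bookkeeping in the identification $\partial_{z_a}\bar{u}_0=\psi_a$, because the projection $\Pi$ is defined asymmetrically with respect to the weighted inner product \eqref{eq: def of inner product and orthogonality}, as flagged in the remark following its definition. Once this step is secured, the remainder of (i) and all of (ii)--(iv) amount to routine applications of H\"older's inequality and the Sobolev embedding.
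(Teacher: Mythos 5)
Your proposal is correct and follows essentially the same route as the paper's proof. For (ii)--(iv), the paper likewise differentiates the minimizing functional in $\alpha_k,\varepsilon_k,x_k$ (noting $\bar{u}_{z_\nu}$ is independent of these), replaces $P_\gamma^{g_0}u_{(x_{k,\nu},\varepsilon_{k,\nu})}$ by $s_\infty u_{(x_{k,\nu},\varepsilon_{k,\nu})}^{(n+2\gamma)/(n-2\gamma)}$ via \eqref{eq: L 2n / n+2gamma estimate on bubble}, and concludes by H\"older and Sobolev. For (i), the paper also sets $\tilde\psi_{a,z}:=\partial_{z_a}\bar u_z$, records the optimality relation $\int_M w_\nu\,P_\gamma^{g_0}\tilde\psi_{a,z_\nu}\,d\mu_{g_0}=0$, and then shows $\|P_\gamma^{g_0}(\psi_a-\tilde\psi_{a,z_\nu})\|_{L^\infty(M)}=o(1)$ before dualizing. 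The only (minor) stylistic difference is in how that last estimate is obtained: you compute the exact identification $\partial_{z_a}\bar u_0=\psi_a$ and then invoke real analyticity plus $z_\nu\to 0$, whereas the paper works directly at $z=z_\nu$, deriving the weighted $L^2$ coefficients of $\tilde\psi_{a,z_\nu}$ (exactly $\delta_{ab}$ for $b\in A$ and $o(1)$ for $b\notin A$ using $\bar u_{z_\nu}=u_\infty+o(1)$ and $\lambda_{a'}>\tfrac{n+2\gamma}{n-2\gamma}s_\infty$). Your observation that the non-canonical projection $\Pi$ requires care is valid, and your computation handles it correctly: testing $\Pi(L)$ against $\psi_c$ by integrating (not by the weighted pairing) recovers $\int_M\psi_c L\,d\mu_{g_0}$ for $c\notin A$ precisely because $\int_M\psi_{a'}u_\infty^{4\gamma/(n-2\gamma)}\psi_c\,d\mu_{g_0}=\delta_{a'c}$.
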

\begin{proof}
(i) In the view of Lemma \ref{lem: def of perturbed uinfty}, define
\[
    \tilde{\psi}_{a,z}:=\frac{\partial}{\partial z_a}\bar{u}_z\bigg.
\]
Then from the definition of $z_\nu$, we have
\begin{equation}\label{eq: L1 estimate proof 1}
    \int_M w_\nu P_\gamma^{g_0}\tilde{\psi}_{a,z_\nu}\,d\mu_{g_0}=0.
\end{equation}
Also, from the definition of $\bar{u}_z$,
\begin{equation}\label{eq: L1 estimate proof 2}
    \int_M u_\infty^{\frac{4\gamma}{n-2\gamma}}\tilde{\psi}_{a,z}\psi_a\,d\mu_{g_0}=1,
\end{equation}
and for all $a'\in A$ such that $a'\neq a$,
\begin{equation}\label{eq: L1 estimate proof 3}
    \int_M u_\infty^{\frac{4\gamma}{n-2\gamma}}\tilde{\psi}_{a,z}\psi_{a'}\,d\mu_{g_0}=0.
\end{equation}
On the other hand, for $a'\notin A$ from Lemma \ref{lem6.3}, 
\[
    \begin{split}
        0&=\frac{\partial}{\partial z_a}\Pi\left(P_\gamma^{g_0}\bar{u}_{z_\nu}-s_\infty \bar{u}_{z_\nu}^{\frac{n+2\gamma}{n-2\gamma}}\right)\\
        &=\sum_{a'\not\in A}\left(\int_M \psi_{a'}\left(P_\gamma^{g_0}\tilde{\psi}_{a,z_\nu}-s_\infty\frac{n+2\gamma}{n-2\gamma} \bar{u}_{z_\nu}^{\frac{4\gamma}{n-2\gamma}}\tilde{\psi}_{a,z_\nu}\right)\,d\mu_{g_0}\right)\psi_{a'}u_\infty^{\frac{4\gamma}{n-2\gamma}},
    \end{split}
\]
thus, it follows that for any $a'\notin A$,
\[
    s_\infty\frac{n+2\gamma}{n-2\gamma}\int_M\bar{u}_{z_\nu}^{\frac{4\gamma}{n-2\gamma}}\psi_{a'}\tilde{\psi}_{a,z_\nu}\,d\mu_{g_0}=\lambda_{a'}\int_Mu_\infty^{\frac{4\gamma}{n-2\gamma}}\psi_{a'}\tilde{\psi}_{a,z_\nu} \, d\mu_{g_0}.
\]
Note that $\lambda_{a'}>s_\infty\frac{n+2\gamma}{n-2\gamma}$. Also, from Proposition \ref{prop6.7}, $\bar{u}_{z_\nu}=u_\infty+o(1)$. Combine these facts, we obtain
\begin{equation}\label{eq: L1 estimate proof 4}
    \int_M \bar{u}_\infty^{\frac{4\gamma}{n-2\gamma}}\tilde{\psi}_{a,z_\nu}\psi_{a'}\,d\mu_{g_0}=o(1),\quad\forall a'\notin A.
\end{equation}
From \eqref{eq: L1 estimate proof 2}, \eqref{eq: L1 estimate proof 3}, and \eqref{eq: L1 estimate proof 4}, it follows that
\[
    \|P_\gamma^{g_0}(\psi_a-\tilde{\psi}_{a,z_\nu})\|_{L^\infty(M)}=o(1).
\]
Finally, with \eqref{eq: L1 estimate proof 1}, we have
\[
    \begin{split}
        \lambda_a\int_Mu_\infty^{\frac{4\gamma}{n-2\gamma}}\psi_aw_\nu\,d\mu_{g_0}&=\int_M w_\nu P_\gamma^{g_0}\psi_a\\
        &=\int_M w_\nu P_\gamma^{g_0}(\psi_a-\tilde{\psi}_{a,z_\nu})\,d\mu_{g_0}\\
        &\leq \|P_\gamma^{g_0}(\psi_a-\tilde{\psi}_{a,z_\nu})\|_{L^\infty(M)}\|w_\nu\|_{L^1(M)}\\
        &=o(1)\|w_\nu\|_{L^1(M)}.
    \end{split}
\]
\\
(ii) By definition of  $\big(z_\nu,(x_{k,\nu},\varepsilon_{k,\nu},\alpha_{k,\nu})_{1\leq k\leq m}\big)$, we have
$$\int_Mw_\nu P_\gamma^{g_0}u_{(x_{k,\nu},\varepsilon_{k,\nu})} d\mu_{g_0}=0,$$
for all $1\leq k\leq m$. Using the estimate \eqref{eq: L 2n / n+2gamma estimate on bubble}
$$\Big\|P_\gamma^{g_0}u_{(x_{k,\nu},\varepsilon_{k,\nu})}-s_\infty u_{(x_{k,\nu},\varepsilon_{k,\nu})}^{\frac{n+2\gamma}{n-2\gamma}}\Big\|_{L^{\frac{2n}{n+2\gamma}}(M)}=o(1),$$
we conclude that
\begin{equation*}
\begin{split}
&\left|s_\infty \int_Mu_{(x_{k,\nu},\varepsilon_{k,\nu})}^{\frac{n+2\gamma}{n-2\gamma}}w_\nu d\mu_{g_0}\right|\\
&=\left|\int_M\Big(P_\gamma^{g_0}u_{(x_{k,\nu},\varepsilon_{k,\nu})}-s_\infty u_{(x_{k,\nu},\varepsilon_{k,\nu})}^{\frac{n+2\gamma}{n-2\gamma}}\Big)w_\nu d\mu_{g_0}\right|\\
&\leq \Big\|P_\gamma^{g_0}u_{(x_{k,\nu},\varepsilon_{k,\nu})}-s_\infty u_{(x_{k,\nu},\varepsilon_{k,\nu})}^{\frac{n+2\gamma}{n-2\gamma}}\Big\|_{L^{\frac{2n}{n+2\gamma}}(M)}
\|w_\nu\|_{L^{\frac{2n}{n-2\gamma}}(M)}\\
&\leq o(1)\|w_\nu\|_{H^\gamma(M)}
\end{split}
\end{equation*}
for all $1\leq k\leq m$.

The remaining statements follow similarly.
\end{proof}

\begin{prop}\label{prop6.9}
If $\nu$ is sufficiently large, then we have
\begin{equation*}
\frac{n+2\gamma}{n-2\gamma}s_\infty\int_M\Big(u_\infty^{\frac{4\gamma}{n-2\gamma}}+\sum_{j=1}^m  u_{(x_{k,\nu},\varepsilon_{k,\nu})}^{\frac{4\gamma}{n-2\gamma}}\Big)w_\nu^2 d\mu_{g_0}
\leq (1-c)\int_M w_\nu P_\gamma^{g_0} w_\nu d\mu_{g_0}
\end{equation*}
for some positive constant $c$ independent of $\nu$.
\end{prop}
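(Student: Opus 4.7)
The plan is to mimic the contradiction argument of Proposition \ref{prop: estimate on u^{4gamma/n-2gamma}w_nu^2}, now allowing an additional concentration scale coming from the nontrivial limit $u_\infty$. Assuming the claim fails, after rescaling one obtains $\tilde{w}_\nu \in H^\gamma(M)$ with
\[
    \int_M \tilde{w}_\nu P_\gamma^{g_0}\tilde{w}_\nu\,d\mu_{g_0}=1\quad\text{and}\quad \lim_{\nu\to\infty}\frac{n+2\gamma}{n-2\gamma}s_\infty\int_M\Big(u_\infty^{\frac{4\gamma}{n-2\gamma}}+\sum_{j=1}^m u_{(x_{j,\nu},\varepsilon_{j,\nu})}^{\frac{4\gamma}{n-2\gamma}}\Big)\tilde{w}_\nu^2\,d\mu_{g_0}\geq 1.
\]
Keeping the annular regions $\Omega_{j,\nu}$ and the scale $N_\nu$ from the proof of Proposition \ref{prop: estimate on u^{4gamma/n-2gamma}w_nu^2}, I would further introduce the macroscopic region $\Omega_{0,\nu}:=M\setminus\bigcup_{j=1}^m B_{N_\nu\varepsilon_{j,\nu}}(x_{j,\nu})$ and apply a pigeonhole over $j\in\{0,1,\dots,m\}$ to pinpoint an index where local energy is controlled by local mass.

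If the pigeonhole selects $j_0\geq 1$, i.e.\ $\lim_\nu\int_M u_{(x_{j_0,\nu},\varepsilon_{j_0,\nu})}^{\frac{4\gamma}{n-2\gamma}}\tilde{w}_\nu^2\,d\mu_{g_0}>0$ together with the corresponding local energy bound, then the argument of Proposition \ref{prop: estimate on u^{4gamma/n-2gamma}w_nu^2} applies verbatim: rescaling around $x_{j_0,\nu}$ at scale $\varepsilon_{j_0,\nu}$, the sequence converges weakly in $H^\gamma_{\mathrm{loc}}(\mathbb{R}^n)$ to a nontrivial $\hat{w}$ satisfying the spectral inequality \eqref{3.9} and the orthogonality relations \eqref{3.10}, now read off from Proposition \ref{prop: estimates on w_nu}(ii)--(iv); by \cite[Lemma 5.1]{ChanSireSun} this forces $\hat{w}\equiv 0$, a contradiction.

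If instead $j_0=0$ is chosen, then $\lim_\nu\int_M u_\infty^{\frac{4\gamma}{n-2\gamma}}\tilde{w}_\nu^2\,d\mu_{g_0}>0$ with the analogous energy control on $\Omega_{0,\nu}$. By compact embedding $H^\gamma(M)\hookrightarrow L^2(M)$, a subsequence $\tilde{w}_\nu\rightharpoonup\tilde{w}$ weakly in $H^\gamma$ and strongly in $L^2$; since bubbles concentrate and weakly vanish while $u_\infty\in L^\infty(M)$, the macroscopic mass transfers to the limit: $\int_M u_\infty^{\frac{4\gamma}{n-2\gamma}}\tilde{w}^2\,d\mu_{g_0}>0$. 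Passing Proposition \ref{prop: estimates on w_nu}(i) to the limit gives $\int_M u_\infty^{\frac{4\gamma}{n-2\gamma}}\psi_a\tilde{w}\,d\mu_{g_0}=0$ for every $a\in A$. The weighted-$L^2$ expansion of Proposition \ref{prop6.1} then reads $\tilde{w}=\sum_{a\notin A}c_a\psi_a$, and Parseval yields
\[
    \int_M\tilde{w}P_\gamma^{g_0}\tilde{w}\,d\mu_{g_0}=\sum_{a\notin A}\lambda_a c_a^2,\qquad \int_M u_\infty^{\frac{4\gamma}{n-2\gamma}}\tilde{w}^2\,d\mu_{g_0}=\sum_{a\notin A}c_a^2.
\]
The definition of $A$ together with $\lambda_a\to\infty$ provides a uniform spectral gap $\lambda_a\geq \frac{n+2\gamma}{n-2\gamma}s_\infty+\delta_0$ for all $a\notin A$, and combining this with weak lower semicontinuity $\int\tilde{w}P_\gamma^{g_0}\tilde{w}\,d\mu_{g_0}\leq 1$ contradicts the rescaled hypothesis in this regime.

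The main obstacle I anticipate is the genuinely nonlocal energy splitting: unlike in the classical setting, $\int_{\Omega_{j,\nu}}\tilde{w}_\nu P_\gamma^{g_0}\tilde{w}_\nu\,d\mu_{g_0}$ is not additive over the partition $\{\Omega_{j,\nu}\}_{j=0}^m$, so both the partition of unity used to write the contradiction hypothesis and the pigeonhole step require extra care. I expect to handle this by lifting to the Caffarelli--Silvestre extension, whose Dirichlet integral is truly additive on disjoint subsets of $X$, and by using the interaction estimates of Proposition \ref{prop: estimation of interaction of bubbles} together with the profile decomposition framework of \cite{PalatucciPisante,FangG} to absorb the resulting cross terms across annuli.
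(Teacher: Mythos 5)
Your proof takes essentially the same route as the paper: a contradiction-plus-rescaling argument, a dichotomy between a macroscopic scale and a bubble scale, with the bubble case reduced to Proposition~\ref{prop: estimate on u^{4gamma/n-2gamma}w_nu^2} and the macroscopic case handled by a weak limit, the spectral expansion of Proposition~\ref{prop6.1}, and the orthogonality to the $A$-modes from Proposition~\ref{prop: estimates on w_nu}(i). The subtlety you flag about the nonlocal energy not being additive over the regions $\Omega_{j,\nu}$ is real, and the paper's proof also passes over it implicitly when it asserts the dichotomy ``there are only two possibilities''; your proposal to control the cross terms by lifting to the Caffarelli--Silvestre extension together with the interaction estimates of Proposition~\ref{prop: estimation of interaction of bubbles} is the natural way to make that step rigorous.
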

\begin{proof}
Suppose it were not true. Upon rescaling, we obtain a subsequence of functions $\{\tilde{w}_\nu:\nu\in\mathbb{N}\}$ such that
\begin{equation}\label{123}
\int_M\widetilde{w}_\nu P_\gamma^{g_0} \widetilde{w}_\nu d\mu_{g_0}=1
\end{equation}
and
\begin{equation}\label{124}
\lim_{\nu\to\infty}\frac{n+2\gamma}{n-2\gamma}s_\infty\int_M\Big(u_\infty^{\frac{4\gamma}{n-2\gamma}}+\sum_{j=1}^m  u_{(x_{k,\nu},\varepsilon_{k,\nu})}^{\frac{4\gamma}{n-2\gamma}}\Big)\widetilde{w}_\nu^2 d\mu_{g_0}
\geq 1.
\end{equation}
From (\ref{123}), we have
\begin{equation}\label{125}
\int_M|\widetilde{w}_\nu|^{\frac{2n}{n-2\gamma}} d\mu_{g_0}
\leq Y_\gamma(M,[g_0])^{-\frac{n}{n-2\gamma}}.
\end{equation}
In view of Lemma \ref{lem2.1}, there exists a sequence $\{N_\nu:\nu\in\mathbb{N}\}$
such that $N_\nu\to\infty$, $N_\nu\varepsilon_{j,\nu}\to 0$ for all $1\leq j\leq m$, and
\begin{equation}\label{126}
\frac{1}{N_\nu}\frac{\varepsilon_{j,\nu}+d(x_{i,\nu},x_{j,\nu})}{\varepsilon_{i,\nu}}\to\infty
\end{equation}
for all $i<j$. Let
$$\Omega_{j,\nu}=B_{N_\nu\varepsilon_{j,\nu}}(x_{j,\nu})\setminus\bigcup_{i=1}^{j-1} B_{N_\nu\varepsilon_{i,\nu}}(x_{i,\nu})$$
for every $1\leq j\leq m$. In view of (\ref{123}) and (\ref{124}),
there are only two possibilities:\\
Case 1. Suppose that
\begin{equation}\label{127}
\lim_{\nu\to\infty}\int_M u_\infty^{\frac{4\gamma}{n-2\gamma}}\widetilde{w}_\nu^2 d\mu_{g_0}>0
\end{equation}
and
\begin{equation}\label{128}
\lim_{\nu\to\infty}\int_{M\setminus\cup_{j=1}^m\Omega_{j,\nu}}\widetilde{w}_\nu P_\gamma^{g_0} \widetilde{w}_\nu d\mu_{g_0}
\leq \lim_{\nu\to\infty}\frac{n+2\gamma}{n-2\gamma}s_\infty\int_M u_\infty^{\frac{4\gamma}{n-2\gamma}}\widetilde{w}_\nu^2 d\mu_{g_0}.
\end{equation}
Let $\widetilde{w}$ be the weak limit of the sequence $\{\widetilde{w}_\nu:\nu\in\mathbb{N}\}$.
Then the function $\widetilde{w}$ satisfies
$$\int_M u_\infty^{\frac{4\gamma}{n-2\gamma}}\widetilde{w}^2 d\mu_{g_0}>0$$
and
$$\int_M\widetilde{w} P_\gamma^{g_0} \widetilde{w} d\mu_{g_0}
\leq \frac{n+2\gamma}{n-2\gamma}s_\infty\int_M u_\infty^{\frac{4\gamma}{n-2\gamma}}\widetilde{w}^2 d\mu_{g_0}.$$
Expanding $\tilde{w}$ in orthogonal basis $(\psi_a)$, this implies 
$$\sum_{a\in\mathbb{N}}\lambda_a \left(\int_Mu_\infty^{\frac{4\gamma}{n-2\gamma}}\psi_a\widetilde{w}d\mu_{g_0}\right)^2
\leq\sum_{a\in\mathbb{N}}\frac{n+2\gamma}{n-2\gamma}s_\infty\left(\int_Mu_\infty^{\frac{4\gamma}{n-2\gamma}}\psi_a\widetilde{w}d\mu_{g_0}\right).$$
Using Proposition \ref{prop: estimates on w_nu}, we obtain
$$\int_Mu_\infty^{\frac{4\gamma}{n-2\gamma}}\psi_a\widetilde{w}d\mu_{g_0}=0$$
for all $a\in A$. Thus, we conclude that $\widetilde{w}\equiv 0$, which is a contradiction.\\
Case 2. Suppose there exists an integer $1\leq j\leq m$ such that
\begin{equation}\label{129}
\lim_{\nu\to\infty}\int_M u_{(x_{j,\nu},\varepsilon_{j,\nu})}^{\frac{4\gamma}{n-2\gamma}}\widetilde{w}_\nu^2 d\mu_{g_0}>0
\end{equation}
and
\begin{equation}\label{130}
\lim_{\nu\to\infty}\int_{\Omega_{j,\nu}}\widetilde{w}_\nu P_\gamma^{g_0} \widetilde{w}_\nu d\mu_{g_0}
\leq \lim_{\nu\to\infty}\frac{n+2\gamma}{n-2\gamma}s_\infty\int_M u_{(x_{j,\nu},\varepsilon_{j,\nu})}^{\frac{4\gamma}{n-2\gamma}}\widetilde{w}_\nu^2 d\mu_{g_0}.
\end{equation}
We now define a sequence of functions $\hat{w}_\nu: T_{x_{j,\nu}}M\to\mathbb{R}$ by
$$\hat{w}_\nu(\xi)=\varepsilon_{j,\nu}^{\frac{n-2\gamma}{2}}\widetilde{w}_\nu(\exp_{x_{j,\nu}}(\varepsilon_{j,\nu}\xi))$$
for $\xi\in T_{x_{j,\nu}}M$.
The sequence $\{\hat{w}_\nu:\nu\in\mathbb{N}\}$ satisfies
$$\lim_{\nu\to\infty}\int_{\{\xi\in T_{x_{j,\nu}}M: |\xi|\leq N_\nu\}}\hat{w}_\nu(\xi)(-\Delta_{\mathbb{R}^n})^\gamma \hat{w}_\nu(\xi)d\xi
\leq 1$$
and
$$\lim_{\nu\to\infty}\int_{\{\xi\in T_{x_{j,\nu}}M: |\xi|\leq N_\nu\}}|\hat{w}_\nu(\xi)|^{\frac{2n}{n-2\gamma}}d\xi\leq Y_\gamma(M,[g_0])^{-\frac{n}{n-2\gamma}}.$$
Hence, if we take the weak limit as $\nu\to\infty$
such that $\hat{w}_\nu \rightharpoonup \hat{w}$, we then obtain a function $\hat{w}:\mathbb{R}^n\to\mathbb{R}$
satisfying
\begin{equation}\label{5.10}
\int_{\mathbb{R}^n}\frac{\hat{w}_\nu(\xi)^2}{(1+|\xi|^2)^{2\gamma}}d\xi>0
\end{equation}
and
\begin{equation}\label{5.11}
\int_{\mathbb{R}^n}\hat{w}(\xi) (-\Delta_{\mathbb{R}^n})^\gamma\hat{w}(\xi)d\xi\leq\alpha_{n,\gamma}^{\frac{4\gamma}{n-2\gamma}}\frac{n+2\gamma}{n-2\gamma}
\int_{\mathbb{R}^n}\frac{\hat{w}(\xi)^2}{(1+|\xi|^2)^{2\gamma}}d\xi
\end{equation}
where $\alpha_{n\gamma}$ is a constant in (\ref{3.11}).
Moreover, it follows from Proposition \ref{prop: estimates on w_nu} that
\begin{equation}\label{5.12}
\begin{split}
\int_{\mathbb{R}^n}\left(\frac{1}{1+|\xi|^2}\right)^{\frac{n+2\gamma}{2}}\hat{w}(\xi)d\xi&=0,\\
\int_{\mathbb{R}^n}\left(\frac{1}{1+|\xi|^2}\right)^{\frac{n+2\gamma}{2}}\frac{1-|\xi|^2}{1+|\xi|^2}\hat{w}(\xi)d\xi&=0,\\
\int_{\mathbb{R}^n}\left(\frac{1}{1+|\xi|^2}\right)^{\frac{n+2\gamma}{2}}\frac{\xi}{1+|\xi|^2}\hat{w}(\xi)d\xi&=0.
\end{split}
\end{equation}
Since $\hat{w}$ satisfies (\ref{5.11}) and (\ref{5.12}),
we can follow the argument in the proof of \cite[Lemma 5.1]{ChanSireSun}
to conclude that $\hat{w}\equiv 0$, which contradicts to (\ref{5.10}).
This proves the proposition.
\end{proof}

\begin{cor}\label{cor6.10}
 If $\nu$ is sufficiently large, then we have
\begin{equation*}
\frac{n+2\gamma}{n-2\gamma}s_\infty\int_Mv_\nu^{\frac{4\gamma}{n-2\gamma}}w_\nu^2 d\mu_{g_0}
\leq (1-c)\int_M w_\nu P_\gamma^{g_0} w_\nu d\mu_{g_0}
\end{equation*}
for some positive constant $c$ independent of $\nu$.
\end{cor}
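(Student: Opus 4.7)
The plan is to deduce Corollary \ref{cor6.10} from Proposition \ref{prop6.9} by replacing the combination $u_\infty^{\frac{4\gamma}{n-2\gamma}}+\sum_{k=1}^m u_{(x_{k,\nu},\varepsilon_{k,\nu})}^{\frac{4\gamma}{n-2\gamma}}$ with $v_\nu^{\frac{4\gamma}{n-2\gamma}}$ up to an error that vanishes. This parallels the derivation of Corollary \ref{cor3.5} from Proposition \ref{prop: estimate on u^{4gamma/n-2gamma}w_nu^2} in the $u_\infty\equiv 0$ case, with the additional contribution from $\bar{u}_{z_\nu}$.

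First, I would prove the pointwise-on-average comparison
\[
\int_M\Big|v_\nu^{\frac{4\gamma}{n-2\gamma}} - u_\infty^{\frac{4\gamma}{n-2\gamma}} -\sum_{k=1}^m u_{(x_{k,\nu},\varepsilon_{k,\nu})}^{\frac{4\gamma}{n-2\gamma}}\Big|^{\frac{n}{2\gamma}}d\mu_{g_0}=o(1). \tag{$\ast$}
\]
By Proposition \ref{prop6.7} one has $|z_\nu|=o(1)$ and $\alpha_{k,\nu}=1+o(1)$; combined with the real-analytic dependence of $z\mapsto\bar{u}_z$ (Lemma \ref{lem: def of perturbed uinfty}) and the uniform bound \eqref{eq: Linfty of u_z_nu}, this gives $\bar{u}_{z_\nu}\to u_\infty$ in $L^\infty(M)$. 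Hence $(\ast)$ reduces to
\[
\int_M\Big|v_\nu^{\frac{4\gamma}{n-2\gamma}} - \bar{u}_{z_\nu}^{\frac{4\gamma}{n-2\gamma}} -\sum_{k=1}^m \alpha_{k,\nu}^{\frac{4\gamma}{n-2\gamma}}u_{(x_{k,\nu},\varepsilon_{k,\nu})}^{\frac{4\gamma}{n-2\gamma}}\Big|^{\frac{n}{2\gamma}}d\mu_{g_0}=o(1),
\]
which I would establish by applying the elementary inequality $\bigl|(\sum a_i)^p - \sum a_i^p\bigr|\leq C_p\sum_{i\neq j} a_i^{\min(p,1)} a_j^{\max(p-1,0)}$ with $p=\frac{4\gamma}{n-2\gamma}$ and $a_0=\bar{u}_{z_\nu},\ a_k=\alpha_{k,\nu} u_{(x_{k,\nu},\varepsilon_{k,\nu})}$. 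The $L^\infty$-boundedness of $\bar{u}_{z_\nu}$, the bubble normalization $\|u_{(x_{k,\nu},\varepsilon_{k,\nu})}\|_{L^{2n/(n-2\gamma)}}\leq C$, their concentration at $x_{k,\nu}$ on scale $\varepsilon_{k,\nu}\to 0$, and the interaction decay \eqref{eq: nonlinearity interaction term estimate} together force each cross-term integral to be $o(1)$ in $L^{n/(2\gamma)}$.

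Once $(\ast)$ is in hand, I apply Hölder's inequality with exponents $\frac{n}{2\gamma}$ and $\frac{n}{n-2\gamma}$ and the Sobolev embedding $H^\gamma(M)\hookrightarrow L^{\frac{2n}{n-2\gamma}}(M)$ to get
\[
\Bigl|\int_M\!\Bigl(v_\nu^{\frac{4\gamma}{n-2\gamma}}-u_\infty^{\frac{4\gamma}{n-2\gamma}}-\sum_{k=1}^m u_{(x_{k,\nu},\varepsilon_{k,\nu})}^{\frac{4\gamma}{n-2\gamma}}\Bigr)w_\nu^2\,d\mu_{g_0}\Bigr|
\leq o(1)\,\|w_\nu\|_{H^\gamma(M)}^2,
\]
which, combined with Proposition \ref{prop6.9}, yields the claimed inequality after absorbing the $o(1)$ into the positive constant $c$.

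The main obstacle is step $(\ast)$: one must carefully localize on the disparate length scales $\varepsilon_{1,\nu}\leq\cdots\leq\varepsilon_{m,\nu}$ and the $O(1)$ scale of $\bar{u}_{z_\nu}$, and treat the cases $\frac{4\gamma}{n-2\gamma}\leq 1$ and $>1$ separately when applying the elementary inequality. This is essentially the same bookkeeping that underlies the one-line proof of Corollary \ref{cor3.5}; the only genuine novelty is accommodating the extra $\bar{u}_{z_\nu}$ summand, which is harmless thanks to its $L^\infty$-boundedness.
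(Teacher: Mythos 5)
Your overall plan matches the paper's: both derive Corollary \ref{cor6.10} from Proposition \ref{prop6.9} via the $L^{n/(2\gamma)}$ estimate
\[
\int_M\Big|v_\nu^{\frac{4\gamma}{n-2\gamma}}-u_\infty^{\frac{4\gamma}{n-2\gamma}}-\sum_{k=1}^m u_{(x_{k,\nu},\varepsilon_{k,\nu})}^{\frac{4\gamma}{n-2\gamma}}\Big|^{\frac{n}{2\gamma}}d\mu_{g_0}=o(1)
\]
followed by H\"older with exponents $\tfrac{n}{2\gamma}$, $\tfrac{n}{n-2\gamma}$ and the Sobolev embedding; the paper leaves this estimate as a one-liner and you attempt to fill in the details, which is the right instinct.

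The one place where your write-up does not close is the elementary inequality in the sublinear regime $p=\tfrac{4\gamma}{n-2\gamma}\le1$, i.e.\ $n\ge6\gamma$. For such $p$ you have $\min(p,1)=p$ and $\max(p-1,0)=0$, so your stated bound degenerates to $\big|(\sum_ia_i)^p-\sum_ia_i^p\big|\le C_p\sum_{i\neq j}a_i^p$. This is trivially true but destroys all cancellation: raising the right-hand side to the power $\tfrac{n}{2\gamma}$ and integrating only recovers $\sum_k\int_Mu_{(x_{k,\nu},\varepsilon_{k,\nu})}^{\frac{2n}{n-2\gamma}}d\mu_{g_0}$, which stays bounded away from zero. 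What is actually needed for $p\le1$ is the two-term estimate $a^p+b^p-(a+b)^p\le\min(a,b)^p$, so the cross-term must be $\min(a_i,a_j)^p$ rather than $a_i^p$. With that correction the rest of your outline works: for the pair $(\bar u_{z_\nu},u_{(x_{k,\nu},\varepsilon_{k,\nu})})$ split $M$ at radius $\sqrt{\varepsilon_{k,\nu}}$ about $x_{k,\nu}$, using $\bar u_{z_\nu}\in L^\infty$ uniformly (via \eqref{eq: Linfty of u_z_nu}) on the inner region and the decay of the bubble on the outer region, yielding $O(\varepsilon_{k,\nu}^{n/2})$; the bubble--bubble cross-terms are $o(1)$ by \eqref{eq: nonlinearity interaction term estimate} together with the separation of scales in Proposition \ref{prop6.6}(i). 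For $p>1$ your version of the inequality is the standard one and your argument goes through as written.
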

\begin{proof}
By definition of $v_\nu$, we have
$$\int_M\Big|v_\nu^{\frac{4\gamma}{n-2\gamma}}-u_\infty^{\frac{4\gamma}{n-2\gamma}}-\sum_{j=1}^m  u_{(x_{k,\nu},\varepsilon_{k,\nu})}^{\frac{4\gamma}{n-2\gamma}}\Big|^{\frac{n}{2\gamma}}d\mu_{g_0}=o(1).$$
Hence, the assertion follows from Proposition \ref{prop6.9}.
\end{proof}

\begin{lem}\label{lem6.11}
 The difference $u_\nu-\bar{u}_{z_\nu}$ satisfies the estimate
$$\|u_\nu-\bar{u}_{z_\nu}\|_{L^{\frac{n+2\gamma}{n-2\gamma}}(M)}^{\frac{n+2\gamma}{n-2\gamma}}
\leq C\Big\|u_\nu^{\frac{n+2\gamma}{n-2\gamma}}(R_\gamma^{g_\nu}-s_\infty)\Big\|^{\frac{n+2\gamma}{n-2\gamma}}_{L^{\frac{2n}{n+2\gamma}}(M)}+ C\sum_{k=1}^m\varepsilon_{k,\nu}^{\frac{n-2\gamma}{2}}$$
if $\nu$ is sufficiently large.
\end{lem}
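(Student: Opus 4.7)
Let $p = (n+2\gamma)/(n-2\gamma)$. The natural decomposition
\[
u_\nu - \bar{u}_{z_\nu} = \sum_{k=1}^m \alpha_{k,\nu}\, u_{(x_{k,\nu},\varepsilon_{k,\nu})} + w_\nu,
\]
combined with the triangle inequality in $L^p(M)$, reduces the bound to the two pieces separately. The rescaling $y = \varepsilon_{k,\nu}\xi$ applied to the standard fractional bubble profile yields $\|u_{(x_{k,\nu},\varepsilon_{k,\nu})}\|_{L^p(M)}^p \leq C\,\varepsilon_{k,\nu}^{(n-2\gamma)/2}$, and since $\alpha_{k,\nu} = 1+o(1)$ by Proposition \ref{prop6.7}, the bubble part contributes precisely the announced tail $C\sum_k \varepsilon_{k,\nu}^{(n-2\gamma)/2}$. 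Everything then reduces to controlling $\|w_\nu\|_{L^p(M)}^p$.

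For this, apply Lemma \ref{lem6.3}(i) with $f = w_\nu$. The supremum term is controlled by Proposition \ref{prop: estimates on w_nu}(i) as $o(1)\|w_\nu\|_{L^1(M)} \leq o(1)\|w_\nu\|_{L^p(M)}$ on the unit-volume manifold, and is absorbed into the left-hand side. With $q := n(n+2\gamma)/(n^2+4\gamma^2)$, the task becomes estimating
\[
\Big\|\Pi\Big(P_\gamma^{g_0} w_\nu - \tfrac{n+2\gamma}{n-2\gamma}\,s_\infty\, u_\infty^{4\gamma/(n-2\gamma)} w_\nu\Big)\Big\|_{L^q(M)}.
\]

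To compute this, expand $P_\gamma^{g_0} w_\nu = R_\gamma^{g_\nu}\,u_\nu^p - P_\gamma^{g_0}\bar{u}_{z_\nu} - \sum_k \alpha_{k,\nu}\,P_\gamma^{g_0}u_{(x_{k,\nu},\varepsilon_{k,\nu})}$ and apply $\Pi$. Lemma \ref{lem: def of perturbed uinfty} gives $\Pi(P_\gamma^{g_0}\bar{u}_{z_\nu}) = \Pi(s_\infty\bar{u}_{z_\nu}^p)$, while \eqref{eq: L 2n / n+2gamma estimate on bubble} provides $\|P_\gamma^{g_0} u_{(x_{k,\nu},\varepsilon_{k,\nu})} - s_\infty u_{(x_{k,\nu},\varepsilon_{k,\nu})}^p\|_{L^{2n/(n+2\gamma)}} = o(1)$. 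Since $(n-2\gamma)^2 \geq 0$ is equivalent to $q \leq 2n/(n+2\gamma)$, one has $L^{2n/(n+2\gamma)}\hookrightarrow L^q$ on the unit-volume manifold. Splitting $R_\gamma^{g_\nu} = s_\infty + (R_\gamma^{g_\nu}-s_\infty)$ isolates the first term $\|(R_\gamma^{g_\nu}-s_\infty)u_\nu^p\|_{L^{2n/(n+2\gamma)}}$ of the stated bound, and what remains is the $\Pi$-image of
\[
s_\infty\Big(u_\nu^p - \bar{u}_{z_\nu}^p - \sum_k \alpha_{k,\nu}\,u_{(x_{k,\nu},\varepsilon_{k,\nu})}^p - \tfrac{n+2\gamma}{n-2\gamma}\,u_\infty^{4\gamma/(n-2\gamma)} w_\nu\Big).
\]

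The main obstacle is controlling this nonlinear residue in $L^q(M)$ tightly enough that, after raising to the $p$-th power, it is dominated by $C\sum_k \varepsilon_{k,\nu}^{(n-2\gamma)/2}$. Writing $u_\nu = v_\nu + w_\nu$ with $v_\nu = \bar{u}_{z_\nu} + \sum_k \alpha_{k,\nu} u_{(x_{k,\nu},\varepsilon_{k,\nu})}$, a suitable Taylor expansion of $(v_\nu + w_\nu)^p$ separates the linear-in-$w_\nu$ piece $\tfrac{n+2\gamma}{n-2\gamma}(v_\nu^{4\gamma/(n-2\gamma)} - u_\infty^{4\gamma/(n-2\gamma)})w_\nu$, which is absorbed using Corollary \ref{cor6.10} (to replace $v_\nu^{4\gamma/(n-2\gamma)}$ by $u_\infty^{4\gamma/(n-2\gamma)} + \sum_k u_{(x_{k,\nu},\varepsilon_{k,\nu})}^{4\gamma/(n-2\gamma)}$ up to $o(1)$), Proposition \ref{prop: estimate on u^{4gamma/n-2gamma}w_nu^2} and $\|w_\nu\|_{H^\gamma(M)} = o(1)$; the superlinear Taylor remainder is likewise absorbed via these bounds. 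The purely combinatorial cross term $v_\nu^p - \bar{u}_{z_\nu}^p - \sum_k \alpha_{k,\nu}u_{(x_{k,\nu},\varepsilon_{k,\nu})}^p$, pointwise bounded by $C\sum_k(\bar{u}_{z_\nu}^{p-1} u_{(x_{k,\nu},\varepsilon_{k,\nu})} + \bar{u}_{z_\nu}\, u_{(x_{k,\nu},\varepsilon_{k,\nu})}^{p-1})$ using \eqref{eq: Linfty of u_z_nu}, is estimated via the interaction bounds of Proposition \ref{prop: estimation of interaction of bubbles}; a direct scale count (as in $\|u_{(x_{k,\nu},\varepsilon_{k,\nu})}^p\|_{L^q}^p = C\varepsilon_{k,\nu}^{(n-2\gamma)/2}$) then produces exactly the required $C\sum_k \varepsilon_{k,\nu}^{(n-2\gamma)/2}$ contribution. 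This quantitative matching of scales between the nonlinearity-induced interactions and the bubble tail is the technical heart of the argument.
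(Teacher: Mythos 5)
Your approach diverges from the paper's at a crucial point and, as written, contains at least two gaps that prevent the argument from closing.

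\textbf{Gap 1: an unabsorbable $o(1)$ from the bubble equation.} You apply Lemma~\ref{lem6.3}(i) to $f = w_\nu$ after subtracting off the bubbles by the triangle inequality. This forces you to expand $P_\gamma^{g_0}w_\nu$ and, in particular, to estimate $\Pi\big(\sum_k \alpha_{k,\nu}\,P_\gamma^{g_0}u_{(x_{k,\nu},\varepsilon_{k,\nu})}\big)$. You then invoke \eqref{eq: L 2n / n+2gamma estimate on bubble} to replace $P_\gamma^{g_0}u_{(x_k,\varepsilon_k)}$ by $s_\infty u_{(x_k,\varepsilon_k)}^{\frac{n+2\gamma}{n-2\gamma}}$. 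But that estimate is stated only as $o_\varepsilon(1)$, with no rate in $\varepsilon$; it is not dominated by $\varepsilon_{k,\nu}^{(n-2\gamma)^2/(2(n+2\gamma))}$, nor by $\|u_\nu^{\frac{n+2\gamma}{n-2\gamma}}(R_\gamma^{g_\nu}-s_\infty)\|_{L^{2n/(n+2\gamma)}}$, and it is not multiplied by any small factor that would allow absorption. Since the lemma's target right-hand side is itself $o(1)$ with a specific rate, an additive unquantified $o(1)$ destroys the estimate. The paper sidesteps this entirely by applying Lemma~\ref{lem6.3}(i) directly to $f = u_\nu - \bar{u}_{z_\nu}$, so that $P_\gamma^{g_0}$ applied to the bubbles never appears in isolation: the identity (\ref{134}) is exact, using only $P_\gamma^{g_0}u_\nu - s_\infty u_\nu^{p} = (R_\gamma^{g_\nu}-s_\infty)u_\nu^{p}$ and $\Pi(P_\gamma^{g_0}\bar{u}_{z_\nu} - s_\infty\bar{u}_{z_\nu}^{p}) = 0$, with no bubble-equation error.

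\textbf{Gap 2: the linear-in-$w_\nu$ coefficient is not small.} When you Taylor-expand $(v_\nu+w_\nu)^{p}$ around $v_\nu$, the linear piece is $\frac{n+2\gamma}{n-2\gamma}\big(v_\nu^{\frac{4\gamma}{n-2\gamma}} - u_\infty^{\frac{4\gamma}{n-2\gamma}}\big)w_\nu$. H\"older with exponents $\tfrac{n}{2\gamma}$ and $\tfrac{n+2\gamma}{n-2\gamma}$ gives this an $L^q$ bound $\lesssim \|v_\nu^{\frac{4\gamma}{n-2\gamma}} - u_\infty^{\frac{4\gamma}{n-2\gamma}}\|_{L^{n/2\gamma}}\,\|w_\nu\|_{L^{\frac{n+2\gamma}{n-2\gamma}}}$. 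But the coefficient here is $O(1)$, not $o(1)$: since $v_\nu$ contains the bubbles and $\int u_{(x_k,\varepsilon_k)}^{\frac{2n}{n-2\gamma}} \to Y_\gamma(\mathbb{S}^n)^{n/2\gamma}/s_\infty^{n/2\gamma} > 0$, one has $\|u_{(x_k,\varepsilon_k)}^{\frac{4\gamma}{n-2\gamma}}\|_{L^{n/2\gamma}} \not\to 0$. Therefore this term cannot be absorbed into $\|w_\nu\|_{L^{\frac{n+2\gamma}{n-2\gamma}}}$; Corollary~\ref{cor6.10} and Proposition~\ref{prop6.9} are quadratic-form bounds and do not deliver the needed $L^q$ estimate with small coefficient. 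The paper's choice of linearizing at $\bar{u}_{z_\nu}$ rather than at $v_\nu$ is what makes the linear piece harmless: the coefficient $\bar{u}_{z_\nu}^{\frac{4\gamma}{n-2\gamma}} - u_\infty^{\frac{4\gamma}{n-2\gamma}}$ is $o(1)$ in $L^\infty$ because $z_\nu\to 0$ and $z\mapsto\bar{u}_z$ is real analytic with $\bar{u}_0 = u_\infty$. The superlinear remainder is then handled by splitting $M$ into $\bigcup_k B_{N\varepsilon_{k,\nu}}(x_{k,\nu})$ and its complement and absorbing a factor $CN^{-(n-2\gamma)/2}$ into the left-hand side for $N$ large -- a self-improving mechanism your outline does not reproduce. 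You should therefore follow the paper and work with $f = u_\nu - \bar{u}_{z_\nu}$ throughout, rather than separating off the bubbles at the start.
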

\begin{proof}
Using the identities
$$P_\gamma^{g_0}u_\nu-s_\infty u_\nu^{\frac{n+2\gamma}{n-2\gamma}}
=(R_\gamma^{g_\nu}-s_\infty)u_\nu^{\frac{n+2\gamma}{n-2\gamma}}$$
and
$$\Pi\Big(P_\gamma^{g_0}\bar{u}_{z_\nu}-s_\infty \bar{u}_{z_\nu}^{\frac{n+2\gamma}{n-2\gamma}}\Big)=0,$$
we obtain
\begin{equation}\label{134}
\begin{split}
&\Pi\left(P_\gamma^{g_0}(u_\nu-\bar{u}_{z_\nu})-\frac{n+2\gamma}{n-2\gamma}s_\infty u_\infty^{\frac{4\gamma}{n-2\gamma}}(u_\nu-\bar{u}_{z_\nu})\right)\\
&=\Pi\Bigg((R_\gamma^{g_\nu}-s_\infty)u_\nu^{\frac{n+2\gamma}{n-2\gamma}}+\frac{n+2\gamma}{n-2\gamma}s_\infty (\bar{u}_\nu^{\frac{4\gamma}{n-2\gamma}}- u_\infty^{\frac{4\gamma}{n-2\gamma}})(u_\nu-\bar{u}_{z_\nu})\\
&\hspace{8mm}
-s_\infty\Big(\bar{u}_{z_\nu}^{\frac{n+2\gamma}{n-2\gamma}}+\frac{n+2\gamma}{n-2\gamma} \bar{u}_{z_\nu}^{\frac{4\gamma}{n-2\gamma}}(u_\nu-\bar{u}_{z_\nu})-u_\nu^{\frac{n+2\gamma}{n-2\gamma}}\Big)\Bigg).
\end{split}
\end{equation}
Applying Lemma \ref{lem6.3}, (i) with $f=u_\nu-\bar{u}_{z_\nu}$ gives
\begin{equation*}
\begin{split}
&\|u_\nu-\bar{u}_{z_\nu}\|_{L^{\frac{n+2\gamma}{n-2\gamma}}(M)}\\
&\hspace{4mm}\leq C\left\|\Pi\left(P_\gamma^{g_0}(u_\nu-\bar{u}_{z_\nu})-\frac{n+2\gamma}{n-2\gamma}s_\infty u_\infty^{\frac{4\gamma}{n-2\gamma}}(u_\nu-\bar{u}_{z_\nu})\right)\right\|_{L^{\frac{n(n+2\gamma)}{n^2+4\gamma^2}}(M)}\\
&\hspace{8mm}+C\sup_{a\in A}\left|\int_M u_\infty^{\frac{4\gamma}{n-2\gamma}}\psi_a (u_\nu-\bar{u}_{z_\nu})d\mu_{g_0}\right|.
\end{split}
\end{equation*}
Combining this with (\ref{134}), we obtain
\begin{equation}\label{136}
\begin{split}
&\|u_\nu-\bar{u}_{z_\nu}\|_{L^{\frac{n+2\gamma}{n-2\gamma}}(M)}\\
&\hspace{4mm}\leq C\Big\|(R_\gamma^{g_\nu}-s_\infty)u_\nu^{\frac{n+2\gamma}{n-2\gamma}}\Big\|_{L^{\frac{n(n+2\gamma)}{n^2+4\gamma^2}}(M)}\\
&\hspace{8mm}+C\Big\|(\bar{u}_\nu^{\frac{4\gamma}{n-2\gamma}}- u_\infty^{\frac{4\gamma}{n-2\gamma}})(u_\nu-\bar{u}_{z_\nu})\Big\|_{L^{\frac{n(n+2\gamma)}{n^2+4\gamma^2}}(M)}\\
&\hspace{8mm}+C\left\|\bar{u}_{z_\nu}^{\frac{n+2\gamma}{n-2\gamma}}+\frac{n+2\gamma}{n-2\gamma} \bar{u}_{z_\nu}^{\frac{4\gamma}{n-2\gamma}}(u_\nu-\bar{u}_{z_\nu})-u_\nu^{\frac{n+2\gamma}{n-2\gamma}}\right\|_{L^{\frac{n(n+2\gamma)}{n^2+4\gamma^2}}(M)}\\
&\hspace{8mm}+C\sup_{a\in A}\left|\int_M u_\infty^{\frac{4\gamma}{n-2\gamma}}\psi_a (u_\nu-\bar{u}_{z_\nu})d\mu_{g_0}\right|.
\end{split}
\end{equation}
Using the pointwise estimate \eqref{eq: pointwise estimate 2}
\begin{equation}\label{137}
\begin{split}
&\left|\bar{u}_{z_\nu}^{\frac{n+2\gamma}{n-2\gamma}}+\frac{n+2\gamma}{n-2\gamma} \bar{u}_{z_\nu}^{\frac{4\gamma}{n-2\gamma}}(u_\nu-\bar{u}_{z_\nu})-u_\nu^{\frac{n+2\gamma}{n-2\gamma}}\right|\\
&\leq C\bar{u}_{z_\nu}^{\max\{0,\frac{4\gamma}{n-2\gamma}-1\}}|u_\nu-\bar{u}_{z_\nu}|^{\min\{\frac{n+2\gamma}{n-2\gamma},2\}}
+|u_\nu-\bar{u}_{z_\nu}|^{\frac{n+2\gamma}{n-2\gamma}}
\end{split}
\end{equation}
and \eqref{eq: Linfty of u_z_nu}, we obtain
\begin{equation*}
\begin{split}
&\left\|\bar{u}_{z_\nu}^{\frac{n+2\gamma}{n-2\gamma}}+\frac{n+2\gamma}{n-2\gamma} \bar{u}_{z_\nu}^{\frac{4\gamma}{n-2\gamma}}(u_\nu-\bar{u}_{z_\nu})-u_\nu^{\frac{n+2\gamma}{n-2\gamma}}\right\|_{L^{\frac{n(n+2\gamma)}{n^2+4\gamma^2}}(M)}\\
&\leq C\Big\||u_\nu-\bar{u}_{z_\nu}|^{\min\{\frac{n+2\gamma}{n-2\gamma},2\}}
+|u_\nu-\bar{u}_{z_\nu}|^{\frac{n+2\gamma}{n-2\gamma}}\Big\|_{L^{\frac{n(n+2\gamma)}{n^2+4\gamma^2}}(M)}.
\end{split}
\end{equation*}
Note that
\begin{equation*}
\begin{split}
&\Big\||u_\nu-\bar{u}_{z_\nu}|^{\min\{\frac{n+2\gamma}{n-2\gamma},2\}}
+|u_\nu-\bar{u}_{z_\nu}|^{\frac{n+2\gamma}{n-2\gamma}}\Big\|_{L^{\frac{n(n+2\gamma)}{n^2+4\gamma^2}}(M)}\\
&\leq \Big\||u_\nu-\bar{u}_{z_\nu}|^{\min\{\frac{n+2\gamma}{n-2\gamma},2\}}
+|u_\nu-\bar{u}_{z_\nu}|^{\frac{n+2\gamma}{n-2\gamma}}\Big\|_{L^{\frac{n(n+2\gamma)}{n^2+4\gamma^2}}(\bigcup_{k=1}^mB_{N\varepsilon_{k,\nu}}(x_{k,\nu}))}\\
&\hspace{4mm}+\Big\||u_\nu-\bar{u}_{z_\nu}|^{\min\{\frac{n+2\gamma}{n-2\gamma},2\}}
+|u_\nu-\bar{u}_{z_\nu}|^{\frac{n+2\gamma}{n-2\gamma}}\Big\|_{L^{\frac{n(n+2\gamma)}{n^2+4\gamma^2}}(M\setminus\bigcup_{k=1}^mB_{N\varepsilon_{k,\nu}}(x_{k,\nu}))}\\
&\leq C\sum_{k=1}^m(N\varepsilon_{k,\nu})^{\frac{(n-2\gamma)^2}{2(n+2\gamma)}}\Big\||u_\nu-\bar{u}_{z_\nu}|^{\min\{\frac{n+2\gamma}{n-2\gamma},2\}}
+|u_\nu-\bar{u}_{z_\nu}|^{\frac{n+2\gamma}{n-2\gamma}}\Big\|_{L^{\frac{2n}{n+2\gamma}}(M)}\\
&\hspace{4mm}+C\Big\||u_\nu-\bar{u}_{z_\nu}|^{\min\{\frac{4\gamma}{n-2\gamma},1\}}
+|u_\nu-\bar{u}_{z_\nu}|^{\frac{4\gamma}{n-2\gamma}}\Big\|_{L^{\frac{n}{2\gamma}}(M\setminus\bigcup_{k=1}^mB_{N\varepsilon_{k,\nu}}(x_{k,\nu}))}\|u_\nu-\bar{u}_{z_\nu}\|_{L^{\frac{n+2\gamma}{n-2\gamma}}(M)}
\end{split}
\end{equation*}
by H\"{o}lder's inequality.
Since 
\begin{equation*}
\begin{split}
&\|u_\nu-\bar{u}_{z_\nu}\|_{L^{\frac{2n}{n-2\gamma}}(M\setminus\bigcup_{k=1}^mB_{N\varepsilon_{k,\nu}}(x_{k,\nu}))}\\
&=\Big\|\sum_{k=1}^m\alpha_{k,\nu}u_{(x_{k,\nu},\varepsilon_{k,\nu})}+w_\nu\Big\|_{L^{\frac{2n}{n-2\gamma}}(M\setminus\bigcup_{k=1}^mB_{N\varepsilon_{k,\nu}}(x_{k,\nu}))}\\
&\leq \sum_{k=1}^m\alpha_{k,\nu}\|u_{(x_{k,\nu},\varepsilon_{k,\nu})}\|_{L^{\frac{2n}{n-2\gamma}}(M\setminus\bigcup_{k=1}^mB_{N\varepsilon_{k,\nu}}(x_{k,\nu}))}+
\|w_\nu\|_{L^{\frac{2n}{n-2\gamma}}(M)}\\
&\leq CN^{-\frac{n-2\gamma}{2}}+o(1),
\end{split}
\end{equation*}
it follows that
\begin{equation*}
\begin{split}
&\left\|\bar{u}_{z_\nu}^{\frac{n+2\gamma}{n-2\gamma}}+\frac{n+2\gamma}{n-2\gamma} \bar{u}_{z_\nu}^{\frac{4\gamma}{n-2\gamma}}(u_\nu-\bar{u}_{z_\nu})-u_\nu^{\frac{n+2\gamma}{n-2\gamma}}\right\|_{L^{\frac{n(n+2\gamma)}{n^2+4\gamma^2}}(M)}\\
&\leq C\sum_{k=1}^m(N\varepsilon_{k,\nu})^{\frac{(n-2\gamma)^2}{2(n+2\gamma)}}
+\big( CN^{-\frac{n-2\gamma}{2}}+o(1)\big)\|u_\nu-\bar{u}_{z_\nu}\|_{L^{\frac{n+2\gamma}{n-2\gamma}}(M)}.
\end{split}
\end{equation*}
Moreover, we have 
\[
    \int_M \psi_a u_\infty^{\frac{4\gamma}{n-2\gamma}}u_{(x_{k,\nu},\varepsilon_{k,\nu})}\,d\mu_{g_0}\leq \|\psi_au_\infty^{\frac{4\gamma}{n-2\gamma}}\|_{L^\infty}\int_M u_{(x_{k,\nu},\varepsilon_{k,\nu})}\,d\mu_{g_0}\leq C\varepsilon_{k,\nu}^{\frac{n-2\gamma}{2}}.
\]
Thus, using \eqref{prop: estimates on w_nu}, we obtain
\begin{equation}\label{142}
\begin{split}
&\sup_{a\in A}\left|\int_M u_\infty^{\frac{4\gamma}{n-2\gamma}}\psi_a (u_\nu-\bar{u}_{z_\nu})d\mu_{g_0}\right|\\
&=\sup_{a\in A}\left|\int_M u_\infty^{\frac{4\gamma}{n-2\gamma}}\psi_a \Big(\sum_{k=1}^m\alpha_{k,\nu}u_{(x_{k,\nu},\varepsilon_{k,\nu})}+w_\nu\Big)d\mu_{g_0}\right|\\
&\leq C\sum_{k=1}^m\varepsilon_{k,\nu}^{\frac{n-2\gamma}{2}}+o(1)\|w_\nu\|_{L^1(M)}\\
&\leq C\sum_{k=1}^m\varepsilon_{k,\nu}^{\frac{n-2\gamma}{2}}+o(1)\Big\|u_\nu-\bar{u}_{z_\nu}-\sum_{k=1}^m\alpha_{k,\nu}u_{(x_{k,\nu},\varepsilon_{k,\nu})}\Big\|_{L^1(M)}\\
&\leq C\sum_{k=1}^m\varepsilon_{k,\nu}^{\frac{n-2\gamma}{2}}+o(1) \|u_\nu-\bar{u}_{z_\nu}\|_{L^1(M)}.
\end{split}
\end{equation}
Putting these facts together, we conclude that
\begin{equation*}
\begin{split}
\|u_\nu-\bar{u}_{z_\nu}\|_{L^{\frac{n+2\gamma}{n-2\gamma}}(M)}
&\leq  C\Big\|(R_\gamma^{g_\nu}-s_\infty)u_\nu^{\frac{n+2\gamma}{n-2\gamma}}\Big\|_{L^{\frac{2n}{n+2\gamma}}(M)}\\
&\hspace{4mm}+C\sum_{k=1}^m(N\varepsilon_{k,\nu})^{\frac{(n-2\gamma)^2}{2(n+2\gamma)}}
+C\sum_{k=1}^m\varepsilon_{k,\nu}^{\frac{n-2\gamma}{2}}\\
&\hspace{4mm}+\big( CN^{-\frac{n-2\gamma}{2}}+o(1)\big)\|u_\nu-\bar{u}_{z_\nu}\|_{L^{\frac{n+2\gamma}{n-2\gamma}}(M)}.
\end{split}
\end{equation*}
Hence, if we choose $N$ sufficiently large, we then obtain
\begin{equation*}
\|u_\nu-\bar{u}_{z_\nu}\|_{L^{\frac{n+2\gamma}{n-2\gamma}}(M)}
\leq  C\Big\|(R_\gamma^{g_\nu}-s_\infty)u_\nu^{\frac{n+2\gamma}{n-2\gamma}}\Big\|_{L^{\frac{2n}{n+2\gamma}}(M)}+C\sum_{k=1}^m\varepsilon_{k,\nu}^{\frac{(n-2\gamma)^2}{2(n+2\gamma)}}.
\end{equation*}
From this, the assertion follows.
\end{proof}

\begin{lem}\label{lem6.12}
 The difference $u_\nu-\bar{u}_{z_\nu}$ satisfies the estimate
$$\|u_\nu-\bar{u}_{z_\nu}\|_{L^1(M)}
\leq C\Big\|(R_\gamma^{g_\nu}-s_\infty)u_\nu^{\frac{n+2\gamma}{n-2\gamma}}\Big\|_{L^{\frac{2n}{n+2\gamma}}(M)}+ C\sum_{k=1}^m\varepsilon_{k,\nu}^{\frac{n-2\gamma}{2}}$$
if $\nu$ is sufficiently large.
\end{lem}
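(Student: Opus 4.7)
The plan is to follow the scheme of Lemma \ref{lem6.11} but apply the $L^{1}$ elliptic estimate of Lemma \ref{lem6.3}(ii) in place of Lemma \ref{lem6.3}(i). Set $f := u_\nu - \bar{u}_{z_\nu}$. From the identities $P_\gamma^{g_0}u_\nu - s_\infty u_\nu^{(n+2\gamma)/(n-2\gamma)} = (R_\gamma^{g_\nu}-s_\infty)u_\nu^{(n+2\gamma)/(n-2\gamma)}$ and $\Pi(P_\gamma^{g_0}\bar{u}_{z_\nu}-s_\infty\bar{u}_{z_\nu}^{(n+2\gamma)/(n-2\gamma)})=0$ from Lemma \ref{lem: def of perturbed uinfty}, one obtains the same identity as (\ref{134}), expressing $\Pi\bigl(P_\gamma^{g_0}f-\tfrac{n+2\gamma}{n-2\gamma}s_\infty u_\infty^{4\gamma/(n-2\gamma)}f\bigr)$ as the sum of a source term $(R_\gamma^{g_\nu}-s_\infty)u_\nu^{(n+2\gamma)/(n-2\gamma)}$, a linear correction involving $\bar{u}_{z_\nu}^{4\gamma/(n-2\gamma)}-u_\infty^{4\gamma/(n-2\gamma)}$, and the nonlinear Taylor remainder $R_\nu$. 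Lemma \ref{lem6.3}(ii) then reduces the task to bounding the $L^1(M)$ norms of these three pieces together with the sup term $\sup_{a\in A}\bigl|\int_M u_\infty^{4\gamma/(n-2\gamma)}\psi_a f\,d\mu_{g_0}\bigr|$. Observe that a naive descent from Lemma \ref{lem6.11} via the finite-volume embedding $L^{(n+2\gamma)/(n-2\gamma)}(M)\hookrightarrow L^1(M)$ would only yield the $\varepsilon_{k,\nu}$-power $(n-2\gamma)^2/(2(n+2\gamma))$, strictly weaker than the desired $(n-2\gamma)/2$, so a dedicated $L^1$ estimate is required.

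I control each of the four terms as follows. The source is bounded by the desired $L^{2n/(n+2\gamma)}$ norm using the finite volume of $M$. The linear correction is $o(1)\|f\|_{L^1(M)}$, hence absorbable, since $\bar{u}_{z_\nu}\to u_\infty$ uniformly by $|z_\nu|\to 0$ (Proposition \ref{prop6.7}) and real-analyticity of $z\mapsto\bar{u}_z$. For $R_\nu$, the pointwise estimate \eqref{eq: pointwise estimate 2} together with \eqref{eq: Linfty of u_z_nu} gives $|R_\nu|\le C\bigl(|f|^{\min\{(n+2\gamma)/(n-2\gamma),2\}}+|f|^{(n+2\gamma)/(n-2\gamma)}\bigr)$, and I split $M=\bigcup_k B_{N\varepsilon_{k,\nu}}(x_{k,\nu})\cup(M\setminus\bigcup_k B_{N\varepsilon_{k,\nu}}(x_{k,\nu}))$. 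On each bubble ball, H\"older against the bounded $L^{2n/(n-2\gamma)}$ norm of $f$ yields the factor $(N\varepsilon_{k,\nu})^{(n-2\gamma)/2}$ (the exponent $2\gamma$ that appears for $\gamma\ge n/6$ exceeds $(n-2\gamma)/2$, so causes no loss), summing to $C_N\sum_k \varepsilon_{k,\nu}^{(n-2\gamma)/2}$ with $C_N$ fixed once $N$ is chosen. On the complement, I decompose $|f|^{(n+2\gamma)/(n-2\gamma)}=|f|^a\cdot|f|^b$ with $a=8\gamma n/((n-2\gamma)(n+2\gamma))$ and $b=(n-2\gamma)/(n+2\gamma)$ and apply H\"older, so that the first factor is $\|f\|_{L^{2n/(n-2\gamma)}(M\setminus\bigcup)}^a\le(CN^{-(n-2\gamma)/2}+o(1))^a$ (as in the proof of Lemma \ref{lem6.11}) and the second is $\|f\|_{L^1(M)}^b$; a Young's inequality with a free parameter $\varepsilon$ then converts this into $\varepsilon\|f\|_{L^1(M)}+C(\varepsilon)(CN^{-n}+o(1))$, absorbable after choosing $N$ large. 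Finally, the sup term is treated exactly as in (\ref{142}), producing $C\sum_k\varepsilon_{k,\nu}^{(n-2\gamma)/2}+o(1)\|f\|_{L^1(M)}$ via the pointwise bound $\int u_{(x_{k,\nu},\varepsilon_{k,\nu})}\,d\mu_{g_0}\le C\varepsilon_{k,\nu}^{(n-2\gamma)/2}$ and Proposition \ref{prop: estimates on w_nu}(i).

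Collecting all contributions, choosing $\varepsilon$ small and $N$ large, and absorbing the $\varepsilon\|f\|_{L^1(M)}$ and $o(1)\|f\|_{L^1(M)}$ terms into the left-hand side yields the claim. The main obstacle is the complement-region treatment of $R_\nu$: the desired $\varepsilon_{k,\nu}^{(n-2\gamma)/2}$ power is unattainable by routing through Lemma \ref{lem6.11}, so the trick is to avoid Lemma \ref{lem6.11} in that region entirely and instead apply Young's inequality to absorb a fractional power of $\|f\|_{L^1(M)}$ directly; the remaining H\"older-exponent bookkeeping, though delicate, is routine once the correct split $(a,b)$ is identified.
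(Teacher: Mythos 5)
The approach taken diverges from the paper, and the alternative as sketched has a genuine gap.

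First, your diagnosis is off: the paper \emph{does} route through Lemma \ref{lem6.11}, and this does give the correct $\varepsilon_{k,\nu}$-power. You correctly observe that the naive embedding $\|f\|_{L^1}\le C\|f\|_{L^{(n+2\gamma)/(n-2\gamma)}}$ together with Lemma \ref{lem6.11} produces only $\varepsilon_{k,\nu}^{(n-2\gamma)^2/(2(n+2\gamma))}$. But the paper does not invoke Lemma \ref{lem6.11} through the first power of the $L^{(n+2\gamma)/(n-2\gamma)}$ norm. After applying Lemma \ref{lem6.3}(ii), the identity (\ref{134}), the pointwise estimate (\ref{137}) and a global H\"older interpolation on $M$ (no region split), the paper arrives at
\begin{equation*}
\|f\|_{L^1}\leq C\|\cdot\|_{L^{2n/(n+2\gamma)}}+C\|f\|_{L^1}^{\max\{0,1-\frac{n-2\gamma}{4}\}}\|f\|_{L^{(n+2\gamma)/(n-2\gamma)}}^{\frac{n+2\gamma}{n-2\gamma}\min\{1,\frac{n-2\gamma}{4}\}}+C\|f\|_{L^{(n+2\gamma)/(n-2\gamma)}}^{\frac{n+2\gamma}{n-2\gamma}}+C\sum_k\varepsilon_{k,\nu}^{\frac{n-2\gamma}{2}}+o(1)\|f\|_{L^1},
\end{equation*}
and since $\max\{0,1-\frac{n-2\gamma}{4}\}<1$, Young's inequality absorbs the sublinear $\|f\|_{L^1}$-factor into the left, leaving $\|f\|_{L^{(n+2\gamma)/(n-2\gamma)}}^{\frac{n+2\gamma}{n-2\gamma}}$ to the \emph{full} power $\frac{n+2\gamma}{n-2\gamma}$. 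Lemma \ref{lem6.11} bounds precisely this quantity by $C\|\cdot\|_{L^{2n/(n+2\gamma)}}^{\frac{n+2\gamma}{n-2\gamma}}+C\sum_k\varepsilon_{k,\nu}^{\frac{n-2\gamma}{2}}$, and since $\|\cdot\|_{L^{2n/(n+2\gamma)}}\to 0$ the superlinear power is harmless. No region-splitting of $M$ or dedicated $L^1$-estimate is needed.

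Second, your region-split alternative breaks down on the complement. The quantity $\|f\|_{L^{2n/(n-2\gamma)}(M\setminus\bigcup_k B_{N\varepsilon_{k,\nu}})}\le CN^{-(n-2\gamma)/2}+o(1)$ is a fixed positive constant for fixed $N$; it does not vanish as $\nu\to\infty$. In the paper's Lemma \ref{lem6.11} this is harmless because $N^{-(n-2\gamma)/2}$ always appears as a \emph{multiplier} of $\|f\|_{L^{(n+2\gamma)/(n-2\gamma)}}$ and is absorbed into the left-hand side after $N$ is fixed large. In your argument, once you apply Young's inequality to split $\|f\|_{L^{2n/(n-2\gamma)}(\text{cpl})}^a\|f\|_{L^1}^b$ into $\varepsilon\|f\|_{L^1}+C(\varepsilon)\|f\|_{L^{2n/(n-2\gamma)}(\text{cpl})}^{a/(1-b)}$, the second term decouples from any $\|f\|$-norm and becomes a standalone constant $C(\varepsilon)N^{-n}+o(1)$. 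After absorbing $\varepsilon\|f\|_{L^1}$, your final bound reads
\begin{equation*}
\|f\|_{L^1}\leq C\|\cdot\|_{L^{2n/(n+2\gamma)}}+C_N\sum_k\varepsilon_{k,\nu}^{\frac{n-2\gamma}{2}}+C(\varepsilon)N^{-n}+o(1),
\end{equation*}
whose right-hand side is bounded below by the positive constant $C(\varepsilon)N^{-n}$ uniformly in $\nu$, hence it does not reduce to the claimed estimate. Taking $N=N_\nu\to\infty$ does not repair this, as then the bubble-ball term $\sum_k(N_\nu\varepsilon_{k,\nu})^{(n-2\gamma)/2}$ is no longer $O\bigl(\sum_k\varepsilon_{k,\nu}^{(n-2\gamma)/2}\bigr)$. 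The fix is to keep the offending factor multiplied by the unknown rather than severing the product with Young, i.e.\ to follow the paper's route via $\|f\|_{L^{(n+2\gamma)/(n-2\gamma)}}^{\frac{n+2\gamma}{n-2\gamma}}$ and Lemma \ref{lem6.11}.
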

\begin{proof}
Using Lemma \ref{lem6.3}, (ii) with $f=u_\nu-\bar{u}_{z_\nu}$, we have
\begin{equation*}
\begin{split}
\|u_\nu-\bar{u}_{z_\nu}\|_{L^{1}(M)}&\leq C\left\|\Pi\left(P_\gamma^{g_0}(u_\nu-\bar{u}_{z_\nu})-\frac{n+2\gamma}{n-2\gamma}s_\infty u_\infty^{\frac{4\gamma}{n-2\gamma}} (u_\nu-\bar{u}_{z_\nu})\right)\right\|_{L^1(M)}\\
&\hspace{4mm}+C\sup_{a\in A}\left|\int_M u_\infty^{\frac{4\gamma}{n-2\gamma}}\psi_a (u_\nu-\bar{u}_{z_\nu})d\mu_{g_0}\right|.
\end{split}
\end{equation*}
Combining this with (\ref{134}), we have
\begin{equation}\label{147}
\begin{split}
&\|u_\nu-\bar{u}_{z_\nu}\|_{L^1(M)}\\
&\hspace{4mm}\leq C\Big\|(R_\gamma^{g_\nu}-s_\infty)u_\nu^{\frac{n+2\gamma}{n-2\gamma}}\Big\|_{L^1(M)}\\
&\hspace{8mm}+C\Big\|(u_\nu^{\frac{4\gamma}{n-2\gamma}}- u_\infty^{\frac{4\gamma}{n-2\gamma}})(u_\nu-\bar{u}_{z_\nu})\Big\|_{L^1(M)}\\
&\hspace{8mm}+C\left\|\bar{u}_{z_\nu}^{\frac{n+2\gamma}{n-2\gamma}}+\frac{n+2\gamma}{n-2\gamma} \bar{u}_{z_\nu}^{\frac{4\gamma}{n-2\gamma}}(u_\nu-\bar{u}_{z_\nu})-u_\nu^{\frac{n+2\gamma}{n-2\gamma}}\right\|_{L^1(M)}\\
&\hspace{8mm}+C\sup_{a\in A}\left|\int_M u_\infty^{\frac{4\gamma}{n-2\gamma}}\psi_a (u_\nu-\bar{u}_{z_\nu})d\mu_{g_0}\right|.
\end{split}
\end{equation}
Using the pointwise estimate (\ref{137}) and H\"{o}lder's inequality, we get
\begin{equation*}
\begin{split}
&\left\|\bar{u}_{z_\nu}^{\frac{n+2\gamma}{n-2\gamma}}+\frac{n+2\gamma}{n-2\gamma} \bar{u}_{z_\nu}^{\frac{4\gamma}{n-2\gamma}}(u_\nu-\bar{u}_{z_\nu})-u_\nu^{\frac{n+2\gamma}{n-2\gamma}}\right\|_{L^1(M)}\\
&\leq C\Big\||u_\nu-\bar{u}_{z_\nu}|^{\min\{\frac{n+2\gamma}{n-2\gamma},2\}}
+|u_\nu-\bar{u}_{z_\nu}|^{\frac{n+2\gamma}{n-2\gamma}}\Big\|_{L^1(M)}\\
&\leq \|u_\nu-\bar{u}_{z_\nu}\|_{L^1(M)}^{\max\{0,1-\frac{n-2\gamma}{4}\}}
\big\||u_\nu-\bar{u}_{z_\nu}|^{\frac{n+2\gamma}{n-2\gamma}}\big\|_{L^1(M)}^{\min\{1,\frac{n-2\gamma}{4}\}}\\
&\hspace{4mm}
+C\big\||u_\nu-\bar{u}_{z_\nu}|^{\frac{n+2\gamma}{n-2\gamma}}\big\|_{L^1(M)}\\
&\leq \|u_\nu-\bar{u}_{z_\nu}\|_{L^1(M)}^{\max\{0,1-\frac{n-2\gamma}{4}\}}
\|u_\nu-\bar{u}_{z_\nu} \|_{L^{\frac{n+2\gamma}{n-2\gamma}}(M)}^{\frac{n+2\gamma}{n-2\gamma}\min\{1,\frac{n-2\gamma}{4}\}}\\
&\hspace{4mm}
+C\|u_\nu-\bar{u}_{z_\nu}\|_{L^{\frac{n+2\gamma}{n-2\gamma}}(M)}^{\frac{n+2\gamma}{n-2\gamma}}.
\end{split}
\end{equation*}
Combining this with (\ref{142}) and (\ref{147}), we conclude that
\begin{equation*}
\begin{split}
&\|u_\nu-\bar{u}_{z_\nu}\|_{L^1(M)}\\
&\hspace{4mm}\leq   C\Big\|(R_\gamma^{g_\nu}-s_\infty)u_\nu^{\frac{n+2\gamma}{n-2\gamma}}\Big\|_{L^{\frac{2n}{n+2\gamma}}(M)}\\
&\hspace{8mm}+
C\|u_\nu-\bar{u}_{z_\nu}\|_{L^1(M)}^{\max\{0,1-\frac{n-2\gamma}{4}\}}
\|u_\nu-\bar{u}_{z_\nu} \|_{L^{\frac{n+2\gamma}{n-2\gamma}}(M)}^{\frac{n+2\gamma}{n-2\gamma}\min\{1,\frac{n-2\gamma}{4}\}}\\
&\hspace{8mm}
+C\|u_\nu-\bar{u}_{z_\nu}\|_{L^{\frac{n+2\gamma}{n-2\gamma}}(M)}^{\frac{n+2\gamma}{n-2\gamma}}
+C\sum_{k=1}^m\varepsilon_{k,\nu}^{\frac{n-2\gamma}{2}}+o(1) \|u_\nu-\bar{u}_{z_\nu}\|_{L^1(M)}.
\end{split}
\end{equation*}
Since $\max\{0,1-\frac{n-2\gamma}{4}\}<1$, this implies
\begin{equation*}
\begin{split}
\|u_\nu-\bar{u}_{z_\nu}\|_{L^1(M)}
& \leq   C\Big\|(R_\gamma^{g_\nu}-s_\infty)u_\nu^{\frac{n+2\gamma}{n-2\gamma}}\Big\|_{L^{\frac{2n}{n+2\gamma}}(M)}\\
&\hspace{4mm}+
C\|u_\nu-\bar{u}_{z_\nu}\|_{L^{\frac{n+2\gamma}{n-2\gamma}}(M)}^{\frac{n+2\gamma}{n-2\gamma}}
+C\sum_{k=1}^m\varepsilon_{k,\nu}^{\frac{n-2\gamma}{2}}.
\end{split}
\end{equation*}
The assertion follows now from Lemma \ref{lem6.11}.
\end{proof}

\begin{lem}\label{lem6.13}
We have
\begin{equation*}
\begin{split}
&\sup_{a\in A}\left|\int_M\psi_a\Big(P_\gamma^{g_0}\bar{u}_{z_\nu}-s_\infty \bar{u}_{z_\nu}^{\frac{n+2\gamma}{n-2\gamma}}\Big)d\mu_{g_0}\right|\\
&\leq C\left(\int_M|R_\gamma^{g_\nu}-s_\infty|^{\frac{2n}{n+2\gamma}}u_\nu^{\frac{2n}{n-2\gamma}}d\mu_{g_0}\right)^{\frac{n+2\gamma}{2n}}
+C\sum_{k=1}^m\varepsilon_{k,\nu}^{\frac{n-2\gamma}{2}}
\end{split}
\end{equation*}
if $\nu$ is sufficiently large.
\end{lem}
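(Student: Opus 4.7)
The plan is to substitute the Yamabe equation into the target integral and estimate the three resulting pieces. Setting $p:=\frac{n+2\gamma}{n-2\gamma}$ and $h_\nu:=u_\nu-\bar u_{z_\nu}$, the identity $P_\gamma^{g_0}u_\nu - s_\infty u_\nu^p = (R_\gamma^{g_\nu}-s_\infty)u_\nu^p$ rearranges to
\[
P_\gamma^{g_0}\bar u_{z_\nu} - s_\infty\bar u_{z_\nu}^p = (R_\gamma^{g_\nu}-s_\infty)u_\nu^p - P_\gamma^{g_0}h_\nu + s_\infty\bigl(u_\nu^p - \bar u_{z_\nu}^p\bigr).
\]
Testing against $\psi_a$ and using $\int\psi_a P_\gamma^{g_0}h_\nu\,d\mu_{g_0}=\lambda_a\int u_\infty^{4\gamma/(n-2\gamma)}\psi_a h_\nu\,d\mu_{g_0}$ by self-adjointness and the eigenfunction equation, the target quantity splits into three contributions, each of which I plan to bound by the right-hand side.

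For the first contribution, I apply H\"older's inequality with exponents $\frac{2n}{n+2\gamma}$ and $\frac{2n}{n-2\gamma}$; the $L^{2n/(n-2\gamma)}$-norm of $\psi_a$ is uniformly bounded, and the remaining factor reproduces precisely the first term on the right-hand side. For the second contribution, $\lambda_a\int u_\infty^{4\gamma/(n-2\gamma)}\psi_a h_\nu\,d\mu_{g_0}$, I use that $u_\infty^{4\gamma/(n-2\gamma)}\psi_a$ lies in $L^\infty$, so it is dominated by $C\|h_\nu\|_{L^1(M)}$, which Lemma \ref{lem6.12} bounds by the right-hand side.

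For the third contribution, I use the elementary pointwise inequality $|u_\nu^p-\bar u_{z_\nu}^p|\leq C\bar u_{z_\nu}^{p-1}|h_\nu|+C|h_\nu|^p$, valid in all parameter ranges via subadditivity (or convexity) of $t\mapsto t^{p-1}$. The linear piece is bounded by $C\|h_\nu\|_{L^1(M)}$ using the uniform $L^\infty$-bound \eqref{eq: Linfty of u_z_nu} on $\bar u_{z_\nu}$, hence again by Lemma \ref{lem6.12}. The nonlinear piece satisfies $\int|\psi_a||h_\nu|^p\,d\mu_{g_0}\leq C\|h_\nu\|_{L^p(M)}^p$, and Lemma \ref{lem6.11} gives
\[
\|h_\nu\|_{L^p(M)}^p \leq C\bigl\|(R_\gamma^{g_\nu}-s_\infty)u_\nu^p\bigr\|_{L^{2n/(n+2\gamma)}(M)}^p + C\sum_{k=1}^m\varepsilon_{k,\nu}^{(n-2\gamma)/2}.
\]
Since $\|(R_\gamma^{g_\nu}-s_\infty)u_\nu^p\|_{L^{2n/(n+2\gamma)}(M)}=o(1)$ by \eqref{2.3}, I factor out $p-1$ powers to write the first summand as $o(1)\,\|(R_\gamma^{g_\nu}-s_\infty)u_\nu^p\|_{L^{2n/(n+2\gamma)}(M)}$, producing the desired bound.

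The main technical subtlety — and the reason the proof must use Lemma \ref{lem6.11} at the level of $\|h_\nu\|_{L^p}^p$ rather than $\|h_\nu\|_{L^p}$ — is that taking $p$-th roots and applying the subadditivity $(a+b)^{1/p}\leq a^{1/p}+b^{1/p}$ would convert the error $\sum\varepsilon_{k,\nu}^{(n-2\gamma)/2}$ into the strictly weaker $\sum\varepsilon_{k,\nu}^{(n-2\gamma)^2/[2(n+2\gamma)]}$, which is asymptotically larger than what the statement allows. Keeping the estimate at the $p$-th power and harnessing the smallness of the $L^{2n/(n+2\gamma)}$-norm of $(R_\gamma^{g_\nu}-s_\infty)u_\nu^p$ avoids this loss and matches the exponent in the claim.
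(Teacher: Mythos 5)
Your proof is correct and follows essentially the same route as the paper's: you decompose $P_\gamma^{g_0}\bar u_{z_\nu}-s_\infty\bar u_{z_\nu}^p$ using the identity $P_\gamma^{g_0}u_\nu - s_\infty u_\nu^p = (R_\gamma^{g_\nu}-s_\infty)u_\nu^p$ and the eigenfunction relation $P_\gamma^{g_0}\psi_a = \lambda_a u_\infty^{4\gamma/(n-2\gamma)}\psi_a$, estimate the resulting three terms by H\"older, the $L^\infty$ bound on $u_\infty^{4\gamma/(n-2\gamma)}\psi_a$, and the pointwise inequality \eqref{eq: pointwise estimate 1}, and then close with Lemmas \ref{lem6.11} and \ref{lem6.12}. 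Your closing remark about using Lemma \ref{lem6.11} at the $p$-th-power level and absorbing the extra powers via the $o(1)$ smallness of $\|(R_\gamma^{g_\nu}-s_\infty)u_\nu^p\|_{L^{2n/(n+2\gamma)}}$ (from \eqref{2.3}) is exactly the implicit step the paper leaves unstated when it says "the assertion follows from combining with Lemma \ref{lem6.11} and Lemma \ref{lem6.12}," so you have made the argument more transparent than the original.
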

\begin{proof}
Using $P_{g_0}^\gamma \psi_a=\lambda_a u_\infty^{\frac{4\gamma}{n-2\gamma}}\psi_a$, there holds 
\begin{equation*}
\begin{split}
&\int_M\psi_a\Big(P_\gamma^{g_0}\bar{u}_{z_\nu}-s_\infty \bar{u}_{z_\nu}^{\frac{n+2\gamma}{n-2\gamma}}\Big)d\mu_{g_0}\\
&=\int_M\psi_a\Big(P_\gamma^{g_0}u_\nu-s_\infty u_\nu^{\frac{n+2\gamma}{n-2\gamma}}\Big)d\mu_{g_0}
\\
&\hspace{4mm}-\lambda_a\int_Mu_\infty^{\frac{4\gamma}{n-2\gamma}}\psi_a(u_\nu-\bar{u}_{z_\nu})d\mu_{g_0}
+s_\infty\int_M\psi_a(u_\nu^{\frac{n+2\gamma}{n-2\gamma}}-\bar{u}_{z_\nu}^{\frac{n+2\gamma}{n-2\gamma}})d\mu_{g_0}.
\end{split}
\end{equation*}
Using the identity
$$P_\gamma^{g_0}u_\nu-s_\infty u_\nu^{\frac{n+2\gamma}{n-2\gamma}}=(R_\gamma^{g_\nu}-s_\infty) u_\nu^{\frac{n+2\gamma}{n-2\gamma}},$$
we obtain
\begin{equation*}
\begin{split}
&\int_M\psi_a\Big(P_\gamma^{g_0}\bar{u}_{z_\nu}-s_\infty \bar{u}_{z_\nu}^{\frac{n+2\gamma}{n-2\gamma}}\Big)d\mu_{g_0}\\
&=\int_M\psi_a u_\nu^{\frac{n+2\gamma}{n-2\gamma}}(R_\gamma^{g_\nu}-s_\infty) d\mu_{g_0}
\\
&\hspace{4mm}-\lambda_a\int_Mu_\infty^{\frac{4\gamma}{n-2\gamma}}\psi_a(u_\nu-\bar{u}_{z_\nu})d\mu_{g_0}
+s_\infty\int_M\psi_a(u_\nu^{\frac{n+2\gamma}{n-2\gamma}}-\bar{u}_{z_\nu}^{\frac{n+2\gamma}{n-2\gamma}})d\mu_{g_0}.
\end{split}
\end{equation*}
Using the pointwise estimate \eqref{eq: pointwise estimate 1}
\begin{equation*}
|u_\nu^{\frac{n+2\gamma}{n-2\gamma}}-\bar{u}_{z_\nu}^{\frac{n+2\gamma}{n-2\gamma}}|
\leq C\bar{u}_{z_\nu}^{\frac{4\gamma}{n-2\gamma}}|u_\nu-\bar{u}_{z_\nu}|+C|u_\nu-\bar{u}_{z_\nu}|^{\frac{n+2\gamma}{n-2\gamma}},
\end{equation*}
we conclude that
\begin{equation*}
\begin{split}
&\sup_{a\in A}\left|\int_M\psi_a\Big(P_\gamma^{g_0}\bar{u}_{z_\nu}-s_\infty \bar{u}_{z_\nu}^{\frac{n+2\gamma}{n-2\gamma}}\Big)d\mu_{g_0}\right|\\
&\hspace{8mm}\leq C\|u_\nu^{\frac{n+2\gamma}{n-2\gamma}}(R_\gamma^{g_\nu}-s_\infty)\|_{L^{\frac{2n}{n+2\gamma}}(M)}
\\
&\hspace{12mm}+C\|u_\nu-\bar{u}_{z_\nu}\|_{L^1(M)}+C\|u_\nu-\bar{u}_{z_\nu}\|_{L^{\frac{n+2\gamma}{n-2\gamma}}(M)}^{\frac{n+2\gamma}{n-2\gamma}}.
\end{split}
\end{equation*}
Now the assertion follows from combining this with Lemma \ref{lem6.11} and Lemma \ref{lem6.12}.
\end{proof}

\begin{prop}\label{prop6.14}
There holds
\begin{equation*}
\begin{split}
&E(\bar{u}_{z_\nu})-E(u_\infty)\\
&\hspace{4mm}\leq  C\left(\int_M|R_\gamma^{g_\nu}-s_\infty|^{\frac{2n}{n+2\gamma}}u_\nu^{\frac{2n}{n-2\gamma}}d\mu_{g_0}\right)^{\frac{n+2\gamma}{2n}(1+\delta)}
+C\sum_{k=1}^m\varepsilon_{k,\nu}^{\frac{n-2\gamma}{2}(1+\delta)}
\end{split}
\end{equation*}
if $\nu$ is sufficiently large.
\end{prop}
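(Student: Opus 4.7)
The plan is that, by this stage, the hard work has already been invested in Lemma \ref{lem6.5} (the fractional \L{}ojasiewicz-type inequality from \cite{ChanSireSun}) and Lemma \ref{lem6.13} (the expansion of the projected derivative in terms of the curvature defect and bubble scales). Proposition \ref{prop6.14} is then just their composition, and the proof should be very short. I would proceed in three steps.

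First, I would invoke Proposition \ref{prop6.7}, which guarantees $|z_\nu|=o(1)$. In particular, for $\nu$ sufficiently large, $|z_\nu|$ lies in the smallness regime required by Lemma \ref{lem6.5}. Applying that lemma with $z=z_\nu$ yields, for some $\delta\in(0,1)$ depending only on the sequence,
\[
E(\bar{u}_{z_\nu})-E(u_\infty)\leq C\sup_{a\in A}\left|\int_M\psi_a\Big(P_\gamma^{g_0}\bar{u}_{z_\nu}-s_\infty \bar{u}_{z_\nu}^{\frac{n+2\gamma}{n-2\gamma}}\Big)d\mu_{g_0}\right|^{1+\delta}.
\]

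Second, I would substitute the bound from Lemma \ref{lem6.13} into this right-hand side:
\[
\sup_{a\in A}\left|\int_M\psi_a\Big(P_\gamma^{g_0}\bar{u}_{z_\nu}-s_\infty \bar{u}_{z_\nu}^{\frac{n+2\gamma}{n-2\gamma}}\Big)d\mu_{g_0}\right|
\leq C\left(\int_M|R_\gamma^{g_\nu}-s_\infty|^{\frac{2n}{n+2\gamma}}u_\nu^{\frac{2n}{n-2\gamma}}d\mu_{g_0}\right)^{\frac{n+2\gamma}{2n}}+C\sum_{k=1}^m\varepsilon_{k,\nu}^{\frac{n-2\gamma}{2}}.
\]

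Third, I would raise both sides to the power $1+\delta$ and use the elementary inequality $(a+b)^{1+\delta}\leq 2^{\delta}(a^{1+\delta}+b^{1+\delta})$ for $a,b\geq 0$, absorbing the constant into $C$, which produces exactly the claimed estimate (after distributing the power $1+\delta$ across the finite sum $\sum_k \varepsilon_{k,\nu}^{(n-2\gamma)/2}$ via the same sub-additivity on powers). Since there is nothing deeper to verify, I do not anticipate a real obstacle here; the nontrivial content is already packed into Lemmas \ref{lem6.5} and \ref{lem6.13}, together with the $o(1)$ control on $z_\nu$ from Proposition \ref{prop6.7}.
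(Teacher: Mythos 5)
Your proposal is correct and follows the paper's own argument exactly: the paper's proof of Proposition~\ref{prop6.14} reads ``This follows immediately from Lemma~\ref{lem6.5} and Lemma~\ref{lem6.13}.'' You have simply spelled out the implicit steps (invoking Proposition~\ref{prop6.7} for the smallness of $z_\nu$ and the sub-additivity of $t\mapsto t^{1+\delta}$), which is precisely what ``follows immediately'' compresses.
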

\begin{proof}
This follows immediately from Lemma \ref{lem6.5} and Lemma \ref{lem6.13}.
\end{proof}

\begin{prop}\label{prop6.15}
If $\nu$ is sufficiently large, then
\begin{equation*}
\begin{split}
E(v_\nu)&\leq \Big(E(\bar{u}_{z_\nu})^{\frac{n}{2\gamma}}+\sum_{k=1}^mE(u_{(x_{k,\nu},\varepsilon_{k,\nu})})^{\frac{n}{2\gamma}}\Big)^{\frac{2\gamma}{n}}
-c\sum_{k=1}^m\varepsilon_{k,\nu}^{\frac{n-2\gamma}{2}}.
\end{split}
\end{equation*}
\end{prop}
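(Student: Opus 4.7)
The strategy parallels Proposition \ref{prop5.6}, with the essential new ingredient that $\bar{u}_{z_\nu}$ is uniformly bounded below by a positive constant (since $u_\infty > 0$ and $|z_\nu| = o(1)$ by Proposition \ref{prop6.7}); this produces an additional margin of order $\sum_k \varepsilon_{k,\nu}^{(n-2\gamma)/2}$ coming from the interaction between each bubble and the non-vanishing background.

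First I would set up Hölder's inequality as in Proposition \ref{prop5.6}. Using the identity $E(u)^{n/(2\gamma)} = \int F(u)^{n/(2\gamma)} u^{2n/(n-2\gamma)}\,d\mu_{g_0}$ and defining
$$h := \Bigl(F(\bar{u}_{z_\nu})^{n/(2\gamma)} \bar{u}_{z_\nu}^{2n/(n-2\gamma)} + \sum_{k=1}^m F(u_{(x_{k,\nu},\varepsilon_{k,\nu})})^{n/(2\gamma)} u_{(x_{k,\nu},\varepsilon_{k,\nu})}^{2n/(n-2\gamma)}\Bigr)^{2\gamma/n},$$
Hölder with exponents $n/(2\gamma)$ and $n/(n-2\gamma)$ yields
$$\Bigl(E(\bar{u}_{z_\nu})^{n/(2\gamma)} + \sum_k E(u_{(x_{k,\nu},\varepsilon_{k,\nu})})^{n/(2\gamma)}\Bigr)^{2\gamma/n}\Bigl(\int v_\nu^{2n/(n-2\gamma)}d\mu_{g_0}\Bigr)^{(n-2\gamma)/n}\geq \int h v_\nu^2\,d\mu_{g_0}.$$
Expanding $v_\nu^2$ and $\int v_\nu P_\gamma^{g_0} v_\nu\,d\mu_{g_0}$, the pointwise bounds $h\geq F(\bar{u}_{z_\nu})\bar{u}_{z_\nu}^{4\gamma/(n-2\gamma)}$ and $h\geq F(u_{(x_{k,\nu},\varepsilon_{k,\nu})})u_{(x_{k,\nu},\varepsilon_{k,\nu})}^{4\gamma/(n-2\gamma)}$ match the self-interaction terms, and the bubble-bubble cross interactions yield a nonnegative contribution by the argument of Proposition \ref{prop5.6} (combining the pointwise estimate \eqref{4.13} with \eqref{eq: Main interaction of bubble term estimate}).

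The decisive step is the bubble-background cross term. Abbreviating $u_k = u_{(x_{k,\nu},\varepsilon_{k,\nu})}$, I would split
$$\int h\bar{u}_{z_\nu} u_k\,d\mu_{g_0} - \int \bar{u}_{z_\nu} P_\gamma^{g_0}u_k\,d\mu_{g_0} = \int\bigl(h - F(u_k)u_k^{4\gamma/(n-2\gamma)}\bigr)\bar{u}_{z_\nu} u_k\,d\mu_{g_0} - \int \bar{u}_{z_\nu}\bigl(P_\gamma^{g_0}u_k - F(u_k)u_k^{(n+2\gamma)/(n-2\gamma)}\bigr)\,d\mu_{g_0}.$$
In the bulk region $M\setminus\bigcup_l B_{N\varepsilon_{l,\nu}}(x_{l,\nu})$, the background contribution $F(\bar{u}_{z_\nu})^{n/(2\gamma)}\bar{u}_{z_\nu}^{2n/(n-2\gamma)}$ dominates $h^{n/(2\gamma)}$, so both $h$ and $\bar{u}_{z_\nu}$ are uniformly positive there. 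Combined with the standard bubble asymptotics $\int_{\mathrm{bulk}} u_k\,d\mu_{g_0}\gtrsim \varepsilon_{k,\nu}^{(n-2\gamma)/2}$, the first term on the right is bounded below by $c\,\varepsilon_{k,\nu}^{(n-2\gamma)/2}$.

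The main obstacle is showing that the second term is $o(\varepsilon_{k,\nu}^{(n-2\gamma)/2})$: Proposition \ref{prop: estimation of interaction of bubbles} provides such a bound only when the first factor is itself a bubble, so one must adapt the extension-based analysis of Mayer--Ndiaye \cite{mayer2024fractional} to the case of a smooth non-bubble factor $\bar{u}_{z_\nu}$, in the same spirit as the derivation of \eqref{eq: L 2n / n+2gamma estimate on bubble}. Combining these estimates and dividing through by the uniformly positive factor $(\int v_\nu^{2n/(n-2\gamma)}d\mu_{g_0})^{(n-2\gamma)/n}$ yields the claimed inequality.
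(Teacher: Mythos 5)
There is a genuine gap at the bubble--background cross term, and it arises from the \emph{wrong choice of splitting}.

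You decompose
\[
\int h\bar{u}_{z_\nu} u_k\,d\mu_{g_0}-\int \bar{u}_{z_\nu}P_\gamma^{g_0}u_k\,d\mu_{g_0}
=\int\bigl(h-F(u_k)u_k^{\frac{4\gamma}{n-2\gamma}}\bigr)\bar{u}_{z_\nu}u_k\,d\mu_{g_0}
-\int \bar{u}_{z_\nu}\bigl(P_\gamma^{g_0}u_k-F(u_k)u_k^{\frac{n+2\gamma}{n-2\gamma}}\bigr)d\mu_{g_0},
\]
and hope to show the second integral is $o(\varepsilon_{k,\nu}^{\frac{n-2\gamma}{2}})$. This is generically false, not merely hard. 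Self-adjointness gives $\int \bar{u}_{z_\nu}P_\gamma^{g_0}u_k=\int (P_\gamma^{g_0}\bar{u}_{z_\nu})u_k$, and $P_\gamma^{g_0}\bar{u}_{z_\nu}\to s_\infty u_\infty^{\frac{n+2\gamma}{n-2\gamma}}$ uniformly while $F(u_k)\to s_\infty$, so the second integral is asymptotic to
\[
s_\infty\Bigl[\int u_\infty^{\frac{n+2\gamma}{n-2\gamma}}u_k\,d\mu_{g_0}-\int u_k^{\frac{n+2\gamma}{n-2\gamma}}u_\infty\,d\mu_{g_0}\Bigr],
\]
and both terms in the bracket are of order $\varepsilon_{k,\nu}^{\frac{n-2\gamma}{2}}$ with \emph{different} constants (one is roughly $u_\infty(x_k)^{\frac{n+2\gamma}{n-2\gamma}}\int u_k$, the other roughly $u_\infty(x_k)\int u_k^{\frac{n+2\gamma}{n-2\gamma}}$). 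So no amount of refining the extension estimates of Mayer--Ndiaye will make this $o(\varepsilon_{k,\nu}^{\frac{n-2\gamma}{2}})$; the quantity you want to kill simply is not small.

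The paper's proof sidesteps this with the \emph{other} pointwise floor for $h\bar{u}_{z_\nu}u_k$: it writes
\[
h\bar{u}_{z_\nu}u_k\geq F(\bar{u}_{z_\nu})\bar{u}_{z_\nu}^{\frac{n+2\gamma}{n-2\gamma}}u_k+c\,\varepsilon_{k,\nu}^{-\frac{n+2\gamma}{2}}\mathbf{1}_{\{d(x_{k,\nu},\cdot)\leq\varepsilon_{k,\nu}\}},
\]
so the positive excess is harvested from the bubble \emph{core} (where the bubble term in $h$ dominates), not from the bulk. The residual to control is then
\[
\int\bigl(P_\gamma^{g_0}\bar{u}_{z_\nu}-F(\bar{u}_{z_\nu})\bar{u}_{z_\nu}^{\frac{n+2\gamma}{n-2\gamma}}\bigr)u_k\,d\mu_{g_0},
\]
after applying self-adjointness to move $P_\gamma^{g_0}$ onto the smooth factor. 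Because $\bar{u}_{z_\nu}$ was constructed with $\Pi(P_\gamma^{g_0}\bar{u}_{z_\nu}-s_\infty\bar{u}_{z_\nu}^{\frac{n+2\gamma}{n-2\gamma}})=0$ and $z_\nu\to 0$, the integrand $P_\gamma^{g_0}\bar{u}_{z_\nu}-F(\bar{u}_{z_\nu})\bar{u}_{z_\nu}^{\frac{n+2\gamma}{n-2\gamma}}$ is a bounded function tending to zero in $L^\infty$; paired with $\int u_k\lesssim\varepsilon_{k,\nu}^{\frac{n-2\gamma}{2}}$ it gives the needed $o(\varepsilon_{k,\nu}^{\frac{n-2\gamma}{2}})$. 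In short: both decompositions compute the same total, but only the one that puts $P_\gamma^{g_0}$ on $\bar{u}_{z_\nu}$ produces an estimable residual. Your bulk-excess computation for the first integral is fine; the error is in believing the other piece can be made small.
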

\begin{proof}
From the identity
\begin{equation*}
\begin{split}
\int_Mv_\nu P_\gamma^{g_0}v_\nu d\mu_{g_0}
&=\int_M\bar{u}_{z_\nu} P_\gamma^{g_0}\bar{u}_{z_\nu} d\mu_{g_0}+\int_M\sum_{k=1}^m\alpha_{k,\nu}^2u_{(x_{k,\nu},\varepsilon_{k,\nu})} P_\gamma^{g_0}(u_{(x_{k,\nu},\varepsilon_{k,\nu})})  d\mu_{g_0}\\
&\hspace{4mm}+2\int_M\sum_{k=1}^m\alpha_{k,\nu} \bar{u}_{z_\nu}P_\gamma^{g_0}(u_{(x_{k,\nu},\varepsilon_{k,\nu})})  d\mu_{g_0}\\
&\hspace{4mm}+2\int_M\sum_{i<j}\alpha_{i,\nu}\alpha_{j,\nu}u_{(x_{i,\nu},\varepsilon_{i,\nu})} P_\gamma^{g_0}(u_{(x_{j,\nu},\varepsilon_{j,\nu})})  d\mu_{g_0},
\end{split}
\end{equation*}
we rewrite to
\begin{equation*}
\begin{split}
E(v_\nu)&\left(\int_Mv_\nu^{\frac{2n}{n-2\gamma}} d\mu_{g_0}\right)^{\frac{n-2\gamma}{n}}\\
&=\int_M F(\bar{u}_{z_\nu})\bar{u}_{z_\nu}^{\frac{2n}{n-2\gamma}} d\mu_{g_0}+\int_M\sum_{k=1}^m\alpha_{k,\nu}^2
F(u_{(x_{k,\nu},\varepsilon_{k,\nu})}) u_{(x_{k,\nu},\varepsilon_{k,\nu})}^{\frac{2n}{n-2\gamma}} d\mu_{g_0}\\
&\hspace{4mm}+2\int_M\sum_{k=1}^m\alpha_{k,\nu} \bar{u}_{z_\nu}P_\gamma^{g_0}(u_{(x_{k,\nu},\varepsilon_{k,\nu})})  d\mu_{g_0}\\
&\hspace{4mm}+2\int_M\sum_{i<j}\alpha_{i,\nu}\alpha_{j,\nu}u_{(x_{i,\nu},\varepsilon_{i,\nu})} P_\gamma^{g_0}(u_{(x_{j,\nu},\varepsilon_{j,\nu})})  d\mu_{g_0},
\end{split}
\end{equation*}
where $F$ is defined as in (\ref{48}). Moreover, we have 
\begin{equation*}
\begin{split}
&\Big(E(\bar{u}_{z_\nu})^{\frac{n}{2\gamma}}+\sum_{k=1}^mE(u_{(x_{k,\nu},\varepsilon_{k,\nu})})^{\frac{n}{2\gamma}}\Big)^{\frac{2\gamma}{n}}\left(\int_Mv_\nu^{\frac{2n}{n-2\gamma}} d\mu_{g_0}\right)^{\frac{n-2\gamma}{n}}\\
&=\left(\int_M\Big(F(\bar{u}_{z_\nu})^{\frac{n}{2\gamma}}\bar{u}_{z_\nu}^{\frac{2n}{n-2\gamma}}
+\sum_{k=1}^mF(u_{(x_{k,\nu},\varepsilon_{k,\nu})})^{\frac{n}{2\gamma}}u_{(x_{k,\nu},\varepsilon_{k,\nu})}^{\frac{2n}{n-2\gamma}}\Big)d\mu_{g_0}\right)^{\frac{2\gamma}{n}}\\
&\hspace{8mm}\cdot\left(\int_Mv_\nu^{\frac{2n}{n-2\gamma}} d\mu_{g_0}\right)^{\frac{n-2\gamma}{n}}\\
&\geq \int_M\Big(F(\bar{u}_{z_\nu})^{\frac{n}{2\gamma}}\bar{u}_{z_\nu}^{\frac{2n}{n-2\gamma}}
+\sum_{k=1}^mF(u_{(x_{k,\nu},\varepsilon_{k,\nu})})^{\frac{n}{2\gamma}}u_{(x_{k,\nu},\varepsilon_{k,\nu})}^{\frac{2n}{n-2\gamma}}\Big)^{\frac{2\gamma}{n}}v_\nu^2d\mu_{g_0}\\
&\geq \int_MF(\bar{u}_{z_\nu}) \bar{u}_{z_\nu}^{\frac{2n}{n-2\gamma}}d\mu_{g_0}
+\int_M\sum_{k=1}^m\alpha_{k,\nu}^2F(u_{(x_{k,\nu},\varepsilon_{k,\nu})})u_{(x_{k,\nu},\varepsilon_{k,\nu})}^{\frac{2n}{n-2\gamma}}d\mu_{g_0}\\
&\hspace{4mm}+2\int_M\sum_{l=1}^m\alpha_{l,\nu}\Big(F(\bar{u}_{z_\nu})^{\frac{n}{2\gamma}}\bar{u}_{z_\nu}^{\frac{2n}{n-2\gamma}}
+\sum_{k=1}^mF(u_{(x_{k,\nu},\varepsilon_{k,\nu})})^{\frac{n}{2\gamma}}u_{(x_{k,\nu},\varepsilon_{k,\nu})}^{\frac{2n}{n-2\gamma}}\Big)^{\frac{2\gamma}{n}}\\
&\hspace{12mm}\cdot
\bar{u}_{z_\nu}u_{(x_{k,\nu},\varepsilon_{k,\nu})} d\mu_{g_0}\\
&\hspace{4mm}+2\int_M\sum_{i<j}\alpha_{i,\nu}\alpha_{j,\nu}\Big(F(u_{(x_{i,\nu},\varepsilon_{i,\nu})})^{\frac{n}{2\gamma}}u_{(x_{i,\nu},\varepsilon_{i,\nu})}^{\frac{2n}{n-2\gamma}}+F(u_{(x_{j,\nu},\varepsilon_{j,\nu})})^{\frac{n}{2\gamma}}u_{(x_{j,\nu},\varepsilon_{j,\nu})}^{\frac{2n}{n-2\gamma}}\Big)^{\frac{2\gamma}{n}}\\
&\hspace{12mm}\cdot
u_{(x_{i,\nu},\varepsilon_{i,\nu})}u_{(x_{j,\nu},\varepsilon_{j,\nu})} d\mu_{g_0}
\end{split}
\end{equation*}
by H\"{o}lder's inequality. Note that in the last inequality we used the positivity of each terms so that
\begin{align*}
    &\left(F(\bar{u}_{z_\nu})^{\frac{n}{2\gamma}}\bar{u}_{z_\nu}^{\frac{2n}{n-2\gamma}}+\sum_{k=1}^nF(u_{(x_{k,\nu},\varepsilon_{k,\nu})})^{\frac{n}{2\gamma}}u_{(x_{k,\nu},\varepsilon_{k,\nu})}^{\frac{2n}{n-2\gamma}})\right)^{\frac{2\gamma}{n}}\\
    &\geq\Big(F(u_{(x_{i,\nu},\varepsilon_{i,\nu})})^{\frac{n}{2\gamma}}u_{(x_{i,\nu},\varepsilon_{i,\nu})}^{\frac{2n}{n-2\gamma}}+F(u_{(x_{j,\nu},\varepsilon_{j,\nu})})^{\frac{n}{2\gamma}}u_{(x_{j,\nu},\varepsilon_{j,\nu})}^{\frac{2n}{n-2\gamma}}\Big)^{\frac{2\gamma}{n}}.
\end{align*}
Let $1\leq k\leq m$.
From the positivity of each terms, we have the identity
\begin{equation*}
\begin{split}
&\Big(F(\bar{u}_{z_\nu})^{\frac{n}{2\gamma}}\bar{u}_{z_\nu}^{\frac{2n}{n-2\gamma}}
+F(u_{(x_{k,\nu},\varepsilon_{k,\nu})})^{\frac{n}{2\gamma}}u_{(x_{k,\nu},\varepsilon_{k,\nu})}^{\frac{2n}{n-2\gamma}}\Big)^{\frac{2\gamma}{n}}\bar{u}_{z_\nu}u_{(x_{k,\nu},\varepsilon_{k,\nu})} \\
&\hspace{4mm}\geq F(\bar{u}_{z_\nu})\bar{u}_{z_\nu}^{\frac{n+2\gamma}{n-2\gamma}}u_{(x_{k,\nu},\varepsilon_{k,\nu})}
+c\,\varepsilon_{k,\nu}^{-\frac{n+2\gamma}{2}}1_{\{d(x_{k,\nu},x)\leq\varepsilon_{k,\nu}\}}.
\end{split}
\end{equation*}
Integrate, we obtain
\begin{equation*}
\begin{split}
&\int_M \Big(F(\bar{u}_{z_\nu})^{\frac{n}{2\gamma}}\bar{u}_{z_\nu}^{\frac{2n}{n-2\gamma}}
+F(u_{(x_{k,\nu},\varepsilon_{k,\nu})})^{\frac{n}{2\gamma}}u_{(x_{k,\nu},\varepsilon_{k,\nu})}^{\frac{2n}{n-2\gamma}}\Big)^{\frac{2\gamma}{n}}\bar{u}_{z_\nu}u_{(x_{k,\nu},\varepsilon_{k,\nu})}
d\mu_{g_0}\\
&\geq \int_MF(\bar{u}_{z_\nu})\bar{u}_{z_\nu}^{\frac{n+2\gamma}{n-2\gamma}}u_{(x_{k,\nu},\varepsilon_{k,\nu})}d\mu_{g_0}
+c\,\varepsilon_{k,\nu}^{\frac{n-2\gamma}{2}}
\end{split}
\end{equation*}
if $\nu$ is sufficiently large. We next consider a pair $i<j$. Note that (\ref{4.13})  still hold.
Integration (\ref{4.13}) over $M$ yields
\begin{equation*}
\begin{split}
&\int_M \Big(F(u_{(x_{i,\nu},\varepsilon_{i,\nu})})^{\frac{n}{2\gamma}}u_{(x_{i,\nu},\varepsilon_{i,\nu})}^{\frac{2n}{n-2\gamma}}
+F(u_{(x_{j,\nu},\varepsilon_{j,\nu})})^{\frac{n}{2\gamma}}u_{(x_{j,\nu},\varepsilon_{j,\nu})}^{\frac{2n}{n-2\gamma}}\Big)^{\frac{2\gamma}{n}}u_{(x_{i,\nu},\varepsilon_{i,\nu})}
u_{(x_{j,\nu},\varepsilon_{j,\nu})}
d\mu_{g_0}\\
&\geq \int_MF(u_{(x_{j,\nu},\varepsilon_{j,\nu})})u_{(x_{i,\nu},\varepsilon_{i,\nu})}u_{(x_{j,\nu},\varepsilon_{j,\nu})}^{\frac{n+2\gamma}{n-2\gamma}}d\mu_{g_0}
+c\Big(\frac{\varepsilon_{j,\nu}^2+d(x_{i,\nu},x_{j,\nu})^2}{\varepsilon_{i,\nu}\varepsilon_{j,\nu}}\Big)^{-\frac{n-2\gamma}{2}}
\end{split}
\end{equation*}
if $\nu$ is sufficiently large. From these, it follows that
\begin{equation*}
\begin{split}
&\Big(E(\bar{u}_{z_\nu})^{\frac{n}{2\gamma}}+\sum_{k=1}^mE(u_{(x_{k,\nu},\varepsilon_{k,\nu})})^{\frac{n}{2\gamma}}\Big)^{\frac{2\gamma}{n}}\left(\int_Mv_\nu^{\frac{2n}{n-2\gamma}} d\mu_{g_0}\right)^{\frac{n-2\gamma}{n}}\\
&\geq  \int_MF(\bar{u}_{z_\nu}) \bar{u}_{z_\nu}^{\frac{2n}{n-2\gamma}}d\mu_{g_0}
+\int_M\sum_{k=1}^m\alpha_{k,\nu}^2F(u_{(x_{k,\nu},\varepsilon_{k,\nu})})u_{(x_{k,\nu},\varepsilon_{k,\nu})}^{\frac{2n}{n-2\gamma}}d\mu_{g_0}\\
&\hspace{4mm}+2\int_M\sum_{k=1}^m\alpha_{k,\nu}
F(\bar{u}_{z_\nu})\bar{u}_{z_\nu}^{\frac{n+2\gamma}{n-2\gamma}}u_{(x_{k,\nu},\varepsilon_{k,\nu})}d\mu_{g_0}
\\
&\hspace{4mm}+2\int_M\sum_{i<j}\alpha_{i,\nu}\alpha_{j,\nu}F(u_{(x_{j,\nu},\varepsilon_{j,\nu})})u_{(x_{i,\nu},\varepsilon_{i,\nu})}u_{(x_{j,\nu},\varepsilon_{j,\nu})}^{\frac{n+2\gamma}{n-2\gamma}}d\mu_{g_0}
\\
&\hspace{4mm}+c\sum_{k=1}^m\varepsilon_{k,\nu}^{\frac{n-2\gamma}{2}}+c\sum_{i<j}\Big(\frac{\varepsilon_{j,\nu}^2+d(x_{i,\nu},x_{j,\nu})^2}{\varepsilon_{i,\nu}\varepsilon_{j,\nu}}\Big)^{-\frac{n-2\gamma}{2}}.
\end{split}
\end{equation*}
Putting these facts together, we obtain
\begin{equation*}
\begin{split}
&E(v_\nu)\left(\int_Mv_\nu^{\frac{2n}{n-2\gamma}} d\mu_{g_0}\right)^{\frac{n-2\gamma}{n}}\\
&\leq \Big(E(\bar{u}_{z_\nu})^{\frac{n}{2\gamma}}+\sum_{k=1}^mE(u_{(x_{k,\nu},\varepsilon_{k,\nu})})^{\frac{n}{2\gamma}}\Big)^{\frac{2\gamma}{n}}\left(\int_Mv_\nu^{\frac{2n}{n-2\gamma}}d\mu_{g_0}\right)^{\frac{n-2\gamma}{n}}\\
&\hspace{4mm}+2\int_M\sum_{k=1}^m\alpha_{k,\nu}
\Big(P_\gamma^{g_0}\bar{u}_{z_\nu}-F(\bar{u}_{z_\nu})\bar{u}_{z_\nu}^{\frac{n+2\gamma}{n-2\gamma}}\Big)u_{(x_{k,\nu},\varepsilon_{k,\nu})}d\mu_{g_0}\\
&\hspace{4mm}+2\int_M\sum_{i<j}\alpha_{i,\nu}\alpha_{j,\nu}
u_{(x_{i,\nu},\varepsilon_{i,\nu})}
\Big(P_\gamma^{g_0}u_{(x_{j,\nu},\varepsilon_{j,\nu})}-F(u_{(x_{j,\nu},\varepsilon_{j,\nu})})u_{(x_{j,\nu},\varepsilon_{j,\nu})}^{\frac{n+2\gamma}{n-2\gamma}}\Big)
d\mu_{g_0}\\
&\hspace{4mm}-c\sum_{k=1}^m\varepsilon_{k,\nu}^{\frac{n-2\gamma}{2}}-c\sum_{i<j}\Big(\frac{\varepsilon_{j,\nu}^2+d(x_{i,\nu},x_{j,\nu})^2}{\varepsilon_{i,\nu}\varepsilon_{j,\nu}}\Big)^{-\frac{n-2\gamma}{2}}.
\end{split}
\end{equation*}
Note that
\begin{equation*}
\begin{split}
\int_M\Big|P_\gamma^{g_0}\bar{u}_{z_\nu}-F(\bar{u}_{z_\nu})\bar{u}_{z_\nu}^{\frac{n+2\gamma}{n-2\gamma}}\Big|u_{(x_{k,\nu},\varepsilon_{k,\nu})}d\mu_{g_0}
\leq C\int_M u_{(x_{k,\nu},\varepsilon_{k,\nu})}\,d\mu_{g_0}= o(1)\varepsilon_{k,\nu}^{\frac{n-2\gamma}{2}}.
\end{split}
\end{equation*}
With this and \eqref{eq: Main interaction of bubble term estimate}, the assertion follows.
\end{proof}

\begin{cor}\label{cor6.16}
If $\nu$ is sufficiently large, then
\begin{equation*}
\begin{split}
E(v_\nu)&\leq \Big(E(u_\infty)^{\frac{n}{2\gamma}}+mY_\gamma(\mathbb{S}^n)^{\frac{n}{2\gamma}}\Big)^{\frac{2\gamma}{n}}\\
&\hspace{4mm}+C\left(\int_M u_\nu^{\frac{2n}{n-2\gamma}}|R_\gamma^{g_\nu}-s_\infty|^{\frac{2n}{n+2\gamma}} d\mu_{g_0}\right)^{\frac{n+2\gamma}{2n}(1+\delta)}.
\end{split}
\end{equation*}
\end{cor}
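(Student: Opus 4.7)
The plan is to combine Proposition \ref{prop6.15} with Proposition \ref{prop6.14} and the Schoen-bubble energy bound \eqref{eq: energy for Schoen's bubble estimation}. Proposition \ref{prop6.15} already packages the interaction and self-action information into
\[
E(v_\nu)\leq \Bigl(E(\bar{u}_{z_\nu})^{n/2\gamma}+\sum_{k=1}^m E(u_{(x_{k,\nu},\varepsilon_{k,\nu})})^{n/2\gamma}\Bigr)^{2\gamma/n}-c\sum_{k=1}^m\varepsilon_{k,\nu}^{(n-2\gamma)/2},
\]
so the task is essentially to replace $E(\bar{u}_{z_\nu})$ by $E(u_\infty)$ and each $E(u_{(x_{k,\nu},\varepsilon_{k,\nu})})$ by $Y_\gamma(\mathbb{S}^n)$, keeping track of the error terms and ensuring they can be absorbed.

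First I would invoke \eqref{eq: energy for Schoen's bubble estimation} to substitute $E(u_{(x_{k,\nu},\varepsilon_{k,\nu})})\leq Y_\gamma(\mathbb{S}^n)$ for every $k$, which is valid since Proposition \ref{prop6.7} guarantees $\varepsilon_{k,\nu}$ is small for large $\nu$. Next, Proposition \ref{prop6.14} bounds $E(\bar{u}_{z_\nu})-E(u_\infty)$. Because $\bar{u}_{z_\nu}\to u_\infty$ strongly (Proposition \ref{prop6.7}) and $E(u_\infty)>0$, a mean value expansion lifts this to
\[
E(\bar{u}_{z_\nu})^{n/2\gamma}-E(u_\infty)^{n/2\gamma}\leq C\bigl(E(\bar{u}_{z_\nu})-E(u_\infty)\bigr)\leq C\,\mathcal{E}_\nu+C\sum_{k=1}^m\varepsilon_{k,\nu}^{(n-2\gamma)(1+\delta)/2},
\]
where $\mathcal{E}_\nu:=\bigl(\int_M|R_\gamma^{g_\nu}-s_\infty|^{2n/(n+2\gamma)}u_\nu^{2n/(n-2\gamma)}\,d\mu_{g_0}\bigr)^{(n+2\gamma)(1+\delta)/(2n)}$. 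A second mean value expansion of the outer $2\gamma/n$-power, performed around the (bounded, bounded-away-from-zero) quantity $E(u_\infty)^{n/2\gamma}+mY_\gamma(\mathbb{S}^n)^{n/2\gamma}$, then yields
\[
\Bigl(E(\bar{u}_{z_\nu})^{n/2\gamma}+mY_\gamma(\mathbb{S}^n)^{n/2\gamma}\Bigr)^{2\gamma/n}\leq \Bigl(E(u_\infty)^{n/2\gamma}+mY_\gamma(\mathbb{S}^n)^{n/2\gamma}\Bigr)^{2\gamma/n}+C\mathcal{E}_\nu+C\sum_{k=1}^m\varepsilon_{k,\nu}^{(n-2\gamma)(1+\delta)/2}.
\]

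The last ingredient is the absorption of the residual $\varepsilon$-terms. Since $1+\delta>1$ and $\varepsilon_{k,\nu}\to 0$, we have $\varepsilon_{k,\nu}^{(n-2\gamma)(1+\delta)/2}=o(1)\,\varepsilon_{k,\nu}^{(n-2\gamma)/2}$, so for $\nu$ large enough the negative term $-c\sum_k\varepsilon_{k,\nu}^{(n-2\gamma)/2}$ inherited from Proposition \ref{prop6.15} dominates $C\sum_k\varepsilon_{k,\nu}^{(n-2\gamma)(1+\delta)/2}$ and the difference is non-positive. Chaining the three displays above yields exactly the claim. The only mild subtlety — and what I expect to be the most care-requiring bookkeeping step rather than a real obstacle — is verifying that the mean value expansions really produce constants that are independent of $\nu$; this is ensured by the uniform bounds $E(u_\infty)>0$, the uniform $L^\infty$ bound \eqref{eq: Linfty of u_z_nu}, and the fact that $E(\bar u_{z_\nu})$ and $E(u_{(x_{k,\nu},\varepsilon_{k,\nu})})$ are both comparable to constants depending only on the conformal class and on $\mathbb{S}^n$.
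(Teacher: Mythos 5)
Your proposal is correct and fleshes out exactly the same chain of implications that the paper compresses into a single sentence (combine Proposition~\ref{prop6.15}, Proposition~\ref{prop6.14}, and the bubble bound~\eqref{eq: energy for Schoen's bubble estimation}). You correctly identify the two non-automatic points: (a) the local Lipschitz expansions of $s\mapsto s^{n/2\gamma}$ and $s\mapsto s^{2\gamma/n}$ give $\nu$-uniform constants because $E(\bar u_{z_\nu})\to E(u_\infty)>0$ and $E(u_{(x_{k,\nu},\varepsilon_{k,\nu})})\to Y_\gamma(\mathbb{S}^n)>0$, so everything stays in a fixed compact subinterval of $(0,\infty)$; and (b) the residual $C\sum_k\varepsilon_{k,\nu}^{(n-2\gamma)(1+\delta)/2}$ from Proposition~\ref{prop6.14} is $o(1)\sum_k\varepsilon_{k,\nu}^{(n-2\gamma)/2}$ because $\delta>0$ and $\varepsilon_{k,\nu}\to 0$, hence it is absorbed by the negative term $-c\sum_k\varepsilon_{k,\nu}^{(n-2\gamma)/2}$ from Proposition~\ref{prop6.15} once $\nu$ is large. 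This absorption step is the genuine content of the corollary, and you have it.
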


\begin{proof}
    Using Proposition \ref{prop6.15}, Proposition \ref{prop6.14}, and the fact that $E(u_{(x_{k,\nu},\varepsilon_{k,\nu})})\leq Y_\gamma (\mathbb{S}^n)$ from \eqref{eq: energy for Schoen's bubble estimation}, the proof is complete.
\end{proof}

\section{Proof of Lemma \ref{prop1.1}}

From $R_\gamma^{g_\nu}=u^{-\frac{n+2\gamma}{n-2\gamma}}P_\gamma^{g_0}u$, we have
\begin{equation*}
\begin{split}
E(u_\nu)
&=\int_M u_\nu P_\gamma^{g_0} u_\nu d\mu_{g_0}\\
&=\int_M v_\nu P_\gamma^{g_0} v_\nu d\mu_{g_0}+2\int_M u_\nu^{\frac{n+2\gamma}{n-2\gamma}} R_\gamma^{g_\nu} w_\nu d\mu_{g_0}
-\int_M w_\nu P_\gamma^{g_0} w_\nu d\mu_{g_0}.
\end{split}
\end{equation*}
We rewrite to
\begin{equation}\label{172}
\begin{split}
s_\gamma^{g_\nu}
&=E(v_\nu)\left(\int_M v_\nu^{\frac{2n}{n-2\gamma}}d\mu_{g_0}\right)^{\frac{n-2\gamma}{n}}
+2\int_M u_\nu^{\frac{n+2\gamma}{n-2\gamma}} (R_\gamma^{g_\nu}-s_\infty) w_\nu d\mu_{g_0}\\
&\hspace{4mm}-\int_M w_\nu \Big(P_\gamma^{g_0} w_\nu-\frac{n+2\gamma}{n-2\gamma}s_\infty v_\nu^{\frac{4\gamma}{n-2\gamma}} w_\nu\Big) d\mu_{g_0}\\
&\hspace{4mm}+s_\infty\int_M\Big(-\frac{n+2\gamma}{n-2\gamma} v_\nu^{\frac{4\gamma}{n-2\gamma}} w_\nu^2+2(v_\nu+w_\nu)^{\frac{n+2\gamma}{n-2\gamma}} w_\nu\Big)d\mu_{g_0}.
\end{split}
\end{equation}
It follows from the normalization \eqref{1.5} that
\begin{equation}\label{173}
\int_M u_\nu^{\frac{2n}{n-2\gamma}}\,d\mu_{g_0}=\int_M(v_\nu+w_\nu)^{\frac{2n}{n-2\gamma}}d\mu_{g_0}=1.
\end{equation}
Since $x\mapsto x^{\frac{n-2\gamma}{n}}$ is a concave function, we have
\begin{equation*}
\begin{split}
\left(\int_Mv_\nu^{\frac{2n}{n-2\gamma}} d\mu_{g_0}\right)^{\frac{n-2\gamma}{n}}-1
&\leq\frac{n-2\gamma}{n}\left(\int_Mv_\nu^{\frac{2n}{n-2\gamma}} d\mu_{g_0}-1\right)\\
&=\int_M\Big(\frac{n-2\gamma}{n}v_\nu^{\frac{2n}{n-2\gamma}} -\frac{n-2\gamma}{n}(v_\nu+w_\nu)^{\frac{2n}{n-2\gamma}}\Big) d\mu_{g_0}
\end{split}
\end{equation*}
by (\ref{173}). Combining this with
(\ref{172}), we obtain
\begin{equation}\label{176}
\begin{split}
s_\gamma^{g_\nu}
&\leq s_\infty+(E(v_\nu)-s_\infty)\left(\int_M v_\nu^{\frac{2n}{n-2\gamma}}d\mu_{g_0}\right)^{\frac{n-2\gamma}{n}}\\
&\hspace{4mm}+2\int_M u_\nu^{\frac{n+2\gamma}{n-2\gamma}} (R_\gamma^{g_\nu}-s_\infty) w_\nu d\mu_{g_0}\\
&\hspace{4mm}-\int_M w_\nu \Big(P_\gamma^{g_0} w_\nu-\frac{n+2\gamma}{n-2\gamma}s_\infty v_\nu^{\frac{4\gamma}{n-2\gamma}} w_\nu\Big) d\mu_{g_0}\\
&\hspace{4mm}+s_\infty\int_M\Big(
\frac{n-2\gamma}{n}v_\nu^{\frac{2n}{n-2\gamma}}-\frac{n-2\gamma}{n}(v_\nu+w_\nu)^{\frac{2n}{n-2\gamma}}\\
&\hspace{28mm}-\frac{n+2\gamma}{n-2\gamma} v_\nu^{\frac{4\gamma}{n-2\gamma}} w_\nu^2+2(v_\nu+w_\nu)^{\frac{n+2\gamma}{n-2\gamma}} w_\nu\Big)d\mu_{g_0}.
\end{split}
\end{equation}
It follows from H\"{o}lder's inequality that
\begin{equation}\label{177}
\begin{split}
&\int_M u_\nu^{\frac{n+2\gamma}{n-2\gamma}}(R_\gamma^{g_\nu}-s_\infty) w_\nu d\mu_{g_0}\\
&\leq\left(\int_M u_\nu^{\frac{2n}{n-2\gamma}}|R_\gamma^{g_\nu}-s_\infty|^{\frac{2n}{n+2\gamma}} d\mu_{g_0}\right)^{\frac{n+2\gamma}{2n}}
\left(\int_M |w_\nu|^{\frac{2n}{n-2\gamma}} d\mu_{g_0}\right)^{\frac{n-2\gamma}{2n}}.
\end{split}
\end{equation}
Moreover, it follows from Corollary \ref{cor3.5} and Corollary \ref{cor6.10} that
\begin{equation}\label{178}
\begin{split}
&\int_M\Big(w_\nu P_\gamma^{g_0} w_\nu-\frac{n+2\gamma}{n-2\gamma} s_\infty v_\nu^{\frac{4\gamma}{n-2\gamma}}w_\nu^2\Big)d\mu_{g_0}\\
&\geq c\int_M w_\nu P_\gamma^{g_0} w_\nu d\mu_{g_0}\geq c\left(\int_M|w_\nu|^{\frac{2n}{n-2\gamma}}d\mu_{g_0}\right)^{\frac{n-2\gamma}{n}}.
\end{split}
\end{equation}
Moreover, it follows from the pointwise estimate \eqref{eq: pointwise estimate 3}
\begin{equation*}
\begin{split}
&\Big|
\frac{n-2\gamma}{n}v_\nu^{\frac{2n}{n-2\gamma}} -\frac{n-2\gamma}{n}(v_\nu+w_\nu)^{\frac{2n}{n-2\gamma}}-\frac{n+2\gamma}{n-2\gamma} v_\nu^{\frac{4\gamma}{n-2\gamma}} w_\nu^2+2(v_\nu+w_\nu)^{\frac{n+2\gamma}{n-2\gamma}} w_\nu\Big|\\
&\leq Cv_\nu^{\max\{0,\frac{4\gamma}{n-2\gamma}-1\}}|w_\nu|^{\min\{\frac{2n}{n-2\gamma},3\}}+C|w_\nu|^{\frac{2n}{n-2\gamma}}
\end{split}
\end{equation*}
that
\begin{equation*}
\begin{split}
&\int_M\Big(
\frac{n-2\gamma}{n}v_\nu^{\frac{2n}{n-2\gamma}} -\frac{n-2\gamma}{n}(v_\nu+w_\nu)^{\frac{2n}{n-2\gamma}}-\frac{n+2\gamma}{n-2\gamma} v_\nu^{\frac{4\gamma}{n-2\gamma}} w_\nu^2+2(v_\nu+w_\nu)^{\frac{n+2\gamma}{n-2\gamma}} w_\nu\Big)d\mu_{g_0}\\
&\leq C\int_Mv_\nu^{\max\{0,\frac{4\gamma}{n-2\gamma}-1\}}|w_\nu|^{\min\{\frac{2n}{n-2\gamma},3\}}d\mu_{g_0}+C\int_M|w_\nu|^{\frac{2n}{n-2\gamma}}d\mu_{g_0}\\
&\leq C\left(\int_M|w_\nu|^{\frac{2n}{n-2\gamma}}d\mu_{g_0}\right)^{\frac{n-2\gamma}{2n}\min\{\frac{2n}{n-2\gamma},3\}}
\end{split}
\end{equation*}
This together with (\ref{176})-(\ref{178}) implies that
\begin{equation*}
\begin{split}
s_\gamma^{g_\nu}
&\leq s_\infty+(E(v_\nu)-s_\infty)\left(\int_M v_\nu^{\frac{2n}{n-2\gamma}}d\mu_{g_0}\right)^{\frac{n-2\gamma}{n}}\\
&+C\left(\int_M u_\nu^{\frac{2n}{n-2\gamma}}|R_\gamma^{g_\nu}-s_\infty|^{\frac{2n}{n+2\gamma}} d\mu_{g_0}\right)^{\frac{n+2\gamma}{2n}}
\end{split}
\end{equation*}
From (\ref{2.7}), Corollary \ref{cor5.7}, and Corollary \ref{cor6.16}, we obtain
\[
E(v_\nu)-s_\infty\leq C\left(\int_M u_\nu^{\frac{2n}{n-2\gamma}}|R_\gamma^{g_\nu}-s_\infty|^{\frac{2n}{n+2\gamma}} d\mu_{g_0}\right)^{\frac{n+2\gamma}{2n}(1+\delta)}.
\]
Therefore, we conclude
\begin{equation*}
s_\gamma^{g_\nu}\leq s_\infty
+C\left(\int_M u_\nu^{\frac{2n}{n-2\gamma}}|R_\gamma^{g_\nu}-s_\infty|^{\frac{2n}{n+2\gamma}} d\mu_{g_0}\right)^{\frac{n+2\gamma}{2n}(1+\delta)}.
\end{equation*}
This completes the proof of Proposition
\ref{prop1.1}.

\appendix

\appendix

\section{Axillary estimates on differences with exponents}

\begin{lem}
    For any $a,b\geq 0$ and $p>1$,
    \begin{equation}\label{eq: pointwise estimate 1}
        |a^p-b^p|\leq C(|a-b|^p+b^{p-1}|a-b|),
    \end{equation}
    \begin{equation}\label{eq: pointwise estimate 2}
        |a^p-b^p-pb^{p-1}(a-b)|\leq C(b^{\max\{0,p-2\}}|a-b|^{\min\{p,2\}}+|a-b|^p),
    \end{equation}
    and
    \begin{equation}\label{eq: pointwise estimate 3}
    \begin{split}
        &\left|a^{p}-b^{p}-p\,b^{p-1}(a-b)
               -\frac12 p(p-1)b^{p-2}(a-b)^{2}\right|\\
               &\leq C\left(b^{\max\{0,p-3\}}|a-b|^{\min\{p,3\}}+|a-b|^{p}\right),
    \end{split}
    \end{equation}
    where the constant $C$ depends only on $p$.
\end{lem}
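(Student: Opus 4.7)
The plan is to prove all three estimates simultaneously by a scaling reduction followed by a two-regime case analysis, exploiting Taylor's theorem for $f(x) = x^p$.

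First, I would dispose of the trivial case $b = 0$, where each inequality reads $a^p \leq C a^p$. For $b > 0$, both sides of each inequality are homogeneous of degree $p$ in $(a, b)$; setting $t := a/b \geq 0$ and $h := t - 1$ and using the arithmetic identity $\max\{0, p-k\} + \min\{p, k\} = p$ for $k = 2, 3$, the three statements reduce respectively to
\begin{align*}
|t^p - 1| &\leq C(|h|^p + |h|), \\
|t^p - 1 - p h| &\leq C(|h|^{\min\{p,2\}} + |h|^p), \\
\bigl|t^p - 1 - p h - \tfrac{1}{2}p(p-1)h^2\bigr| &\leq C(|h|^{\min\{p,3\}} + |h|^p).
\end{align*}

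In the \emph{small regime} $|h| \leq 1/2$, so that $t \in [1/2, 3/2]$, the derivatives $f^{(k)}(s) = p(p-1)\cdots(p-k+1)\,s^{p-k}$ are uniformly bounded. Taylor's theorem with integral remainder gives that the $k$-th order error is bounded by $C|h|^k$ for $k = 1, 2, 3$; and since $|h| \leq 1$, one has $|h|^k \leq |h|^{\min\{p,k\}}$ (the lower exponent dominates on $[0, 1]$). In the \emph{large regime} $|h| \geq 1/2$, I use $t \leq 1 + |h| \leq 3|h|$ to deduce $t^p \leq 3^p|h|^p$ and $1 \leq 2^p|h|^p$, so the triangle inequality yields LHS $\leq C|h|^p + p|h| + C h^2$ in the most elaborate case. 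The polynomial lower-order terms are absorbed: if $|h| \geq 1$ each is $\leq|h|^p$ (using $p > 1$); if $|h| \in [1/2, 1]$ each is bounded above by an absolute constant, which is itself $\leq C'|h|^{\min\{p, k\}}$ since $|h|^{\min\{p, k\}} \geq (1/2)^{\min\{p,k\}}$ is bounded below.

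The only real subtlety is the bookkeeping of the truncation $\min\{p, k\}$: it reflects the fact that $f(x) = x^p$ is only $C^{\lfloor p \rfloor}$ at the origin, so Taylor remainders of order higher than $\lfloor p \rfloor$ would naturally pick up a $b^{p-k}$ singularity, which is absorbed inside the homogenized form $|h|^{\min\{p,k\}}$ at the cost of one power of $|h|$ for each unit that $p$ falls short of $k$. I see no obstacle beyond this; in particular, since the estimates \eqref{eq: pointwise estimate 2} and \eqref{eq: pointwise estimate 3} are applied in the paper with $b$ bounded below away from $0$ (for \eqref{eq: pointwise estimate 3}, with $p = \tfrac{2n}{n-2\gamma} \geq 2$), no further hypotheses on $p$ are needed in practice.
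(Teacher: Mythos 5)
Your approach is genuinely different from the paper's, and for \eqref{eq: pointwise estimate 1} and \eqref{eq: pointwise estimate 2} it is both correct and arguably more robust. The paper proves \eqref{eq: pointwise estimate 2} by writing the second-order Lagrange remainder $\tfrac12 p(p-1)\xi^{p-2}\delta^2$ with $\xi$ between $a$ and $b$, then asserting $\xi^{p-2}\le b^{p-2}$ for $1<p<2$ because $s\mapsto s^{p-2}$ is decreasing. That direction of the inequality requires $\xi\ge b$, which fails precisely when $a<b$ (there $\xi\in[a,b]$ and $\xi^{p-2}\ge b^{p-2}$); the estimate is still true, but the paper's Lagrange-form argument does not establish it. Your homogeneity reduction to $t=a/b$ sidesteps this entirely: in the regime $|h|\ge 1/2$ you only need the crude bounds $t^p\le 3^p|h|^p$, $1\le 2^p|h|^p$, and $|h|\le|h|^p$, none of which sees the one-sided monotonicity subtlety.

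For \eqref{eq: pointwise estimate 3}, however, your argument has a real gap that you should not paper over. In the large regime you claim that each lower-order term is $\le |h|^p$ for $|h|\ge 1$ ``using $p>1$''; that is fine for $p|h|$, but for the term $\tfrac12 p(p-1)h^2$ it requires $p\ge 2$. This is not a reparable technicality: estimate \eqref{eq: pointwise estimate 3} is genuinely \emph{false} for $1<p<2$. Indeed take $b=1$ and $a\to\infty$: the left-hand side is dominated by $\tfrac12 p(p-1)(a-1)^2\sim Ca^2$, while the right-hand side reads $C(|a-1|^{\min\{p,3\}}+|a-1|^p)=2C|a-1|^p\sim Ca^p$, and $a^2/a^p\to\infty$ since $p<2$. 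You do sense this — you note that $p=\tfrac{2n}{n-2\gamma}\ge 2$ in the applications — but you frame it as ``no further hypotheses are needed in practice'' and attribute safety to ``$b$ bounded below away from $0$,'' which misdiagnoses the failure: it occurs as $a/b\to\infty$, not as $b\to 0$, and is independent of any lower bound on $b$. The correct thing to say is that \eqref{eq: pointwise estimate 3} requires the additional hypothesis $p\ge 2$, under which your proof closes cleanly; the paper's own ``follows similarly'' for \eqref{eq: pointwise estimate 3} is silent on this and should likewise carry that restriction.
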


\begin{proof}
    Since
    \[
        |a^{p}-b^{p}|\le C(a^{p-1}+b^{p-1})|a-b|
    \]
    and
    \[
        a^{p-1}=(b+(a-b))^{p-1}\le C(b^{p-1}+|a-b|^{p-1}),
    \]
    \eqref{eq: pointwise estimate 1} follows immediately.

    We show \eqref{eq: pointwise estimate 2}. Set $\delta:=a-b$ and expand $f(s)=s^{p}$ to second order at~$b$:
    \[
        a^{p}=b^{p}+p\,b^{p-1}\delta+\frac12 p(p-1)\xi^{p-2}\delta^{2},
    \]
    for some $\xi\in[\min\{a,b\},\max\{a,b\}]$.
    Hence
    \[
        |a^{p}-b^{p}-p\,b^{p-1}\delta|
        \;=\;
        C\,\xi^{\,p-2}|\delta|^{2},
    \]
    with $C=\frac12 p(p-1)$.
    
    Now, suppose $p\geq 2$. Because $p-2\ge0$ and $\xi$ lies between $a$ and $b$,
    \[
        \xi^{p-2}\le(a+b)^{p-2}\le C(a^{p-2}+b^{p-2}),
    \]
    and
    \[
        a^{p-2}=(b+(a-b))^{p-2}\leq C(b^{p-2}+|\delta|^{p-2}),
    \]
    whence
    \[
        |a^{p}-b^{p}-p\,b^{p-1}\delta|
        \le C\bigl(b^{p-2}|\delta|^{2}+|\delta|^{p}\bigr).
    \]
    On the other hand, if $1<p<2$, then $s^{p-2}$ is decreasing. Consequently $\xi^{p-2}\le b^{p-2}$ and, as in the usual dichotomy
    $|\delta|\lessgtr b$, one shows $b^{p-2}|\delta|^{2}\le|\delta|^{p}$.
    Thus the same bound as above holds, which is precisely
    \eqref{eq: pointwise estimate 2}.

    \eqref{eq: pointwise estimate 3} follows similarly to \eqref{eq: pointwise estimate 2}. This completes the proof.
\end{proof}

\section{Construction of test bubbles}\label{Appendix B}


In this appendix, we introduce the test bubbles constructed in \cite{mayer2024fractional}, and moreover provide the proof of $L^{\frac{2n}{n+2\gamma}}$ estimate on the error of the bubble \ref{eq: L 2n / n+2gamma estimate on bubble}. In the following, we use notations $B_r^g(x)$ and $B_r^{g_+,+}(x)$ for geodesic balls on $M$ and $X$, centered at $x$, with respect to the metric $g$ and $g_+$, respectively.

For the construction of the bubbles, we are following \cite{mayer2024fractional}. Using the existence of conformal normal coordinates, there exists for every $x\in M$ a conformal factor $u_x\in C^\infty(M)$ such that
\[
    u_x>0,\quad \frac{1}{C}\leq u_x\leq C,\quad u_x(x)=1,\quad\text{and}\quad \nabla u_x(x)=0,
\]
inducing a conformal normal coordinate system close to $x$ on $M$, in particular in normal coordinates with respect to 
\[
    g_x:=u_x^{\frac{4}{n-2\gamma}}g_0\quad\text{with}\quad d\mu_{g_x}=u_x^{\frac{2n}{n-2\gamma}}d\mu_{g_0},
\]
we have for small $\varepsilon>0$, that $g_x=\delta+O(|x|^2)$, $\det g_x\equiv 1$ on $B_\varepsilon^{g_x}(x)$. 
As clarified in \cite[Subsection 3.2]{mayer2021asymptotics}, the conformal factor $u_x$ then naturally extends onto $X$ via
\[
    u_x=\left(\frac{y_x}{y}\right)^{\frac{n-2\gamma}{2}},
\]
where $y_x$ close to the boundary $M$ is the unique geodesic defining function, for which
\[
    g_x=y_x^2g_+,\quad g_x=dy_x^2+g_{x,y_x}\quad\text{near}\quad M\quad\text{with}\quad g_{x,y_x}|_M=g_x
\]
and there still holds $H_{g_x}=0$. Consequently
\[
    g_x=\delta+O(y+|x|^2)\quad\text{and}\quad\det g_x=1+O(y^2)\quad\text{in}\quad B_\varepsilon^{g_x,+}(x).
\]
We introduce an elliptic operator on $X$
\[
    D_{g_x}U=-\text{div}_{g_x}(y^{1-2\gamma}\nabla_{g_x}U)+E_{g_x}U,
\]
where
\[
    E_{g_x}:=y^{\frac{1-2\gamma}{2}}L_{g_x}y^{\frac{1-2\gamma}{2}}-\left(\frac{R_{g_+}}{c_n}+s(n-s)\right)y^{(1-2\gamma)-2},
\]
with
\[
    L_{g_x}=\Delta_{g_x}+\frac{R_g}{c_n},\quad c_n=\frac{4n}{n-1},\quad s=\frac{n}{2}+\gamma.
\]
It can be shown that (see \cite[(1-6)]{KimMussoWei})
\begin{equation}\label{eq: Egx approximation}
    E_{g_x}=\frac{n-2\gamma}{4n}(R_{g_x}-(n(n+1)+R_{g_+})y^{-2})y^{1-2\gamma}\quad\text{near}\quad M.
\end{equation}
Moreover, from $H\equiv 0$, we have
\begin{equation}\label{eq: decying order in Egx from zero mean curvature}
    n(n+1)+R_{g_+}=O(y),
\end{equation}
see \cite[(2-3) and Lemma 2.3]{KimMussoWei}.

With respect to $g_x$-normal coordinates $\mathbf{x}=\mathbf{x}_x$ centered at $x$ we define the standard bubble
    \[
        \delta_{x,\varepsilon}:=\left(\frac{\varepsilon}{\varepsilon^2+|\mathbf{x}|^2}\right)^{\frac{n-2\gamma}{2}}\quad\text{on}\quad B_{4\rho_0}^{g_x}(x)
    \]
for some small $\rho_0>0$, identifying $M\ni x\sim 0\in\mathbb{R}^n$. Note that on $\mathbb{R}^n$ with usual Euclidean metric, the standard bubble satisfies
    \[
        (-\Delta_{\mathbb{R}^n})^\gamma \delta_{x,\varepsilon}=c_{n,\gamma}^*\delta_{x,\varepsilon}^{\frac{n+2\gamma}{n-2\gamma}},\quad\text{on}\quad\mathbb{R}^n.
    \]
with positive constant $c_{n,\gamma}^*>0$ only depending on $n,\gamma$. And the Schoen's bubble associated to $\delta_{x,\varepsilon}$ is defined by
    \[
        \varphi_{x,\varepsilon,\delta}=\eta_{x,\delta}\delta_{x,\varepsilon}+(1-\eta_{x,\varepsilon})\varepsilon^{\frac{n-2\gamma}{2}}\frac{G_{g_x}(\cdot,x)}{c_{n,\gamma}}\quad\text{on}\quad M,
    \]
where $G_{g_x}$ is the Green's function associated with the metric $g_x$, $c_{n,\gamma}>0$ is the constant in the Positive Mass conjecture, and $\eta_{x,\delta}$ is a cut-off function defined in $g_x$-normal Fermi-coordinates by
    \[
        \left\{
            \begin{alignedat}{2}
                \eta_{x,\delta}:=&\eta\left(\frac{y^2+|\mathbf{x}|^2}{\delta^2}\right),&\\
                \eta\equiv& 1&\quad&\text{on}\quad [0,1].\\
                \eta\equiv&0&\quad&\text{on}\quad[2,\infty).
            \end{alignedat}
        \right.
    \]
    In what follows we will always choose $\delta\sim \varepsilon^{1/k}$ with $1\ll k\in\mathbb{N}$, and hence may write $\varphi_{x,\varepsilon,\delta}=\varphi_{x,\varepsilon}$ by abusing the notation. We define our Projective bubbles on $X$ by
    \[
        \overline{\varphi_{x,\varepsilon}}(z):=K_{g_x}\ast \varphi_{x,\varepsilon}(z):=\int_{M}K_{g_x}(z,\xi) \varphi_{x,\varepsilon}(\xi) \,d\mu_{g_x}(\xi),
    \]
    where $K_g$ denotes the Poisson Kernel introduced in \cite{mayer2021asymptotics,mayer2024fractional}. Given this definition, $\overline{\varphi_{x,\varepsilon}}$ uniquely solves 
    \[
        \left\{
            \begin{alignedat}{2}
                D_{g_x}\overline{\varphi_{x,\varepsilon}}&=0&\quad\text{in}\quad X\\
                \overline{\varphi_{x,\varepsilon}}&=\varphi_{x,\varepsilon}&\quad\text{on}\quad M,
            \end{alignedat}
        \right.
    \]
    and is the canonical extension of $\varphi_{x,\varepsilon}$ to $X$ with respect to $D_{g_x}$.

    On the other hand, instead of patch-and-extend type of bubble $\overline{\varphi_{x,\varepsilon}}$ on $X$, one can consider the extend-and-patch type bubble. Using $g_x$-normal Fermi-Coordinates $z=z_x=(\mathbf{x}_x,y)=(\mathbf{x},y)$, we consider as a second extension
    \[
        \widetilde{\varphi_{x,\varepsilon}}:=\eta_{x,\delta}\hat{\delta}_{x,\varepsilon}+(1-\eta_{x,\delta})\varepsilon^{\frac{n-2\gamma}{2}}\frac{\Gamma_x}{c_{n,\gamma}},\quad \Gamma_x=\Gamma_{g_x}(\cdot,x),
    \]
    where $\Gamma_{g_x}$ is a extended Green's function, i.e.
    \[
        \Gamma_{g_x}(z,x)=(K_{g_x}\ast G_{g_x}(\cdot,x))(z)=\int_M K_{g_x}(z,\xi)G_{g_x}(\xi,x)\,d\mu_{g_x}(\xi),\quad z\in \overline{X},\,\xi\in M.
    \]
    and $\hat{\delta}_{x,\varepsilon}$ is the extended standard bubble, i.e.
    \[
        \left\{
            \begin{alignedat}{3}
                \text{div}(y^{1-2\gamma}\nabla \hat{\delta}_{x,\varepsilon})&=0&\quad&\text{in}\quad&&\mathbb{R}_+^{n+1}\\
                \hat{\delta}_{x,\varepsilon}&=\delta_{x,\varepsilon}&\quad&\text{on}\quad &&\mathbb{R}^n\\
                -c_\gamma\lim_{y\rightarrow 0}y^{1-2\gamma}\partial_y\hat{\delta}_{x,\varepsilon}&=c^*_{n,\gamma}\delta_{x,\varepsilon}^{}\frac{n+2\gamma}{n-2\gamma}&\quad&\text{on}\quad&&\mathbb{R}^n.
            \end{alignedat}
        \right.
    \]
    Under this construction, $\widetilde{\varphi_{x,\varepsilon}}$ enjoys weaker identities, namely
    \[
        \left\{
            \begin{alignedat}{3}
                D_{g_x}\widetilde{\varphi_{x,\varepsilon}}&=0&&\quad\text{in}\quad&& X\backslash B_{2\delta}^{g_x,+}(x)\\
                \widetilde{\varphi_{x,\varepsilon}}&=\varphi_{x,\varepsilon}&&\quad\text{on}\quad&&M\\
                -c_\gamma\lim_{y\rightarrow 0}y^{1-2\gamma}\partial_y\widetilde{\varphi_{x,\varepsilon}}&=c^*_{n,\gamma}\eta_{x,\delta}\delta_{x,\varepsilon}^{\frac{n+2\gamma}{n-2\gamma}}&&\quad\text{on}\quad&&M.
            \end{alignedat}
        \right.
    \]
    Finally, we define the test bubbles as
    \begin{equation}\label{eq: def of test bubble}
        u_{(x,\varepsilon)}:=u_x\varphi_{x,\varepsilon}\quad\textrm{on}\quad M,
    \end{equation}
    where $u_x$ is the conformal factor inducing a conformal normal coordinate system close to $x\in M$.

\section{Proof of \texorpdfstring{\eqref{eq: L 2n / n+2gamma estimate on bubble}}{L2n/(n+2gamma) estimate on bubble}}\label{appendix C}  
    Using $g_x:=u_x^{\frac{4}{n-2\gamma}}g_0$, we compute 
    \begin{equation}\label{eq: Convergence estimate of bubble proof 1}
        \begin{split}
            &\int_M\left|P_\gamma^{g_0}u_{(x,\varepsilon)}-s_\infty u_{(x,\varepsilon)}^{\frac{n+2\gamma}{n-2\gamma}}\right|^{\frac{2n}{n+2\gamma}}\,d\mu_{g_0}\\
            &=\int_M\left|P_\gamma^{g_x}\varphi_{x,\varepsilon}-s_\infty \varphi_{x,\varepsilon}^{\frac{n+2\gamma}{n-2\gamma}}\right|^{\frac{2n}{n+2\gamma}}\,d\mu_{g_x}\\
            &=\lim_{y\rightarrow 0}\int_M\left|c_\gamma y^{1-2\gamma}\partial_y \overline{\varphi_{x,\varepsilon}}+s_\infty \overline{\varphi_{x,\varepsilon}}^{\frac{n+2\gamma}{n-2\gamma}}\right|^{\frac{2n}{n+2\gamma}}\,d\mu_{g_x}\\
            &\leq C\lim_{y\rightarrow 0}\bigg(\int_M \left|y^{1-2\gamma}\partial_y(\overline{\varphi_{x,\varepsilon}}-\widetilde{\varphi_{x,\varepsilon}})\right|^{\frac{2n}{n+2\gamma}}\,d\mu_{g_x}
            \\
            &\hspace{24mm}+\int_M\left|y^{1-2\gamma}\partial_y\widetilde{\varphi_{x,\varepsilon}}^x+s_\infty \widetilde{\varphi_{x,\varepsilon}}^{\frac{n+2\gamma}{n-2\gamma}}\right|^{\frac{2n}{n+2\gamma}}\,d\mu_{g_x}\bigg).
        \end{split}
    \end{equation}
    Note that we have used $\overline{\varphi_{x,\varepsilon}}=\widetilde{\varphi_{x,\varepsilon}}=\varphi_{x,\varepsilon}$ on $M$. 
    
    As long as $n+2\gamma>4$, the first term on the right hand side of \eqref{eq: Convergence estimate of bubble proof 1} is estimated by 
    \begin{equation}\label{eq: Convergence estimate of bubble proof 2}
        \begin{split}
            &\lim_{y\rightarrow 0}\int_M \left|y^{1-2\gamma}\partial_y(\overline{\varphi_{x,\varepsilon}}-\widetilde{\varphi_{x,\varepsilon}})\right|^{\frac{2n}{n+2\gamma}}=o(1),
        \end{split}
    \end{equation}
    by using Lemma \ref{lem: gradient estimate with degenerate right hand side} below, alongside \cite[Lemma 4.1]{mayer2021asymptotics} (note that by using Lemma \ref{lem: gradient estimate with degenerate right hand side} inductively, you can make the singular part integrable in $L^{\frac{2n}{n+2\gamma}}$, for all $n\geq 2$ and $\gamma\in (0,1)$. See Remark \ref{rmk: inductive use of the estimate to remove singularity}.).
    
    On the other hand, using 
    \[
        \lim_{y\rightarrow 0}y^{1-2\gamma}\partial_y\widetilde{\varphi_{x,\varepsilon}}=-s_\infty \delta_{(x,\varepsilon)}^{\frac{n+2\gamma}{n-2\gamma}},
    \]
    (see \cite[p.2582]{mayer2024fractional}) we can estimate the second term: with the choice of $\delta=\varepsilon^{1/k}$, $k\gg 1$, 
    \begin{equation}\label{eq: Convergence estimate of bubble proof 4}
        \begin{split}
            &\lim_{y\rightarrow 0}\int_M\left|y^{1-2\gamma}\partial_y\widetilde{\varphi_{x,\varepsilon}}+s_\infty \widetilde{\varphi_{x,\varepsilon}}^{\frac{n+2\gamma}{n-2\gamma}}\right|^{\frac{2n}{n+2\gamma}}\,d\mu_{g_x}\\
            &\leq \int_M \chi_{B_{2\delta}^{g_x}(x)\backslash B_\delta^{g_x}(x)}\delta_{(x,\varepsilon)}^{\frac{2n}{n-2\gamma}}+\chi_{(B_{2\delta}^{g_x}(x))^c}\varepsilon^n G(\cdot,x)^{\frac{2n}{n-2\gamma}}\,d\mu_{g_x}\\
            &\leq C\left(\int_\delta^{2\delta}r^{n-1}\left(\frac{\varepsilon}{\varepsilon^2+r^2}\right)^n\,dr+\int_\delta^\infty \varepsilon^n r^{n-1}r^{-(n-2\gamma)\frac{2n}{n-2\gamma}}\,dr\right)\\
            &\leq C(\varepsilon/\delta)^n\\
            &=o(1).
        \end{split}
    \end{equation}
    We complete the proof after combining \eqref{eq: Convergence estimate of bubble proof 1}--\eqref{eq: Convergence estimate of bubble proof 2}--\eqref{eq: Convergence estimate of bubble proof 4}.

Finally, we provide the next pointwise gradient estimate from the extension equation with degenerating right hand side, of type \cite[Lemma 4.1, (1)]{mayer2024fractional}.

\begin{lem}\label{lem: gradient estimate with degenerate right hand side}
    Fix $x\in M$, and let $\gamma\in (0,1)$, and $U:B_1^{g_+,+}(x)\rightarrow\mathbb{R}$ be a function satisfying
    \[
        D_{g_x}U=\frac{f}{y^{\gamma}|z|^{1-\gamma}}\quad\textup{in}\quad B_1^{g_+,+}(x),
    \]
    for some $f\in L^\infty(B_2^{g_+,+}(x))$, where $z=(\mathbf{x},y)$ is $g_x$-normal Fermi-coordinate at $x$. Moreover, assume $U\equiv 0$ on $B_1^{g_+,+}(x)$. Then there exists $C=C(n,\gamma,X,g_+)>0$ and $c=c(n,\gamma,X,g_+)>0$ such that
    \begin{equation}\label{eq: pointw+gradientn estimate with rhs}
        y^{-2\gamma}|U(x',y)|\leq C\alpha_{x'},\quad \forall (x',y)\in B_{\tilde{\delta}}^{g_+,+}(x)\cap \{y<c|\mathbf{x}|\},
    \end{equation}
    and
    \begin{equation}
        \lim_{y\rightarrow 0}y^{1-2\gamma}|\partial_y U(x',y)|\leq C\alpha_{x'},\quad\forall x'\in B_{\tilde{\delta}}^{g_x}(x),
    \end{equation}
    where 
    \begin{equation}\label{eq: def of alpha}
        \alpha_{x'} =\max\left\{\frac{\|f\|_{L^\infty(B_1^{g_+,+}(x))}}{d^{1-\gamma^2}_{g_x}(x,x')},\frac{\|f\|_{L^\infty(B_1^{g_+,+}(x))}^{\frac{2}{1+\gamma}}\|U\|_{L^\infty(B_{cd_{g_x}(x,x')/2}^{g_+,+}(x'))}}{d_{g_x}(x,x')^2}\right\}
    \end{equation}
    and
    \[
    \tilde{\delta}\sim_{n,\gamma,X,g_+}\min\{\|f\|_{L^\infty(B_1^{g_+,+}(x))}^{-\frac{1}{2-\gamma-\gamma^2}},\|f\|_{L^\infty(B_1^{g_+,+}(x))}^{-\frac{2(3-\gamma)}{1-\gamma^2}}\|U\|_{L^\infty(B_1^{g_+,+}(x))}^{-\frac{1}{1-\gamma}},1\}.
    \]
\end{lem}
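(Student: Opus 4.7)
The strategy is a comparison argument in the half-ball $B:=B^{g_+,+}_{r/2}(x')$ with $r:=d_{g_x}(x,x')$, followed by a dyadic iteration to sharpen the exponents. Because $|z|\geq r/2$ on $B$, the right-hand side is pointwise bounded by
\[
\left|\frac{f(z)}{y^\gamma|z|^{1-\gamma}}\right|\leq C\,\|f\|_{L^\infty(B^{g_+,+}_1(x))}\,r^{\gamma-1}\,y^{-\gamma},
\]
and the task reduces to producing a $D_{g_x}$-supersolution whose value at $(x',y)$ decays like $y^{2\gamma}\alpha_{x'}$.

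The key inputs are two elementary ODE identities:
\[
-\partial_y\bigl(y^{1-2\gamma}\partial_y\,y^{2\gamma}\bigr)=0,\qquad \lim_{y\to 0}y^{1-2\gamma}\partial_y\,y^{2\gamma}=2\gamma,
\]
which identify $y^{2\gamma}$ as the natural boundary mode of the weighted operator, and
\[
-\partial_y\bigl(y^{1-2\gamma}\partial_y(-y^{1+\gamma})\bigr)=(1-\gamma^2)\,y^{-\gamma},
\]
which provides a particular solution tuned exactly to the singular forcing $y^{-\gamma}$. I would split $U=U^{\mathrm{hom}}+U^{\mathrm{part}}$, where $U^{\mathrm{hom}}$ is the $D_{g_x}$-harmonic lift of the lateral boundary data on $B$, and compare $U^{\mathrm{part}}$ with the radial barrier
\[
V(y):=\frac{C\,\|f\|_\infty\, r^{\gamma-1}}{1-\gamma^2}\bigl(y^{2\gamma}-y^{1+\gamma}\bigr).
\]
The lower-order term $E_{g_x}$ is absorbable for small $r$ by \eqref{eq: Egx approximation}--\eqref{eq: decying order in Egx from zero mean curvature}, and the non-tangential restriction $y<c|\mathbf{x}|$ in the conclusion is precisely what is needed for the purely radial $V(y)$ to dominate $|U^{\mathrm{part}}|$ on the curved part of $\partial B$.

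Applying the maximum principle yields $|U^{\mathrm{part}}(x',y)|\leq C\,y^{2\gamma}\|f\|_\infty r^{\gamma-1}$, while a Hopf/boundary-Harnack estimate for the $A_2$-degenerate weight $y^{1-2\gamma}$ yields $|U^{\mathrm{hom}}(x',y)|\leq C\,y^{2\gamma}\,r^{-2\gamma}\,\|U\|_{L^\infty(B)}$. These first-pass bounds give only a crude version of $\alpha_{x'}$ with exponents $\gamma-1$ and $-2\gamma$. To upgrade $r^{\gamma-1}$ to $r^{-(1-\gamma^2)}$ and to produce the balancing term $\|f\|_\infty^{2/(1+\gamma)}\|U\|_\infty/r^2$, I would iterate the comparison on the dyadic half-balls $B^{g_+,+}_{2^{-j}r/2}(x')$: at each scale the current $L^\infty$-bound of $U$ replaces the lateral data for the next comparison, producing a geometric improvement by a factor $r^{\gamma^2}$ per step, and the resulting geometric series returns exactly the two alternatives in the definition of $\alpha_{x'}$.

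The derivative statement $\lim_{y\to 0}y^{1-2\gamma}|\partial_y U(x',y)|\leq C\alpha_{x'}$ then follows by computing $\lim_{y\to 0}y^{1-2\gamma}\partial_y V(y)$ (which is explicitly $2\gamma$ times the $y^{2\gamma}$-coefficient of the final barrier) and invoking the weighted Schauder regularity for the degenerate equation after rescaling the cone to unit size. The principal obstacle is the sharp exponent $1-\gamma^2$: the naive comparison only yields $r^{\gamma-1}$, and the upgrade requires the dyadic iteration above together with careful preservation of the non-tangential cone $y<c|\mathbf{x}|$ across all scales.
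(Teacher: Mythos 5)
Your strategy---decompose $U=U^{\mathrm{hom}}+U^{\mathrm{part}}$, hit $U^{\mathrm{part}}$ with a purely radial barrier $V(y)$ tuned to the one--dimensional forcing $y^{-\gamma}$, and handle $U^{\mathrm{hom}}$ by boundary Harnack---is genuinely different from the paper's single--barrier argument, and it would in principle be cleaner. The paper instead compares $U$ directly against one composite function
\[
b=\tfrac{\alpha}{2}|z'|^2-\beta(y^2-y^{2\gamma})-\theta\,\frac{y^{1+\gamma}}{|\mathbf{x}|^{1-\gamma}},
\]
whose three coefficients $\alpha$, $\beta$, $\theta$ are \emph{coupled} to each other and to the cone opening $c$. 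That coupling is exactly what you lose, and it shows up in two places where your account breaks down.

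First, the iteration goes in the wrong direction. For $r<1$ and $\gamma\in(0,1)$ you have $\gamma-1>-(1-\gamma^2)$, so $r^{\gamma-1}<r^{-(1-\gamma^2)}$, and likewise $r^{-2\gamma}<r^{-2}$. If your first--pass estimate $|U^{\mathrm{part}}|\lesssim y^{2\gamma}\|f\|_\infty r^{\gamma-1}$ and $|U^{\mathrm{hom}}|\lesssim y^{2\gamma}r^{-2\gamma}\|U\|_\infty$ were correct, it would already be \emph{stronger} than the stated bound, and there would be nothing to upgrade. Conversely, a dyadic iteration that ``produces'' $r^{-(1-\gamma^2)}$ and $r^{-2}$ from those starting exponents would be degrading the estimate, not improving it. This inconsistency is a signal that the first pass does not in fact deliver those exponents: the sharp exponent $1-\gamma^2=(1-\gamma)(1+\gamma)$ in the paper does not come from iterating, but from the constraint $c\lesssim(\alpha/\theta)^{1/(1+\gamma)}$ that ties the cone opening $c$ to the ratio of barrier coefficients, together with the boundary comparison on $\partial B^{g_+,+}_{cd_{g_x}(x,x')/2}(x')$. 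Your decomposition approach has no analogue of that constraint, and it is not clear how the combination $\|f\|_\infty^{2/(1+\gamma)}\|U\|_\infty r^{-2}$ would arise from a geometric series in $r^{\gamma^2}$.

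Second, the reason you give for the non--tangential restriction $\{y<c|\mathbf{x}|\}$ is vacuous. You claim the cone is needed so that the radial barrier dominates $|U^{\mathrm{part}}|$ on the lateral boundary of $B$, but $U^{\mathrm{part}}$ vanishes there by construction, so $V\geq 0=|U^{\mathrm{part}}|$ trivially. In the paper the cone is used for an entirely different purpose: $D_{g_x}$ applied to the singular barrier term $y^{1+\gamma}|\mathbf{x}|^{\gamma-1}$ generates the horizontal Laplacian $y^{2-\gamma}\Delta_{g_x}|\mathbf{x}|^{\gamma-1}$, and the condition $y<c|\mathbf{x}|$ is exactly what makes that error term subdominant to the main forcing $\frac{1-\gamma^2}{y^\gamma|\mathbf{x}|^{1-\gamma}}$ (see \eqref{eq: lower cone condition}). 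Your radial barrier has no $|\mathbf{x}|$--dependence, so this term does not arise for you at all, and the cone would then have to be justified differently (or discarded, which would change the statement). Until you either reconcile the exponent arithmetic or account for where the cone and the $\|f\|_\infty^{2/(1+\gamma)}$ factor actually come from, the proposal does not establish the lemma.
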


\begin{proof}
    We prove by a barrier argument. Let $\alpha,\beta,\theta>0$ be constants to be chosen later. For each $x'\in B_1^{g_x}(x)$, define a barrier $b:X\rightarrow\mathbb{R}$ by
    \[
        b:=\frac{\alpha}{2}|z'|^2-\beta(y^2-y^{2\gamma})-\frac{\theta y^{1+\gamma}}{|\mathbf{x}|^{1-\gamma}},
    \]
    where abusing the notation, $\mathbf{x}'=\mathbf{x}_{x'}$ and $\mathbf{x}=\mathbf{x}_x$ are $g_x$-normal coordinates centered at $x'$ and $x$, respectively, and $z'=(\mathbf{x}',y)$.
    Then
    \[
        \left\{
            \begin{alignedat}{2}
                D_{g_x}|z'|^2&=(-y^{1-2\gamma}(\Delta_{g_x}+(1-2\gamma)y^{-1}\partial_y|z'|^2)+E_{g_x})|z'|^2\\
                D_{g_x}y^2&=-2(2-2\gamma)y^{1-2\gamma}+E_{g_x}y^2\\
                D_{g_x}y^{2\gamma}&=E_gy^{2\gamma},\\
                D_{g_x}\frac{y^{1+\gamma}}{|\mathbf{x}|^{1-\gamma}}&=-\frac{1-\gamma^2}{y^\gamma|\mathbf{x}|^{1-\gamma}}-y^{2-\gamma}\Delta_{g_x}|\mathbf{x}|^{\gamma-1}+E_{g_x}\frac{y^{1+\gamma}}{|\mathbf{x}|^{1-\gamma}}.
            \end{alignedat}
        \right.
    \]
    Hence
    \begin{equation}\label{eq: expansion of Db}
        \begin{split}
            &D_{g_x}b\\
            &=\left(\frac{\alpha}{2}(-\Delta_{g_x}-(1-2\gamma)y^{-1}\partial_y)|z'|^2+2(2-2\gamma)\beta \right)y^{1-2\gamma}+\theta\left(\frac{1-\gamma^2}{y^\gamma|\mathbf{x}|^{1-\gamma}}+y^{2-\gamma}\Delta_{g_x}|\mathbf{x}|^{\gamma-1}\right)\\
            &\hspace{4mm}+E_{g_x}\left(\frac{\alpha}{2}|z'|^2-\beta y^2+\beta y^{2\gamma}-\theta\frac{y^{1+\gamma}}{|\mathbf{x}|^{1-\gamma}}\right)\\
            &\geq \theta\left(\frac{1-\gamma^2}{y^\gamma|\mathbf{x}|^{1-\gamma}}+y^{2-\gamma}\Delta_{g_x}|\mathbf{x}|^{\gamma-1}\right)+E_{g_x}\left(\frac{\alpha}{2}|z'|^2-\beta y^2+\beta y^{2\gamma}-\theta\frac{y^{1+\gamma}}{|\mathbf{x}|^{1-\gamma}}\right),
        \end{split}
    \end{equation}
    provided we take 
    \[
    \beta\geq \frac{\sup_{z'\in B_1^{g_+,+}(x)}\|(\Delta_{g_x}+(1-2\gamma)y^{-1}\partial_y)|z'|^2\|_{L^\infty (B_1^{g_+,+}(x))}}{8(1-\gamma)}\alpha.
    \]
    The coefficient on the right hand side is finite (because $M$ is locally flat), and only depends on $n,\gamma,X,g_+$.

    Additionally, there exists $c=c(n,\gamma,X,g_+)>0$, such that if $y<c|\mathbf{x}|$, then
    \begin{equation}\label{eq: lower cone condition}
        y^{2-\gamma}\Delta_{g_x}|\mathbf{x}|^{\gamma-1}<\frac{1-\gamma^2}{2y^\gamma|\mathbf{x}|^{1-\gamma}}.
    \end{equation}
    Moreover, from \eqref{eq: Egx approximation} and \eqref{eq: decying order in Egx from zero mean curvature},
    \[
        |E_{g_x}|\lesssim_{n,\gamma,X,g_+} y^{-\gamma},
    \]
    and thus
    \begin{equation}\label{eq: Egx term estimate}
        E_{g_x}\left(\frac{\alpha}{2}|\mathbf{x}|^2-\beta y^2+\beta y^{2\gamma}-\theta\frac{y^{1+\gamma}}{|\mathbf{x}|^{1-\gamma}}\right)\geq -\frac{\theta(1-\gamma^2)}{4y^\gamma |\mathbf{x}|^{1-\gamma}},\quad\textrm{in}\quad B_{\tilde{\delta}}^{g_+,+}(x),
    \end{equation}
    for some small 
    \[
    \tilde{\delta}\lesssim_{n,\gamma}\min\{\alpha^{-\frac{1}{3-\gamma}},\beta^{-\frac{1}{1+\gamma}}\}\sim_{n,\gamma,X,g_+}\min\{\alpha^{-\frac{1}{3-\gamma}},\alpha^{-\frac{1}{1+\gamma}}\}.
    \]
    (note that we have chosen $\beta\sim_{n,\gamma,X,g_+}\alpha$). Combine \eqref{eq: expansion of Db}, \eqref{eq: lower cone condition}, and \eqref{eq: Egx term estimate}, we conclude
    \begin{equation}\label{eq: supersolution condition, D bound}
        D_{g_x}b>\frac{\theta (1-\gamma^2)}{4y^\gamma|\mathbf{x}|^{1-\gamma}}\geq \frac{\theta (1-\gamma^2)}{4y^\gamma|z|^{1-\gamma}}>D_{g_x}U,\quad\text{in}\quad B_{\tilde{\delta}}^{g_+,+}(x)\cap\{y<c|\mathbf{x}|\},
    \end{equation}
    by taking $\theta\sim_{n,\gamma} \|f\|_{L^\infty(B_1^{g_+,+}(x))}$. 
    
    Now, take $c$ further small so that $c<\min\left\{\left(\frac{\alpha}{4\theta}\right)^{\frac{1}{1+\gamma}},1\right\}$, and then choose 
    \[
    \alpha\gtrsim_{\gamma}\max\left\{\frac{\theta}{d_{g_x}^{1-\gamma^2}(x,x')},\frac{\theta^{\frac{2}{1+\gamma}}\|U\|_{L^\infty(B_{cd_{g_x}(x,x')/2}^{g_+,+}(x'))}}{d_{g_x}^2(x,x')}\right\}.
    \]
    One can compute that with such choice, we can take balls in inside of conical region, i.e. $B_{cd_{g_x}(x,x')/2}^{g_+,+}(x')\subset \{y<c|\mathbf{x}|\}$, and 
    \begin{equation}\label{eq: Supersolution condition, Linfty bound}
            b(\mathbf{x},y)=\frac{\alpha}{2}|z'|^2-\beta(y^2-y^{2\gamma})-\frac{\theta y^{1+\gamma}}{|\mathbf{x}|^{1-\gamma}}>\|U\|_{L^\infty(B_{cd_{g_x}(x,x')/2}^{g_x}(x))},\quad\text{on}\quad \partial B_{cd_{g_x}(x,x')/2}^{g_+,+}(x').
    \end{equation}
    Indeed, with such choice of $\alpha$, we have
    \[
        \alpha\left(\alpha^{\frac{2}{1+\gamma}}-\frac{C_\gamma\theta^{\frac{2}{1+\gamma}}}{d_{g_x}^{2-2\gamma}(x,x')}\right)>\frac{C_\gamma \theta^{\frac{2}{1+\gamma}}\|U\|_{L^\infty (B_{cd_{g_x}(x,x')/2}^{g_x}(x))}}{d_{g_x}^2(x,x')},\quad\text{on}\quad \partial B_{cd_{g_x}(x,x')/2}^{g_+,+}(x')
    \]
    which implies that 
    \[
        \alpha >\left(2\|U\|_{L^\infty (B_{cd_{g_x}(x,x')/2}^{g_x}(x))}+\frac{2\theta\hat{\delta}^{1+\gamma}}{(d_{g_x}(x,x')-\hat{\delta})^{1-\gamma}}\right)\frac{1}{\hat{\delta}^2},\quad\text{on}\quad \partial B_{cd_{g_x}(x,x')/2}^{g_+,+}(x')
    \]
    where $\hat{\delta}:=\frac{c}{2}d_{g_x}(x,x')$, with $c\lesssim_{n,\gamma,X,g_+}\left(\frac{\alpha}{\theta}\right)^{\frac{1}{1+\gamma}}$, as before. From here \eqref{eq: Supersolution condition, Linfty bound} follows.
    
    Moreover, from the above choice of $\tilde{\delta}$, by taking
    \[ 
        d_{g_x}(x,x')\lesssim_{n,\gamma,X,g_+} \min\{\theta^{-\frac{1}{2-\gamma-\gamma^2}},\theta^{-\frac{2(3-\gamma)}{1-\gamma^2}}\|U\|_{L^\infty(B_1^{g_+,+}(x))}^{-\frac{1}{1-\gamma}}\},
    \]
    we can have
    \[
        \tilde{\delta}>2d_{g_x}(x,x').
    \]
    That is, $B_{cd_{g_x}(x,x')/2}^{g_+,+}(x')\subset B_{\tilde{\delta}}^{g_+,+}(x)$. Therefore, we choose
    \[
        \tilde{\delta}\sim_{n,\gamma,X,g_+}\min\{\|f\|_{L^\infty(B_1^{g_+,+}(x))}^{-\frac{1}{2-\gamma-\gamma^2}},\|f\|_{L^\infty(B_1^{g_+,+}(x))}^{-\frac{2(3-\gamma)}{1-\gamma^2}}\|U\|_{L^\infty(B_1^{g_+,+}(x))}^{-\frac{1}{1-\gamma}},1\}.
    \]
    
    Then, from \eqref{eq: supersolution condition, D bound} and \eqref{eq: Supersolution condition, Linfty bound}, it follows that $b$ is a supersolution to $U$, in $B_{cd_{g_x}(x,x')/2}^{g_+,+}(x')$. Moreover, the construction of the subsolution follows similarly. Thus by the maximum principle, for any $(\mathbf{x}',y)\in B_{cd_{g_x}(x,x')/2}^{g_+,+}(x')$,
    \[
        |U(\mathbf{x}',y)|\leq b(\mathbf{x}',y)\leq \frac{\alpha}{4}|\mathbf{x}'|^2+\beta y^{2\gamma}
    \]

    Now, apply the same argument for arbitrary $x'\in B_{\tilde{\delta}/2}^{g_+,+}(x)$, we conclude that
    \[
        |U(\mathbf{x},y)|\leq \beta y^{2\gamma}\sim_{n,\gamma,X,g_+}\alpha y^{2\gamma},\quad\forall (\mathbf{x},y)\in \{y<c|\mathbf{x}|\}\cap B_{\tilde{\delta}/2}^{g_+,+}(x).
    \]
    By the standard interior elliptic estimate and letting the limit $y\rightarrow 0$, we also obtain the gradient estimate. This completes the proof.
\end{proof}

\begin{remark}\label{rmk: inductive use of the estimate to remove singularity}
    The singular part in $\alpha$ is in $L_\text{loc}^{\frac{2n}{n+2\gamma}}(M)$ if $n+2\gamma>4$. However, one can apply the above lemma inductively to remove this restriction, for any $n\geq 2$ and $\gamma\in (0,1)$. To be more precise, for any $x'\in M$, define $D_0(x'):=B_{cd_{g_x}(x,x')/2}^{g_+,+}(x')$, and a set consist of balls centered in a points in $D_0(x')$ by
    \[
        D_1(x'):=\bigcup_{x_1\in D_0(x')}B^{g_+,+}_{cd_{g_x}(x,x_1)/2}(x_1).
    \]
    Then $D_1(x')$ is still contained in the cone $\{y<c|\mathbf{x}|\}$, and thus we can apply the pointwise estimate \eqref{eq: pointw+gradientn estimate with rhs} (if the maximum in the definition of $\alpha$ was achieved by the second factor) again, to have
    \begin{align*}
        &\|U\|_{L^\infty(D_0(x'))}\\
        &\lesssim_{n,\gamma,X,g_+}\left(\sup_{x_1\in D_0(x')}\alpha_{x_1} \right)(cd_{g_x}(x,x'))^{2\gamma}\\
        &\lesssim_{n,\gamma,X,g_+,\|f\|_{L^\infty(B_1^{g_+,+})(x)}}\frac{\|U\|_{L^\infty\left(D_1(x')\right)}}{\left(\inf_{x_1\in D_0(x')}d_{g_x}^2(x,x_1)\right)d_{g_x}^{-2\gamma}(x,x')}\\
        &\lesssim_{n,\gamma,X,g_+}\frac{\|U\|_{L^\infty(B_1^{g_+,+})}}{d_{g_x}^{2-2\gamma}(x,x')}.
    \end{align*}
    In the above, we are estimating the pointwise value of $U$ in the respective ball $D_1(x')$, by taking a collection of balls centered in each points in the first ball $D_0(x')$, which are still in the conical region $\{y<c|\mathbf{x}|\}$. As a result, we gained the integrability from $d_{g_x}^{-2}$ in \eqref{eq: def of alpha} to $d^{2\gamma-2}_{g_x}$, in sacrifice of the coefficient. One can repeat this procedure finitely many times by defining $D_k(x')$ in the inductive manner (and all $D_k(x')$ are still in the conical region $\{y<c|\mathbf{x}|\}$) to obtain $L^{\frac{2n}{n+2\gamma}}$ integrability for the right hand side of \eqref{eq: pointw+gradientn estimate with rhs}, for any $n\geq 2$ and $\gamma\in (0,1)$. Note that we cannot completely remove the singular denominator and obtain $L^\infty$ estimate in this method, because $\frac{\|f\|_{L^\infty (B_1^{g_+,+}(x))}}{d_{g_x}^{1-\gamma^2}(x,x')}$ in \eqref{eq: def of alpha} does not improve in this inductive step.
\end{remark}

\section*{Acknowledgement}

We thank Professor Simon Brendle for insightful discussions.
The first author also expresses gratitude to Professor Martin Mayer for the fruitful discussion and his suggestion on his paper, which ultimately led to the resolution of the problem.
The first and second authors have received funding from the Swiss National Science Foundation
under Grant PZ00P2\_202012/1. 
The third author was supported by   the National Science and Technology Council (NSTC),
Taiwan, with grant Number: 112-2115-M-032 -006 -MY2. The second author is grateful for the warm hospitality he received when visiting the Department of Applied Mathematics and Data Science, Tamkang University, where part of the work was completed.

\medskip

\bibliographystyle{abbrvnat}
\bibliography{Bib}

\end{document}